\documentclass[11pt]{article}
\usepackage[utf8]{inputenc}
\usepackage{setspace}
\usepackage[margin=1.0in]{geometry}
\usepackage{amssymb, amsmath}
\usepackage{amsthm}
\usepackage{colordvi,verbatim,hyperref}
\usepackage{color,enumerate}
\usepackage{graphicx}
\usepackage{caption}
\usepackage{algorithm,algorithmic}
\usepackage{subcaption}
\usepackage[justification=centering]{caption}
\usepackage{authblk}
\usepackage[toc,page]{appendix}
\usepackage{lineno}
\usepackage{rotating}
\usepackage{tablefootnote}
\usepackage{mathtools}
\usepackage{natbib}

\theoremstyle{definition}
\newtheorem{defn}{Definition}

\newtheorem{thrm}{Theorem}

\doublespacing\allowdisplaybreaks\begin{document}
\title{\textbf{Multi-Period Max Flow Network Interdiction with Restructuring for Disrupting Domestic Sex Trafficking Networks}\footnotetext[0]{Email addresses: d.kosmas@northeastern.edu (Daniel Kosmas), tcshark@clemson.edu (Thomas C. Sharkey), mitchj@rpi.edu (John E. Mitchell), k.maass@northeastern.edu (Kayse Lee Maass), mart2114@umn.edu (Lauren Martin)}}
\author[a]{Daniel Kosmas\footnote{Corresponding author (d.kosmas@northeastern.edu).}}
\author[b]{Thomas C. Sharkey} 
\author[c]{John E. Mitchell}
\author[a]{Kayse Lee Maass}
\author[d]{Lauren Martin}
\affil[a]{\footnotesize Department of Mechanical and Industrial Engineering, Northeastern University, 360 Huntington Avenue, 334 SN, Boston, MA 02115, USA}
\affil[b]{\footnotesize Department of Industrial Engineering, Clemson University, Freeman Hall, Clemson University
Clemson, SC 29634-0920, USA}
\affil[c]{\footnotesize Department of Mathematics, Rensselaer Polytechnic Institute, 110 8th Street, Troy, NY 12180, USA}
\affil[d]{\footnotesize School of Nursing, University of Minnesota, 308 SE Harvard St, Minneapolis, MN 55455, USA}
\date{}
\maketitle
\vspace{-1.5cm}
\begin{abstract}
    We consider a new class of multi-period network interdiction problems, where interdiction and restructuring decisions are decided upon before the network is operated and implemented throughout the time horizon. We discuss how we apply this new problem to disrupting domestic sex trafficking networks, and introduce a variant where a second cooperating attacker has the ability to interdict victims and prevent the recruitment of prospective victims. This problem is modeled as a bilevel mixed integer linear program (BMILP), and is solved using column-and-constraint generation with partial information. We also simplify the BMILP when all interdictions are implemented before the network is operated. Modeling-based augmentations are proposed to significantly improve the solution time in a majority of instances tested. We apply our method to synthetic domestic sex trafficking networks, and discuss policy implications from our model. In particular, we show how preventing the recruitment of prospective victims may be as essential to disrupting sex trafficking as interdicting existing participants.
    \newline Keywords: network interdiction, multi-period, sex trafficking, illicit networks
\end{abstract}

\section{Introduction}
\label{sec:intro}
Human trafficking is an egregious violation of human rights and dignity. The International Labour Organization estimated \$150 billion in annual profits from human trafficking \citep{ilo2014}, but the real impact is not well understood \citep{fedina2015use}. Victims of human trafficking are recruited through violence and force, fraudulent job opportunities, fake romantic interest, manipulation, offers of safe migration, or by traffickers exploiting their lack of access to basic needs. Traffickers compel work through strong arm tactics such as extreme physical and sexual violence, threats of unmanageable debts or quotas, confiscation of critical documents, and social isolation and confinement \citep{polaris, carpenter2016nature, martin2014mapping, preble2019under}. Traffickers also groom their victims into believing that performing the tasks asked of them will solidify a relationship with the trafficker \citep{cockbain2018offender}. This work, including physical labor, peddling and begging, transactional sex, and other illicit activities, would not be performed by the victim without force, fraud, or coercion \citep{polaris}. We focus on the specific case of sex trafficking, which is part of a complex and stigmatized commercial market \citep{dank2014estimating,marcus2016pimping,martin2017mapping}.

\cite{konrad2017overcoming} identified that current efforts to combat human trafficking can be supplemented by techniques from operations research. However, due to significant challenges in data collection in relation to sex trafficking \citep{fedina2019risk, gerassi2017design, weitzer2014new}, information about networks within sex trafficking operations that is needed to inform these techniques is limited. Thus the potential impacts of network disruptions are even less well-known. Qualitative studies have yielded some relevant insights. For example, sex trafficking operations have wide variation in level of complexity, the degree of hierarchy, and role of actors \citep{cockbain2018offender}. Individuals often move between networks and change roles over time \citep{cockbain2018offender, denton2016anatomy, martin2014benefit, martin2014mapping}. Thus, victims can move up in the hierarchy to escape violence or they can exit the network through escape, intervention (e.g. law enforcement or social services), or moving to a different network. The amount and frequency of ``turnover” of victims and the response of trafficking networks is not yet well understood \citep{caulkins2019call, konrad2017overcoming, martin2014benefit}. Pre-existing relationships among network members seem to be an important factor in establishing trust and maintaining a network \citep{cockbain2018offender}. Connections between legal and criminalized commercial sex, as well as between formal and informal networks, add complexity to how networks function \citep{cockbain2018offender, dewey2018routledge}. Social problems such as poverty, running away from home, homelessness and addictions make some people more vulnerable to being trafficked for sexual exploitation \citep{fedina2019risk, franchino2021vulnerabilities, ulloa2016prevalence}. These nuanced market and social factors shape how sex trafficking networks function, including how they recruit and retain victims \citep{cockbain2018offender, dewey2018routledge}. Modeling of networks and potential disruptions must account for this nuance and complexity. 

Policing disruptions in the US have been primarily directed toward arrest and prosecution of traffickers and identification of victims \citep{farrell2020measuring}, with limitations and uneven application \citep{farrell2015police}. Social service and healthcare responses have focused on finding victims and referring them to supportive services such as: housing, therapy, addiction treatment, mental health support, and job training \citep{hounmenou2019review, macy2021scoping, roby2017federal}.\cite{moynihan2018interventions} identified a variety of different (non-law enforcement) intervention strategies that promote the well-being of victims, such as focused health and/or social services and residential programs. While necessary to remediate the short and long-term harms to victims of sex trafficking, these approaches do not get ahead of trafficking before it happens and we lack data on their impact. \cite{franchino2021vulnerabilities} identified common risk factors that lead to victims being trafficked, including, but not limited to homelessness, negative mental health and an early introduction to drugs and alcohol. While rigorous evaluation studies of the impact of social services on vulnerabilities to trafficking are lacking, we hypothesize that providing resources to address these risk factors will help prevent victimization.

It is known that traffickers will adapt to anti-trafficking activities in order to minimize detection and maximize profits \citep{surtees2008traffickers}.  Effectively combating human trafficking will require the coordination of multiple different organizations with different intervention strategies. However, there are tensions between organizations that have limited the effectiveness of these coordination efforts \citep{foot2015collaborating}. Addressing these tensions can lead to more successful anti-trafficking efforts. \cite{foot2021outcome} explored how counter-human trafficking coalitions can lead to more positive outcomes of their efforts.  \cite{pajon2022importance} proposed that law enforcement needs to collaborate with other agencies, such as those that are able to ensure the safeguarding of victims, in order to more effectively investigate and prosecute traffickers. In this paper, we model how cooperating anti-trafficking stakeholders can more effectively disrupt human trafficking and assess the impact of this cooperation.

Because trafficking operations rely on the ability to obtain and control victims in order to generate profit, it is important to model recruitment dynamics. Research suggests that removing individual victims from a trafficking situation, while clearly necessary, might paradoxically result in more victims being recruited into trafficking after a disruption \citep{caulkins2019call,martin2014benefit}. Theorizing how networks may restructure after a disruption, specifically the removal of a victim from a trafficking situation, is necessary to inform the field about effective disruption. Our work here helps to mathematically understand the limitations of victim-level disruptions related to recruitment on the overall prevalence of human trafficking. Although, ethics dictate that we must continue to provide exit options for victims of trafficking and our results suggest a combination of exit options and preventative measures to recruitment are necessary for organizations whose goals are to disrupt trafficking in the long-term. 

Applying tools from operations research to disrupting human trafficking has been a focus of recent research \citep{caulkins2019call, dimas2021survey, konrad2017overcoming}. \cite{smith2020survey} suggests that network interdiction may prove useful in supporting anti-trafficking efforts. Network interdiction is a two player Stackelberg \citep{stackelberg1952theory} game that is commonly used to model adversarial scenarios involving networks. One player, the defender, seeks to operate the network to the best of their ability (shortest path, maximum flow, etc.). The other player, the attacker, tries to inhibit the defender's ability to operate the network by removing nodes or arcs, subject to certain constraints, before the defender has the ability to operate the network. In this work, we focus on max flow network interdiction, which has previously been applied to telecommunications \citep{baycik2018interdicting}, electrical power \citep{salmeron2009worst}, transportation \citep{alderson2011solving}, and illicit drug trafficking \citep{malaviya2012multi}. It has recently begun to be applied to disrupting human trafficking \citep{kosmas2022generating, mayorga2019countering, xie2022interdependent}. In applying max flow to human trafficking, the practical interpretation of flow depends on the scale of the trafficking operation. We consider domestic sex trafficking networks, where the flow has been interpreted as the ability of the traffickers to control and/or coerce their victims \citep{kosmas2022generating}. Interdictions for a domestic sex trafficking network can be practically interpreted as actions such as a trafficker being arrested by law enforcement or agencies providing a victim a service to address one of their vulnerabilities, such as access to affordable housing and health care. 

In this work, we introduce the multi-period max flow network interdiction problem with restructuring (MP-MFNIP-R), extending the work of \cite{kosmas2020interdicting} to include a temporal component. In this problem, interdiction and restructuring decisions are decided upon upfront and implemented throughout the time horizon. We discuss how we extend the work of \cite{kosmas2022generating} so that constraints on interdictions and restructuring for domestic sex trafficking networks can be formulated for a multi-period model. We additionally propose a variant with two cooperating attackers with different abilities to interdict the network, modeling how multiple anti-trafficking stakeholders would cooperate. We formulate MP-MFNIP-R as a bilevel mixed integer linear program (BMILP), which can be solved using column-and-constraint generation.  We then derive a column-and-constraint generation (C\&CG) algorithm to solve the BMILP, and propose augmentations to the C\&CG algorithm based on modeling choices for domestic sex trafficking networks. We test our model on validated synthetic domestic sex trafficking networks that are grounded in real-world experience \citep{kosmas2022generating}. These tests demonstrate the efficacy of our augmentations, as well as how recommended interdiction prescriptions change based on different modeling choices.

\subsection{Literature Review}
\cite{wood1993deterministic} originally proposed max flow network interdiction, and there have been many extensions proposed since the original work. \cite{derbes1997efficiently} was the first work to consider incorporating a temporal component in max flow network interdiction.  \cite{rad2013maximum} also considered a multi-period max flow network interdiction model where each arc has a traversal time for flow to travel across it, and derive a Benders' decomposition algorithm based on temporally repeated flows to solve this problem. \cite{zheng2012stochastic} proposed a stochastic version of the multi-period max flow network interdiction model, where the attacker has incomplete information on the network structure. \cite{soleimani2017solving} considered a multi-period interdiction model where flow is sent from source to sink instantaneously, and solve the problem with generalized Benders' decomposition. They also expand this model to include uncertainty in the arc capacities \citep{soleimani2018dynamic}. \cite{malaviya2012multi} and \cite{jabarzare2020dynamic} applied multi-period max flow network interdiction models to illicit drug trafficking networks. In all of these works, the network remains static, and is not allowed to change after interdictions have been implemented. Specialized methods proposed by these works, such as the Benders' decomposition based on temporally repeated flows, are no longer applicable if the underlying network changes in different time periods.

Understanding how networks ``react" to interdiction has been identified as a key feature necessary to applying network interdiction models to disrupting human trafficking \citep{caulkins2019call, konrad2017overcoming}. However, it has received little attention, since incorporating the ability to change the network after interdictions have been implemented proves to be computationally difficult even in a single time period. \cite{sefair2016dynamic} proposed a dynamic version of the shortest path interdiction problem, where the attacker and defender alternate between the attacker interdicting the network and the defender traversing an arc. \cite{holzmann2019shortest} introduced the shortest path interdiction problem with improvement (SPIP-I), where the defender has a limited budget to reduce the cost of traveling along certain arcs after the attacker has interdicted the network. \cite{kosmas2020interdicting} introduced the max flow network interdiction problem with restructuring (MFNIP-R), where the defender has a limited budget to add arcs to the network in response to the implemented interdictions. The works for \cite{holzmann2019shortest} and \cite{kosmas2020interdicting} do not include a temporal component.

Applying tools from operations research (OR) to disrupting human trafficking has been receiving more attention over the last few years. \cite{konrad2017overcoming} was among the first works suggesting how the OR and analytics community could support anti-human trafficking efforts. They suggest network interdiction may prove useful in combating human trafficking and note that there were modeling nuances that need to first be addressed, such as `the ability to accommodate dynamic changes' and that `trafficked humans are a ``renewable commodity."' \cite{caulkins2019call} additionally suggested that intervention strategies must also account for unintended consequences, highlighting an example of an intervention in seafood supply chains that use labor trafficking also could harm the legal industry. \cite{dimas2021survey} reviewed literature in OR and analytics for human trafficking that was published between 2010 and March 2021. They identified that the majority of the published works in this time period focused on machine learning classification/clustering methods. They also noted that many works in OR and analytics for human trafficking are broadly focused, and this broad focus can potentially lead to models not appropriately accounting for unique nuances for specific populations of victims and survivors. \cite{sharkey2021better} stated that combating human trafficking is a transdisciplinary challenge, and that, to ensure that the models developed by the OR and analytics communities are appropriately accounting for these unique nuances, researchers should employ a transdisciplinary research approach. By collaborating with subject-matter experts, both in and out of academia, models will be better developed to account for these nuances. \cite{martin2022learning} demonstrated the process in which they built a transdisciplinary research team to address sex trafficking and recommended that effective team-building was the key to establishing the respect and trust needed to develop a shared language across a diverse set of disciplines.

Two perspectives are currently being considered when developing network interdiction models for disrupting human trafficking: macroscopic and microscopic. Macroscopic models seek to disrupt the movement of trafficking victims from their origin location to where demand is, while microscopic models seek to disrupt the exploitation of trafficked individuals when they are at their destination. These differing perspectives support each other by helping address the limitations of the other perspective. Macroscopic models are currently limited by failing to include how victims are exploited after they reach their destination, which is the primary focus of microscopic models. In turn, microscopic models are limited by not fully accounting for how victims are moved within trafficking networks, which is the primary focus of macroscopic models.

To the best of our knowledge, three works explore the macroscopic perspective. \cite{mayorga2019countering} applied a network interdiction model to networks where victims were moved between illicit massage parlors in a geographic area. \cite{tezcan2020human} explored a multi-period network interdiction model where the probability of an interdiction being successful was dependent on the success of previous interdictions, and applied this model to human trafficking across the Nepal-India border. \cite{xie2022interdependent} proposed a multi-period interdependent network interdiction model on the sex trafficking supply chain, where flow needs to pass through the communication network before victims can be moved through the physical network. \cite{mayorga2019countering} and  \cite{xie2022interdependent} considered the flow through the network to be the victims themselves, whereas \cite{tezcan2020human} considered the flow to be the desirability of a trafficker to travel across an an arc. None of these works allow for the underlying network to change after interdictions. \cite{kosmas2022generating} is currently the only work investigating the microscopic perspective. They applied a network interdiction model with restructuring to domestic sex trafficking networks. In their work, they consider the flow through the network to be the ability of a trafficker to control their victims. We expand upon their work by extending the model they used to include a temporal component. This extension will allow policy-makers to better understand the long-term impacts, both intended and unintended, of their proposed anti-trafficking efforts to prevent exploitation.

To solve their model, \cite{kosmas2022generating} implemented a column-and-constraint generation algorithm. Column-and-constraint generation was originally proposed by \cite{zeng2014solving} to solve bilevel mixed integer linear programs that satisfy the relatively complete response property, meaning that for every feasible integer upper level and integer lower level solution, there is a feasible continuous upper level and continuous lower level solution (i.e., there is no pair of upper level and lower level integer solutions that make the bilevel problem infeasible). \cite{yue2019projection} expanded C\&CG to solve general BMILPs by incorporating implications constraints to remove lower level integer decisions that are infeasible with respect to the upper level integer decision that is being considered in the branch-and-bound procedure.  \cite{kosmas2020interdicting} adapted this procedure for network interdiction models with restructuring, where the restructuring decisions are monotonic with respect to the interdiction decisions. They do so by instead incorporating partial information from previously visited restructuring plans, identifying which components of the restructuring plans remain feasible as the interdiction decisions being considered in the branch-and-bound procedure change. \cite{kosmas2020interdicting} showed that this adaptation is necessary to solve models where new participants could be recruited into the illicit network. We apply the algorithm of \cite{kosmas2020interdicting} to solve the multi-period version of their problem, as well as suggest algorithmic augmentations that lead to significant computational gains based on modeling choices for disrupting domestic sex trafficking networks.

We summarize the key differences between the reviewed literature and this work. Previous max flow network interdictions models that include a temporal component only allow for the defender to respond to interdictions by shifting how flow is sent through the network. Our work expands on the capabilities of the defender by also allowing them to add arcs to the network over time. Prior network interdiction models that allow for the defender to add arcs to the network have focused on shortest path interdiction, not max flow interdiction. This work additionally distinguishes itself from prior work on applying network interdiction models to disrupting sex trafficking by including a temporal component in the model where flow is modeled as control. The network interdiction model we propose accounts for two of the modeling nuances identified by \cite{konrad2017overcoming}, dynamic changes and the ``re-usability" (over time) of trafficking victims, which no prior work has fully explored.

\subsection{Paper Organization}
This paper is organized as follows: Section \ref{sec:prob} formally introduces the multi-period max flow network interdiction problem with restructuring and proposes a bilevel mixed integer linear programming formulation of the problem. Section \ref{sec:modelInt} discusses modeling choices regarding interdiction decisions and Section \ref{sec:modelRest} discusses modeling choices regarding restructuring decisions for disrupting domestic sex trafficking networks. Section \ref{sec:modelderiv} derives an equivalent linear program that can be solved by column-and-constraint generation. Section \ref{sec:aug} proposes modeling-based augmentations to improve the solution time of the column-and-constraint generation procedure. Section \ref{sec:results} presents results on validated synthetic domestic sex trafficking networks, both comparing the quality of solution times with the proposed augmentations and discussing policy recommendations provided by our model. Section \ref{sec:conc} concludes the paper and discusses avenues for future research.

\section{Problem Description}
\label{sec:prob}
We first review how the model of  \cite{kosmas2022generating} is constructed. The networks considered are each trafficker's operation, as as well as the social network between traffickers, which helps to capture potential reactions after interdictions. The node set $N$ is partitioned into different sets, based on the role the node plays in the network. They first consider the roles of participants currently active in the network. These are traffickers, victims, and bottoms. A bottom is a victim who assists the trafficker in managing the trafficking operation as part of the activities they are forced to perform \citep{belles2018defining}. Bottoms are typically viewed as the most trusted or highest earning victim \citep{roe2015sexual}. Let $T$ be the set of traffickers, $B$ be the set of bottoms, and $V$ be the set of victims. Additionally, they consider participants who can be brought into the network, or those who can have their roles change. For example, if a trafficker is interdicted, one of their friends or family members may be able to take over the operations of the trafficking network \citep{dank2014estimating}. Alternatively, if the bottom is interdicted, the trafficker may promote another victim to take over the responsibilities of the previous bottom. Let $T^R$ be the set of back-up traffickers, $B^R$ be the set of victims that can be promoted to be a bottom, and $V^R$ be the set of prospective victims. We summarize all notation in Appendix \ref{note}.

In \cite{kosmas2022generating}, the traffickers operate the networks (typically referred to as the \emph{defender} in an interdiction problem), and the anti-trafficking stakeholder is trying to interdict the network (typically referred to as the \emph{attacker}). Both players have complete information about the game, having full knowledge about the network and each other's decisions and objectives. Each trafficker has limited ability to coerce their victims and acquire new victims. In their model, they consider the flow through the trafficking network to be the ability of a trafficker (or bottom) to coerce a victim into providing labor. Their model interdicts nodes instead of arcs, representing the removal of participants from the networks. After interdictions, they allow arcs from a set $A^R$ to be added to the network. The addition of these arcs is referred to as \emph{restructuring}. These arcs belong to two different sets $A^{R, out}$ and $A^{R, in}$, based on the role of the participants that is allowed to initiate that restructuring. As described in \cite{kosmas2022generating}, an ``out" restructuring (an arc belonging to $A^{R,out}$) is initiated by the trafficker. Examples of this include arcs that model a trafficker restructuring to a new victim after one of their victims has been interdicted or a trafficker assigning one of their victims to their bottom. An ``in" restructuring (an arc belonging to $A^{R,in}$) is initiated by the victim, such as a victim being recruited into a new operation after their trafficker has been interdicted. We will describe which belong to $A^{R,out}$ and $A^{R, in}$ in Section \ref{sec:modelRest}. In their work, $A \cap A^R = \emptyset$.

We now introduce the multi-period max flow network interdiction problem with restructuring (MP-MFNIP-R). MP-MFNIP-R is a two player game on a network $G = (N,A)$, with $N$ being the set of nodes and $A$ being the set of arcs currently in the network, and $A^R = A^{R, out} \cup A^{R, in}$ representing the set of restructurable arcs. Nodes and arcs (including restructurable arcs) are assigned capacities $u: N \cup A \cup A^R \rightarrow \mathbb{R}_+$, and a victim node $i$ that is promoted to be the new bottom will have their capacity increased by $\tilde{u}_i$. We denote that $\alpha \in N$ is the source node, and $\omega \in N$ is the sink node. Let $\tau$ be the number of time periods.

Gameplay for MP-MFNIP-R is as follows. First, the attacker decides upon an interdiction plan. Interdictions are decided upon at the beginning of the time horizon, and require a certain length of time before they are implemented, setting the capacity of the interdicted node to $0$. After the interdiction plan is decided upon, the defender decides upon a restructuring plan in response to the attacker's interdiction plan. Restructuring decisions are also decided upon at the beginning of the time horizon, and require a certain length of time before they are implemented, setting the capacity of each restructured arc to its non-zero capacity. After the interdiction and restructuring decisions are made, the defender operates the network, sending flow from source to sink instantaneously each time period. The goal of the defender is to maximize the amount of flow sent throughout the entire time horizon, and the goal of the attacker is to minimize the amount of flow sent throughout the entire time horizon.

To describe the mathematical program, we must first define the decision variables. Let $x_{it}$ be the flow across node $i$ at time $t$ for $i \in N$ and $t = 1, \ldots, \tau$, and $x_{ijt}$ be the flow across arc $(i,j)$ at time $t$ for $(i,j) \in A \cup A^R$ and $t = 1,\ldots, \tau$. Let

\small\begin{equation*}
    y_{i} = \begin{cases} 1 \text{ if node } i \text{ has been interdicted,}\\
    0 \text{ otherwise,}
    \end{cases}
\end{equation*}

and let 

\small\begin{equation*}
    \gamma_{it} = \begin{cases} 1 \text{ if node } i \text{ is interdicted in or before time period } t \text{, }\\
    0 \text{ otherwise.}
     \end{cases}
\end{equation*}

Additionally, let

\small\begin{equation*}
    z^{out}_{ij} = \begin{cases} 1 \text{ if arc } (i,j) \text{ has been restructured when } i \text{ is able to initiate the restructuring,}\\
    0 \text{ otherwise,}
     \end{cases}
\end{equation*}

and

\small\begin{equation*}
    z^{in}_{ij} = \begin{cases} 1 \text{ if arc } (i,j) \text{ has been restructured when } j \text{ is able to initiate the restructuring,}\\
    0 \text{ otherwise.}
     \end{cases}
\end{equation*}
Let 
\small\begin{equation*}
    \zeta^{out}_{ijt} = \begin{cases} 1 & \begin{aligned}
        \text{if arc } (i,j) \text{ has been restructured in or before time period } t \\\text{ when } i \text{ is able to initiate the restructuring,}
    \end{aligned}\\
    0 & \text{otherwise,}
     \end{cases}
\end{equation*}
and
\small\begin{equation*}
    \zeta^{in}_{ijt} = \begin{cases} 1 & \begin{aligned}
        \text{if arc } (i,j) \text{ has been restructured in or before time period } t \\\text{ when } j \text{ is able to initiate the restructuring,}
    \end{aligned}\\
    0 &\text{otherwise.}
     \end{cases}
\end{equation*}

We now define relevant parameters independent of the application to domestic sex trafficking. Let $\delta_{i}^y$ be the number of time periods needed before node $i$ can be interdicted. Let $\delta_{ij}^z$ be the number of time periods needed before arc $(i,j)$ can be restructured. Let $Y$ be the set of all feasible interdiction decisions, and for each $y \in Y$, let $Z(y)$ be the set of all feasible restructuring decisions responding to interdiction plan $y$. The constraints defining $Y$ will be further defined in Section \ref{sec:modelInt}, and the constraints defining $Z(y)$ will be further defined in Section \ref{sec:modelRest}.

We can now describe the bilevel programming formulation of MP-MFNIP-R.
\begin{singlespace}
\begin{subequations}
\label{minmax1}
\footnotesize\begin{align} 
    \min_{y, \gamma} \max_{x, z, \zeta} ~~~ & \sum_{t = 1}^{\tau} \sum_{i \in N: (\alpha,i) \in A \cup A^{R,out}} x_{\alpha it} & \nonumber\\
    \text{s.t. }& \sum_{(h,i) \in A \cup A^{R,out}} x_{hit} = x_{it} & \text{ for } i \in N \setminus \alpha, t = 1,\ldots,\tau \label{con1:inflow}\\
    & x_{it} = \sum_{(i,h) \in A \cup A^{R,out}} x_{iht} & \text{ for } i \in N \setminus \omega, t = 1,\ldots,\tau \label{con1:outflow}\\
    & 0 \le  x_{ijt} \le u_{ij} & \text{ for } (i,j) \in A, t = 1,\ldots,\tau \label{con1:arccap}\\
    & 0 \le  x_{ijt} \le u_{ij} \zeta^{out}_{ij} & \text{ for } (i,j) \in A^{R,out} \setminus A^{R,in}, t = 1,\ldots,\tau \label{con1:arcrescap1}\\
    & 0 \le  x_{ijt} \le u_{ij} (\zeta^{out}_{ijt}+\zeta^{in}_{ijt}) & \text{ for } (i,j) \in A^{R,in}, t = 1,\ldots,\tau \label{con1:arcrescap2}\\
    & 0 \le  x_{it} \le u_i(1 - \gamma_{it}) & \text{ for } i \in N\setminus\{j \in V: \exists h \in B \text{ with } (h,j) \in B^R \}, t = 1,\ldots,\tau \label{con1:nodecap1}\\
    & 0 \le  x_{jt} \le u_j(1 - \gamma_{jt}) + \Tilde{u_j} \zeta^{out}_{\alpha jt} & \text{ for } j \in V \text{s.t.} \exists i \in B, (i,j) \in B^R, t = 1,\ldots,\tau \label{con1:nodecap2}\\
    & (y, \gamma) \in Y  \\
    & (z, \zeta) \in Z(y)
\end{align}
\end{subequations}
\end{singlespace}

The objective function of \eqref{minmax1} is the sum of the flows out of the source node across all time periods. Constraints \eqref{con1:inflow} - \eqref{con1:outflow} are flow balance constraints. Constraints \eqref{con1:arccap} - \eqref{con1:arcrescap2} are the capacity constraints on the arcs and restructurable arcs, and constraints \eqref{con1:nodecap1} - \eqref{con1:nodecap2} are the capacity constraints on the nodes. It is worth noting that this model builds off the work done in \cite{kosmas2020interdicting} by adding the time dimension. We additionally note that traditional max flow models only have a single flow balance constraint for each node. However, when interdicting nodes instead of arcs, it is necessary to have the pair of constraints. An equivalence between this model and an interdiction model where arcs are interdicted is established in \cite{malaviya2012multi}.

\section{Modeling Interdictions}
\label{sec:modelInt}
We now describe constraints regarding interdiction, based on interdicting domestic sex trafficking networks. We first describe constraints linking $y$ and $\gamma$.

\begin{align}
    \gamma_{it} = 0 & \text{ for } i \in N \setminus \{\alpha,\omega\}, t \in 1, \ldots, \delta^y_i \label{con:noInt}\\
    \gamma_{it} = y_i & \text{ for } i \in N \setminus \{\alpha,\omega\}, t \in (\delta^y_i + 1), \ldots, \tau \label{con:yesInt}
\end{align}

Constraints \eqref{con:noInt} indicate that a node $i$ will be able to carry flow for the first $\delta^y_i$ time periods, regardless of interdiction decisions. Then, after $\delta^y_i$ time periods have elapsed, constraints \eqref{con:yesInt} enforce that the node will be unable to carry flow if it was interdicted. 

We additionally include a budget constraint to limit the overall number of interdictions implemented. Let $r_i$ be the cost to interdict node $i$ and let $b$ be the total budget of the attacker. We use the same budget constraints described in \cite{kosmas2022generating}, where the cost to interdict a trafficker can be decreased based on interdicting their bottom (if the trafficker has one) and victims. This models how, if victims or bottom are willing to cooperate with law enforcement, it is easier for law enforcement to build a successful case \citep{clawson2008prosecuting, david2008trafficking}. To represent this, we define additional variables $\tilde{r}_i$ to be the adjusted cost of interdicting trafficker $i$.  Additionally, for each trafficker $i \in T$, let $r_i^{min}$ be the minimum cost of interdicting trafficker $i$ after interdicting their victims, and let $d_{il}$ be the reduction in cost of interdicting trafficker $i$ if victim (or bottom) $l$ is also interdicted. We assume the discount in cost to interdict the trafficker is additive until the cost would be reduced $r_i^{min}$, in which case the cost remains at $r_i^{min}$. The following constraints capture the interdiction budget and costs to interdict each node.

\begin{align}
    &\sum_{i \in T} \Tilde{r}_i y_i + \sum_{j \in B \cup N} r_i y_i \le b & \label{con:budget}\\
    &\Tilde{r}_i \ge r_i - \sum_{l \in B \cup V} d_{il} y_i & \text{ for } i \in T
    \label{con:adjcost}\\
    &\Tilde{r}_i \ge r_i^{min} & \text{ for } i \in T \label{con:mincost}
\end{align}

Constraint \eqref{con:budget} enforces that the chosen interdictions respect the overall attacker budget. Constraints \eqref{con:adjcost}-\eqref{con:mincost} compute the adjusted cost to interdict a trafficker.

\subsection{Modeling Extension: Cooperating Attackers}
There are multiple stakeholders in the efforts to disrupt human trafficking, which have a variety of different means of disrupting the network. For example, law enforcement has the ability to arrest and prosecute participants in the trafficking network, while social service professionals have the ability to provide services to victims to help them leave the trafficking network while also helping reduce the vulnerabilities of prospective victims.

We want to model a second interdictor that only has the ability to interdict victims and prevent the ability to add prospective victims to the network. Let $y'_i$ be the indicator of whether or not the second attacker interdicts node $i$ for $i \in V \cup V^R$. Let $r'_i$ be the cost for the second interdictor to interdict node $i$, and let $b'$ be the budget of the second attacker. We include the following constraint regarding the second attacker:

\begin{align}
    \sum_{i \in V \cup V^R} r'_i y'_i \le b' \label{con:2ndBudget}
\end{align}

To update the model to incorporate the second attacker, we must update the existing constraints regarding interdicting victims, as well as include constraint \eqref{con:2ndBudget}. For $i \in V$, we adjust the constraints \eqref{con:yesInt} from $\gamma_{it} = y_i$ to $\gamma_{it} = y_i + y'_i$ for $t \in (\delta^y_i+1), \ldots, \tau$. For $i \in V^R$, we introduce the constraints $\gamma_{it} = y'_i$ for $t = 1, \ldots, \tau$. If a prospective victim $j$ were to be interdicted, then even if an arc $(i,j)$ were to be restructured from trafficker $i$, the capacity of the prospective victim node would be set to $0$ for $t = \delta^y_j, \ldots, \tau$, and thus would not increase the objective value in time periods after the victim is interdicted. We note that this formulation assumes that the two attackers are cooperating and fully aware of each other's actions, which may not be true in reality \citep{foot2015collaborating}. However, there has been little quantitative work in exploring cooperating attackers within network interdiction \citep{sreekumaran2021equilibrium, wilt2019measuring} and this first effort helps to shed light on how two attackers can cooperate and coordinate their efforts to have the most impact on the trafficking network. Future work on applying interdiction models to disrupting trafficking networks can better model the capabilities of different stakeholders, as well as their willingness to cooperate, and their knowledge of each other's activities.

\section{Modeling Restructuring}
\label{sec:modelRest}
For the set of restructurable arcs, we use the same sets outlined in \cite{kosmas2022generating}. This set includes traffickers recruiting each other's victims (or alternatively, a victim joining a different trafficker's operation after their trafficker has been interdicted), traffickers assigning or taking victims from their bottom, recruiting new victims not currently in the network, back-up traffickers taking over an interdicted trafficker’s operation, a victim being promoted to take the place of an interdicted bottom, and traffickers assigning victims to their newly promoted bottom. \cite{kosmas2022generating} decided upon these types of restructurable arcs by analyzing transcripts from meetings with their qualitative research team and survivor-centered advisory group that described how trafficking networks may react after interdictions.

We extend the constraints on the restructuring outlined in \cite{kosmas2022generating} to a multi-period setting. We first propose constraints that link when interdictions occur to when restructurings are allowed. Consider a feasible interdiction plan $\bar{y} \in Y$. For $i \in T$, let $\delta_i^{min,out} = \min_{(i,j) \in A^{R,out}} \delta_{ij}^z$, and $\delta_i^{max,out} = \max_{(i,j) \in A^{R,out}} \delta_{ij}^z$, which help to capture the earliest and the latest time which a trafficker may initiate a restructuring after an interdiction has occurred. For $j \in V$, let $\delta_j^{min,in} = \min_{(i,j) \in A^{R,in}} \delta_{ij}^z$, and $\delta_j^{max,in} = \max_{(i,j) \in A^{R,in}} \delta_{ij}^z$, which help to capture the earliest and the latest time which a victim may initiate a restructuring after an interdiction has occurred.

\small{
\begin{align}
    & \sum_{(i,j) \in A_R^{out}} z_{ij}^{out} \le \sum_{h \in N: (i,h) \in A} \bar{y}_{h} &\text{ for all } i \in T \label{con:outRes}\\
    & \sum_{(i,j) \in A_R^{in}} z_{ij\tau}^{in} \le \sum_{h \in N: (h,j) \in A} \bar{y}_{h} &\text{ for all } j \in V \label{con:inRes}\\
    & \sum_{\substack{j \in N :(i,j) \in A^{R, out}\\\delta^z_{ij} \ge t_1}} \zeta_{ij(t_2+\delta^z_{ij})}^{out} \le \sum_{h: (i,h) \in A, (\delta^y_h +1) \le t_2} \bar{y}_{h} &\begin{multlined}\text{ for } i \in T, t_1 = \delta^{min, out}_{i}, \ldots, \delta^{max, out}_{i}\\ t_2 = 1, \ldots, \tau- \delta^{max, out}_i \end{multlined} \label{con:timeOutRes}\\
    & \sum_{\substack{i \in N :(i,j) \in A^{R, in}\\ \delta^z_{ij} \ge t_1}} \zeta_{ij(t+\delta^z_{ij})}^{in} \le \sum_{h: (h,j) \in A, (\delta^y_h +1) \le t} \bar{y}_{h} &\begin{multlined}\text{ for } j \in V, t_1 = \delta^{min, in}_{j}, \ldots, \delta^{max, in}_{j}\\ t_2 = 1, \ldots, \tau- \delta^{max, in}_j \end{multlined} \label{con:timeInRes}
\end{align}
}

Constraints \eqref{con:outRes}-\eqref{con:inRes} determine the number of restructurings each trafficker and victim is allowed to make in response to the implemented interdictions. Constraints \eqref{con:timeOutRes}-\eqref{con:timeInRes} enforce that a restructured arc can only come online after the required amount of time has passed. We next define constraints to enforce the relationship between $z$ and $\zeta$.

\small{
\begin{align}
    & \zeta_{ijt}^{out} \ge \zeta_{ij(t-1)}^{out} &\text{ for } (i,j) \in A^{R,out}, t = 2, \ldots \tau \label{con:subseqOut}\\
    & \zeta_{ijt}^{in} \ge \zeta_{ij(t-1)}^{in} &\text{ for } (i,j) \in A^{R, in}, t = 2, \ldots \tau  \label{con:subseqIn}\\
    & \zeta_{ij\tau}^{out} = z_{ij}^{out} &\text{ for } (i,j) \in A^{R,out} \label{con:finOut}\\
    & \zeta_{ij\tau}^{in} = z_{ij}^{out} &\text{ for } (i,j) \in A^{R,in} \label{con:finIn}
\end{align}
}

Constraints \eqref{con:subseqOut}-\eqref{con:subseqIn} enforce that if an arc has turned online, it must remain online in the following time period. Constraints \eqref{con:finOut}-\eqref{con:finIn} ensure that if an arc was restructured, it will be online in the last time period. The combination of these constraints, as well as \eqref{con:outRes}-\eqref{con:timeInRes}, ensure that, once it is decided that an arc is to be restructured, it will first come online in a time period that it is allowed to, and remain online for the rest of the time horizon. They also ensure that the restructurings chosen occur in the appropriate time periods based on the chosen interdictions, and that the number of restructurings is limited by the number of relevant interdictions.

The next constraints limit the number of actions that each trafficker can take over the entire time horizon, independent of the number of interdictions that have occurred. This is to mirror the budget constraint of the attacker. Let $c_i^{out}$ be the number of actions trafficker $i$ can take, and $c_j^{in}$ be the number of actions victims $j$ can take We note that the constraints include the ability of each trafficker to bring new victims into their organization, assign new victims to their bottom, and to promote a victim to be a bottom.

\small{
\begin{align}
    & \sum_{j \in N: (i,j) \in A^{R,out}}z^{out}_{ij} + \sum_{\substack{k \in N, h \in N: (i,k) \in A \\(k,h) \in A^{R,out}}} z^{out}_{kh} + \sum_{\substack{l \in N: (i,l) \in A\\ (\alpha, l) \in A^{R,out}}} z_{\alpha l}^{out} \le c_i^{out} & \text{ for all } i \in T \label{con:trafBudget}
\end{align}
}

We additionally include constraints that limit the number of interactions with each victim (including prospective victims), preventing them from belonging to too many trafficking operations.

\small{
\begin{align}
    & \sum_{(i,j) \in A^{R,out}} z^{out}_{ij} + \sum_{(i,j) \in A^{R,in}} z^{in}_{ij} \le c_j^{in} & \text{ for } j \in V \cup V^R \label{con:vicBudget}
\end{align}
}

For trafficking operations that have a back-up trafficker, we include constraints indicating when a back-up trafficker can replace an interdicted trafficker.

\small{
\begin{align}
    & \zeta_{\alpha j t}^{out} = 0 & \text{ for } (i,j) \in T^R, t \in 1,\ldots, (\delta^y_i + \delta^{z, out}_{\alpha j}) \label{con:noBackup}\\
    & \zeta_{\alpha j t}^{out} = \bar{y}_i & \text{ for } (i,j) \in T^R, t = (\delta^y_i + \delta^{z, out}_{\alpha j} + 1), \ldots, \tau \label{con:yesBackup}
\end{align}
}

Constraints \eqref{con:noBackup} enforce that the arc representing the back-up trafficker will be offline for a number of time periods equal to the time to interdict the primary trafficker and restructure to the back-up trafficker. Constraints \eqref{con:yesBackup} allow for the arc to come online if the trafficker has been interdicted. 

The next set of constraints enforces when victims are allowed to be promoted to be a bottom.

\small{
\begin{align}
    & \zeta^{out}_{\alpha j t} = 0 & \text{ for } (i,j) \in B^R, t = 1, \ldots, (\delta^y_i + \delta^{z, out}_{\alpha j}) \label{con:noPromote}\\
    & \sum_{(i,j) \in B^R} \zeta^{out}_{\alpha j (t+\delta^{z, out}_{\alpha j})} \le \bar{y}_i & \text{ for } i \in B, t = (\delta_i^y+1),\ldots, \tau \label{con:yesPromote}\\
    & z^{out}_{\alpha j t} \le 1 - \bar{y}_j & \text{ for } j \in V \text{ s.t. } \exists i \in B, (i,j) \in B^R \label{con:intPromote}\\
    & \sum_{h \in V: (j,h) \in A^{R,out}} z^{out}_{jh} \le |V|z^{out}_{\alpha j} & \text{ for } j \in V \text{ s.t. } \exists i \in B, (i,j) \in B^R \label{con:vicNewBot}
\end{align}
}

Constraints \eqref{con:noPromote}-\eqref{con:yesPromote} act similarly to constraints \eqref{con:noBackup}-\eqref{con:yesBackup}, allowing a victim to be promoted to a bottom if the current bottom of the operations is interdicted and after the requisite time has passed. Constraints \eqref{con:intPromote} prevent an interdicted victim from being promoted to be a bottom. Constraints \eqref{con:vicNewBot} allows for a trafficker to assign some of their victims to the newly promoted bottom once they have been promoted. 

The last constraints enforce that any arc in $A^{R,out} \cap A^{R,in}$ cannot have both $z^{out}$ and $z^{in}$ be nonzero, preventing an arc from being restructured twice.

\small{
\begin{align}
    & z^{out}_{ij} + z^{in}_{ij} \le 1 & \text{ for all } (i,j) \in A^{R,out} \cap A^{R,in} \label{con:noOverlap}
\end{align}
}

\section{Model Derivation}
\label{sec:modelderiv}
We now describe the integer programming formulation of the multi-period max flow network interdiction problem with restructuring (MP-MFNIP-R) including the constraints on interdictions, and derive the column-and-constraint algorithm to solve it. The following program incorporates the constraints defined in Section \ref{sec:modelInt} and $Z(y)$ is defined by the constraints introduced in Section \ref{sec:modelRest}. Since the inclusion of a second attacker does not impact the model derivation, we only discuss the derivation of the model with a single attacker in this section. We will discuss the changes in the model by including the second attacker after presenting the final model.

\begin{singlespace}
\begin{subequations}
\label{minmax2}
\footnotesize\begin{align} 
    \min_{y, \Tilde{r}} \max_{x, z, \zeta} ~~~ & \sum_{t = 1}^{\tau} \sum_{i \in N: (\alpha,i) \in A \cup A^{R,out}} x_{\alpha it} & \nonumber\\
    \text{s.t. }& \sum_{(h,i) \in A \cup A^{R,out}} x_{hit} = x_{it} & \text{ for } i \in N \setminus \alpha, t = 1,\ldots,\tau \label{con:inflow}\\
    & x_{it} = \sum_{(i,h) \in A \cup A^{R,out}} x_{iht} & \text{ for } i \in N \setminus \omega, t = 1,\ldots,\tau \label{con:outflow}\\
    & 0 \le  x_{ijt} \le u_{ij} & \text{ for } (i,j) \in A, t = 1,\ldots,\tau \label{con:arcCap}\\
    & 0 \le  x_{ijt} \le u_{ij} \zeta^{out}_{ij} & \text{ for } (i,j) \in A^{R,out} \setminus A^{R,in}, t = 1,\ldots,\tau \label{con:arcOutCap}\\
    & 0 \le  x_{ijt} \le u_{ij} (\zeta^{out}_{ijt}+\zeta^{in}_{ijt}) & \text{ for } (i,j) \in A^{R,in}, t = 1,\ldots,\tau \label{con:arcInCap}\\
    & 0 \le  x_{it} \le u_i(1 - \gamma_{it}) & \text{ for } i \in N\setminus\{j \in V: \exists h \in B \text{ with } (h,j) \in B^R \}, t = 1,\ldots,\tau \label{con:nodecap}\\
    & 0 \le  x_{jt} \le u_j(1 - \gamma_{jt}) + \Tilde{u_j} \zeta^{out}_{\alpha jt} & \text{ for } j \in V \text{s.t.} \exists i \in B, (i,j) \in B^R, t = 1,\ldots,\tau \label{con:bottomCap}\\
    &\sum_{i \in T} \Tilde{r}_i y_i + \sum_{j \in B \cup T} r_i y_i \le b & \label{con:intBudgetInModel}\\
    &\Tilde{r}_i \ge r_i^{min} & \text{ for } i \in T\\
    &\Tilde{r}_i \ge r_i - \sum_{l \in B \cup V} d_{il} y_i & \text{ for } i \in T \\
    & \gamma_{it} = 0 & \text{ for } i \in N, t = 1,\ldots,\delta_i^y \\
    & \gamma_{it} = y_i & \text{ for } i \in N, t \in (\delta_i^y+1), \ldots, \tau \label{con:intTimeInModel}\\
    & y \in \{0,1\}^{|N|\setminus \{\alpha,\omega\}} & \\
    & \gamma \in \{0,1\}^{|N|\setminus \{\alpha,\omega\} \times \tau} & \\
    & z, \zeta \in Z(y)
\end{align}
\end{subequations}
\end{singlespace}

We follow the procedure outlined in \cite{kosmas2020interdicting} to derive a single-level minimization problem that can be solved as part of column-and-constraint generation with partial information. For the sake of brevity, we include the full model derivation in Appendix \ref{fullderiv}, and only state the final master problem that is solved.  

We first describe variables associated with the standard column-and-constraint generation procedure. Let $\eta$ be the variable representative of the objective value of the bilevel optimization problem, and let $n$ be the number of restructurings plans being considered in the optimization model. Let $\pi^{+^k}_{it}$ and $\pi^{-^k}_{it}$ be the dual variables associated with constraints \eqref{con:inflow} and \eqref{con:outflow}, respectively, for node $i$ at time $t$ for the $k^{th}$ restructuring plan. Let $\theta_{ijt}^k$ be the dual variables associated with constraints \eqref{con:arcCap}-\eqref{con:arcInCap} for arc $(i,j)$ at time $t$ for the $k^{th}$ restructuring plan, and let $\theta^k_{it}$ be the dual variable associated with constraints \eqref{con:nodecap}-\eqref{con:bottomCap} for node $i$ at time $t$ for the $k^{th}$ restructuring plan.

Now we describe parameters and variables specific to the partial information adaptation, which depends on each previously considered restructuring plan $z^k$. Let $z^{out,k}_{ij}$ be the parameter indicating if arc $(i,j)$ was restructured by node $i$ in the $k^{th}$ restructuring plan, and let $z^{in,k}_{ij}$ be the parameter indicating if arc $(i,j)$ was restructured by node $j$ in the $k^{th}$ restructuring plan. Let $w^{out,k}_{ijt}$ be the indicator of if arc $(i,j)$ can be restructured out of node $i$ in time period $t$ for restructuring plan $k$, and let $w^{in,k}_{ijt}$ be the indicator of if arc $(i,j)$ can be restructured in from node $j$ in time period $t$ for restructuring plan $k$. Let $\lambda^{out,k}_{it}$ be the indicator of if all of the ``out" restructurings for trafficker $i$ in time period $t$ for restructuring plan $k$ have been performed, and let $\lambda^{in,k}_{it}$ be the indicator of if all of the ``in" restructurings for victim $i$ in time period $t$ for restructuring plan $k$ have been performed.

\begin{singlespace}
\begin{subequations}
\label{tlfFin}
\footnotesize\begin{align}
    \min_{y, \gamma, w \pi,\theta} ~~~ & \eta \nonumber \\
    \text{s.t.} \hspace{.050cm } & \eta \ge \sum_{t = 1}^{\tau} [\sum_{i \in N \setminus \{\alpha,\omega\}} u_i \theta^k_{it} + \sum_{(i,j) \in A \cup A^{R,out} \cup A^{R,in}} u_{ij} \theta^k_{ijt} + \sum_{(i,j) \in B^R} \tilde{u_j} z^{out,k}_{\alpha jt} w^{out,k}_{\alpha jt} \theta^k_{\alpha jt} ]  \label{con:obj}\\
    & \omit\hfill$\text{ for } k = 1, \ldots, n \nonumber$\\  
    & \pi_{jt}^{+^k} + \theta^k_{\alpha jt} \ge 1  \label{con:sourceSide}\\
    & \omit\hfill$\text{ for } k = 1,\ldots,n, (\alpha,j) \in A, t = 1, \ldots, \tau$ \nonumber\\
    & \pi_{jt}^{+^k} - \pi_{it}^{-^k} + \theta^k_{ijt} \ge 0 \label{con:inCut}\\
    & \omit\hfill$\text{ for } k = 1,\ldots,n, (i,j) \in A \text{ s.t. } i \ne \alpha, j \ne \omega, t = 1, \ldots, \tau$ \nonumber\\
    & \pi_{it}^{-^k} - \pi_{it}^{+^k} + \theta^k_{it} \ge - \gamma_{it}   \label{con:nodeInCut}\\
    & \omit\hfill$\text{ for } k = 1,\ldots,n, i \in N\setminus \{\alpha,\omega\}, t = 1, \ldots, \tau$\nonumber\\
    & -\pi_{it}^{-^k} + \theta^k_{i\omega t} \ge 0 \label{con:sinkSide}\\
    & \omit\hfill$\text{ for } k = 1,\ldots,n, (i,\omega) \in A, t = 1, \ldots, \tau$ \nonumber\\
    & \pi_{jt}^{+^k} + \theta^k_{\alpha jt} \ge w^{out,k}_{\alpha jt} + z^{out,k}_{\alpha j} - 1 \label{con:sourceSinkNew}\\
    & \omit\hfill$\text{ for } k = 1,\ldots,n, (\alpha,j) \in A^{R, out}, t = 1, \ldots, \tau$\nonumber\\
    & \pi_{jt}^{+^k} - \pi_{it}^{-^k} + \theta^k_{ijt} \ge w^{out,k}_{ijt} + z^{out,k}_{ij} - 2  \label{con:inCutOut}\\
    & \omit\hfill$\text{ for } k = 1,\ldots,n, (i,j) \in  A^{R, out} \text{ s.t. } i \ne \alpha, j \ne \omega, t = 1, \ldots, \tau$\nonumber\\
    & \pi_{jt}^{+^k} - \pi_{it}^{-^k} + \theta^k_{ijt} \ge w^{in,k}_{ijt} + z^{in,k}_{ij} - 2  \label{con:inCutIn}\\
    & \omit\hfill$\text{ for } k = 1,\ldots,n, (i,j) \in  A^{R, in} \text{ s.t. } i \ne \alpha, j \ne \omega, t = 1, \ldots, \tau$\nonumber\\
    & \theta^k \ge 0 \\
    & \omit\hfill$\text{ for } k = 1, \ldots, n \nonumber$\\ 
    & \mu^{out}_i \lambda^{out,k}_{it} + \sum_{(i,j) \in A^{R,out}: z^{out,k}_{ij}=1} w^{out,k}_{ij(t+\delta^{z, out}_{ij})} \ge \sum_{(i,h) \in A} \gamma_{ht} \label{con:PIOut1}\\
    & \omit\hfill$\text{ for } k \in 1, \ldots, n, i \in T, t \in 1, \ldots, \tau-\delta^{max,out}_i$ \nonumber\\
    & \mu^{in}_j \lambda^{in,k}_{jt} + \sum_{(i,j) \in A^{R,in}:  z^{in,k}_{ij}=1} w^{in,k}_{ij(t+ \delta^{z,in}_{ij})} \ge \sum_{(h,j) \in A} \gamma_{ht} \label{con:PIIn1}\\
    & \omit\hfill$\text{ for } k \in 1, \ldots, n, j \in V, t \in 1, \ldots, \tau-\delta^{max,in}_i$ \nonumber\\
    & \lambda^{out,k}_{it} \le \frac{\sum_{(i,j) \in A^{R,out}:  z^{out,k}_{ij}=1} w^{out,k}_{ij(t+\delta^{z,out}_{ij})}}{\sum_{(i,j) \in A^{R,out}:  z^{out.k}_{ij}=1} z^{out,k}_{ij}} \label{con:PIOut2}\\
    & \omit\hfill$\text{ for } k \in 1, \ldots, n, i \in T, t = 1, \ldots, \tau$ \nonumber\\
    & \lambda^{in,k}_{it} \le \frac{\sum_{(i,j) \in A^{R,in}: z^{in,k}_{ij}=1} w^{in,k}_{ij(t+\delta^{z,in}_{ij})}}{\sum_{(i,j) \in A^{R,in}:  z^{in,k}_{ij}=1} z^{in,k}_{ij}} \label{con:PIIn2}\\
    & \omit\hfill$\text{ for } k = 1, \ldots, n, i \in V, t = 1, \ldots, \tau$ \nonumber\\
    & w^{out,k}_{ij} \ge w^{out,k}_{ij(t-1)} \label{con:subseqOutW}\\
    & \omit\hfill$\text{ for } k = 1, \ldots, n, (i,j) \in A^{R,out}, t \in 2 \ldots, \tau$  \nonumber\\
    & w^{in,k}_{ijt} \ge w^{in,k}_{ij(t-1)} \label{con:subseqInW}\\
    & \omit\hfill$\text{ for } k = 1, \ldots, n, (i,j) \in A^{R,in}, t \in 2 \ldots, \tau$ \nonumber\\
    & w^{out,k}_{\alpha j t} \ge y_{i} \label{con:PIbackupTraf}\\
    & \omit\hfill$\text{ for } k = 1,\ldots, n,  (j,i) \in T^R, t \in \delta^y_{i} + \delta^{z,out}_{\alpha j}, \ldots, \tau$ \nonumber\\
    & w^{out,k}_{\alpha j (t + \delta^{z,out}_{\alpha j})} \ge y_i \label{con:PInewBottom}\\
    & \omit\hfill$\text{ for } k = 1, \ldots, n, (j,i) \in B^R \text{ s.t. } z^{out,k}_{\alpha j}=1, t \in \delta^y_i+1, \ldots, \tau - \delta^{z,out}_{\alpha j}$ \nonumber\\
    & w^{out,k}_{ijt} \ge z^{out,k}_{ij} \label{con:freeRes1}\\
    & \omit\hfill$\text{ for } k = 1, \ldots, n, i \in B, j \in V \text{ s.t. } (i,j) \in A^{R,out}$ \nonumber\\
    & w^{out,k}_{ijt} \ge z^{out,k}_{ij} \label{con:freeRes2}\\
    & \omit\hfill$\text{ for } k = 1, \ldots, n, i \in V, j \in V \text{ s.t. } (i,j) \in A^{R,out}, \exists l \in B, (i,l) \in B^R$  \nonumber\\
    &\sum_{i \in T} \Tilde{r}_i y_i + \sum_{j \in B \cup T} r_i y_i \le b  \\
    &\Tilde{r}_i \ge r_i^{min} \\
    & \omit\hfill$\text{ for } i \in T$\nonumber\\
    &\Tilde{r}_i \ge r_i - \sum_{l \in B \cup V} d_{il} y_i \\
    &\omit\hfill$\text{ for } i \in T$ \nonumber\\
    & \gamma_{it} = 0 \\
    & \omit\hfill$\text{ for } i \in N, t = 1,\ldots,\delta_i^y$ \nonumber\\
    & \gamma_{it} = y_i \label{intConUpdate}\\ 
    & \omit\hfill$\text{ for } i \in N, t \in (\delta_i^y+1), \ldots, \tau$ \nonumber\\
    & y \in \{0,1\}^{|N|\setminus \{\alpha,\omega\}} \\
    & \gamma \in \{0,1\}^{|N|\setminus \{\alpha,\omega\} \times \tau}
\end{align}
\end{subequations}
\end{singlespace}

We focus on describing the constraints regarding partial information, since the other constraints are a formulation of the minimum cut problem or are the constraints regarding interdiction. Constraints \eqref{con:PIOut1}-\eqref{con:PIIn1} allow for restructurings from the $k^{th}$ restructuring plan to be implemented after interdictions that allow for them have occurred. When a node is allowed to restructure more arcs than the number of arcs that node restructured in the $k^{th}$ restructuring plan, constraints \eqref{con:PIOut2}-\eqref{con:PIIn2} allow for $\lambda = 1$ to ensure the feasibility of constraints \eqref{con:PIOut1}-\eqref{con:PIIn1}. Constraints \eqref{con:subseqOutW}-\eqref{con:subseqInW} enforce that an arc is online in time periods after the time period it was restructured in. Constraints \eqref{con:PIbackupTraf} enforce that a back-up trafficker is restructured to when the primary trafficker is interdicted. Likewise, constraints \eqref{con:PInewBottom} enforce that the victim who was promoted to be the bottom is still promoted if the current bottom is interdicted. Constraints \eqref{con:freeRes1}-\eqref{con:freeRes2} enforce that any restructured arcs that are independent of interdictions are considered to be restructured. We note that there are bilinear terms $w^{out,k}_{\alpha jt} \theta^k_{\alpha jt}$ in the objective function constraints, and these terms can be linearized using the McCormick inequalities \citep{mccormick1976computability}.

Recall from Section \ref{sec:modelInt}, minor adjustments to \eqref{tlfFin} need to be made to incorporate the second attacker. We would additionally include constraint \eqref{con:2ndBudget}, and constraints \eqref{intConUpdate} would be changed to $\gamma_{it} = y_i + y'_i$ for any nodes $i \in V \cup V^R$, which are the nodes that the second attacker is able to interdict.

\sloppy
After solving \eqref{tlfFin}, we identify optimal interdiction decisions $\bar{y}$ for known restructuring plans $(z^1,\zeta^1,\ldots, z^n, \zeta^n)$, as well as a lower bound on the objective value of \eqref{minmax2}, $\bar{\eta}$. We then need to determine the optimal restructuring plan responding to $(\bar{y},\bar{\gamma})$. To do so, we solve the defender's problem using $(\bar{y},\bar{\gamma})$ as data.

\begin{singlespace}
\begin{subequations}
\label{subprob}
\footnotesize\begin{align} 
    \max_{x, z, \zeta} ~~~ & \sum_{t = 1}^{\tau} \sum_{i \in N: (\alpha,i) \in A \cup A^{R,out}} x_{\alpha it} & \nonumber\\
    \text{s.t. }& \sum_{(h,i) \in A \cup A^{R,out}} x_{hit} = x_{it} & \text{ for } i \in N \setminus \alpha, t = 1,\ldots,\tau\\
    & x_{it} = \sum_{(i,h) \in A \cup A^{R,out}} x_{iht} & \text{ for } i \in N \setminus \omega, t = 1,\ldots,\tau \\
    & 0 \le  x_{ijt} \le u_{ij} & \text{ for } (i,j) \in A, t = 1,\ldots,\tau \\
    & 0 \le  x_{ijt} \le u_{ij} \zeta^{out}_{ij} & \text{ for } (i,j) \in A^{R,out} \setminus A^{R,in}, t = 1,\ldots,\tau \\
    & 0 \le  x_{ijt} \le u_{ij} (\zeta^{out}_{ijt}+\zeta^{in}_{ijt}) & \text{ for } (i,j) \in A^{R,in}, t = 1,\ldots,\tau \\
    & 0 \le  x_{it} \le u_i(1 - \bar{\gamma}_{it}) & \text{ for } i \in N\setminus\{j \in V: \exists h \in B \text{ with } (h,j) \in B^R \}, t = 1,\ldots,\tau \\
    & 0 \le  x_{jt} \le u_j(1 - \bar{\gamma}_{jt}) + \Tilde{u_j} \zeta^{out}_{\alpha jt} & \text{ for } j \in V \text{s.t.} \exists i \in B, (i,j) \in B^R, t = 1,\ldots,\tau \\
    & z, \zeta \in Z(\bar{y})
\end{align}
\end{subequations}
\end{singlespace}

Solving \eqref{subprob} provides the optimal restructuring plan $\bar{z}$ responding to $\bar{y}$, as well as an upper bound on the objective value of \eqref{minmax2}. Let $U = \min \{U, \sum_{t = 1}^{\tau} \sum_{i \in N: (\alpha,i) \in A \cup A^{R,out}} x_{\alpha it}\} $ be the upper bound on \eqref{minmax2}, let $L = \bar{\eta}$ be the lower bound on \eqref{minmax2}, and let $\epsilon \ge 0$ be the desired error tolerance. If $U - L \le \epsilon$, then we have identified the desired solution, and the algorithm will terminate. Otherwise, we set $z^{n+1} = \bar{z}$, and repeat the process. We formalize this in Algorithm \ref{alg:cncg_MMFNIPR}.

\begin{algorithm}[h]
\caption{C\&CG for MP-MFNIP-R}
\label{alg:cncg_MMFNIPR}
\begin{algorithmic}
\STATE{\textbf{Initialize:}} lower bound $L = - \infty$, upper bound $U = \infty$, optimal interdiction decisions $(y^*, \gamma^*)=0$, optimal restructuring decisions $(z^1,\zeta^1) = (z^*,\zeta^*) = 0$, error tolerance $\epsilon \ge 0$. iteration counter $m=1$.
 \WHILE{$U - L > \epsilon$}
\STATE{\textbf{Step 1.}} Solve \eqref{tlfFin} for optimal interdiction decision $(y^n,\gamma^n)$ and objective value $\eta$. Set $L = \eta$.
\IF{$U - L \le \epsilon$}
 \STATE{terminate; $(y^*, \gamma^*)$ and $(z^*,\zeta^*)$ are the optimal decisions with objective value $U$.}
\ENDIF
\STATE{\textbf{Step 2.}} Input $(y^*, \gamma^*)$ into \eqref{subprob} as data and solve \eqref{subprob} for optimal restructuring decisions $(z^{n+1},\zeta^{n+1})$ and objective value $\sum_{t = 1}^{\tau} \sum_{i \in N: (\alpha,i) \in A \cup A^{R,out}} x_{\alpha it}$.
\IF{$\sum_{t = 1}^{\tau} \sum_{i \in N: (\alpha,i) \in A \cup A^{R,out}} x_{\alpha it} < U$}
 \STATE{Let $y^* = y^n$, $\gamma^*=\gamma^n$, $z^* = z^{n+1}$, $\zeta^* = \zeta^{n+1}$, $U = \sum_{t = 1}^{\tau} \sum_{i \in N: (\alpha,i) \in A \cup A^{R,out}} x_{\alpha it}$}.
\ENDIF
\STATE{\textbf{Step 3.}} Include constraints corresponding to $(z^{n+1},\zeta^{n+1})$ in \eqref{tlfFin}, create variables $\pi^{n+1}$, $\theta^{n+1}$, $w^{n+1}$, set $n = n + 1$, return to Step 1. 
\ENDWHILE
\end{algorithmic}
\end{algorithm}

\subsection{Model Simplifications: Upfront Interdiction}
We note that, if $\delta_i^y = 0$ for all $i \in N$, meaning that all interdictions are implemented before the network is operated by the defender, we can reduce the number of variables in the constraints. We no longer need $\gamma$, as a node will either be online or offline for the entire time horizon. Similarly, we no longer need to index $w^{out}, w^{in}$ by time, as a restructured arc $(i,j)$ will be in the network in time period $\delta_{ij}^z + 1$. This is reflected by only including the equivalent constraints for \eqref{con:PIOut1}-\eqref{con:PIIn2} from $\delta_{ij}^{z,out}+1$ (or $\delta_{ij}^{z,in}$, respectively) to $\tau$. This will only enforce the relationship between whether or not the arc is in the cut (if the arc is restructured in the network) and which side of the cut the nodes are on in the time periods that the arc will appear in. The following model incorporates this modeling simplification.

\begin{singlespace}
\begin{subequations}
\label{tlfUpFront}
\footnotesize\begin{align}
    \min_{y, \pi,\theta} ~~~ \eta \nonumber\\
    \text{s.t.} \hspace{.050cm }& \eta \ge \sum_{t = 1}^{\tau} \sum_{i \in N \setminus \{\alpha,\omega\}} u_i \theta^k_{it} + \sum_{(i,j) \in A \cup A^{R,out} \cup A^{R,in}} u_{ij} \theta^k_{ijt} + \sum_{(i,j) \in B^R} \tilde{u_j} z^{out,k}_{\alpha jt} w^{out,k}_{\alpha j} \theta^k_{\alpha j t} \\
    & \omit\hfill$\text{ for } k = 1, \ldots, n$\nonumber\\  
    &\pi_{jt}^{+^k} + \theta^k_{\alpha jt} \ge 1 \\
    & \omit\hfill$\text{ for } k = 1, \ldots, n, (\alpha,j) \in A, t = 1, \ldots, \tau$\nonumber\\
    & \pi_{jt}^{+^k} - \pi_{it}^{-^k} + \theta^k_{ijt} \ge 0 \\
    & \omit\hfill$\text{ for } (i,j) \in A \text{ s.t. } k = 1, \ldots, n, i \ne \alpha, j \ne \omega, t = 1, \ldots, \tau$\\
    & \pi_{it}^{-^k} - \pi_{it}^{+^k} + \theta^k_{it} \ge - y_{i} \\
    & \omit\hfill$\text{ for } k = 1, \ldots, n, i \in N\setminus \{\alpha,\omega\}, t = 1, \ldots, \tau$\nonumber\\
    & -\pi_{it}^{-^k} + \theta^k_{i\omega t} \ge 0 \\
    & \omit\hfill$\text{ for } k = 1, \ldots, n, (i,\omega) \in A, t = 1, \ldots, \tau$\nonumber\\
    & \pi_{jt}^{+^k} + \theta^k_{\alpha jt} \ge w^{out,k}_{\alpha j} + z^{out,k}_{\alpha jt} - 1 \\
    & \omit\hfill$\text{ for } k = 1, \ldots, n, (\alpha,j) \in A^{R, out}, t = 1, \ldots, \tau$\nonumber\\
    & \pi_{jt}^{+^k} - \pi_{it}^{-^k} + \theta^k_{ijt} \ge w^{out,k}_{ij} + z^{out,k}_{ijt} - 2 \\
    & \omit\hfill$\text{ for } (i,j) \in  A^{R, out} \text{ s.t. } k = 1, \ldots, n, i \ne \alpha, j \ne \omega, t = 1, \ldots, \tau$\nonumber\\
    & \pi_{jt}^{+^k} - \pi_{it}^{-^k} + \theta^k_{ijt} \ge w^{in,k}_{ij} + z^{in,k}_{ijt} - 2 \\
    & \omit\hfill$\text{ for } (i,j) \in  A^{R, in} \text{ s.t. } k = 1, \ldots, n, i \ne \alpha, j \ne \omega, t = 1, \ldots, \tau$\nonumber\\
    & \theta^k \ge 0 \\
    & \omit\hfill$\text{ for } k = 1, \ldots, n \nonumber$\\ 
    & \text{Constraints } \eqref{con:PIOut1} - \eqref{con:freeRes2}\nonumber\\
    &\sum_{i \in T} \Tilde{r}_i y_i + \sum_{j \in B \cup T} r_i y_i \le b  \\
    &\Tilde{r}_i \ge r_i^{min} \\
    & \omit\hfill$\text{ for } i \in T$\nonumber\\
    &\Tilde{r}_i \ge r_i - \sum_{l \in B \cup V} d_{il} y_i \\
    & \omit\hfill$\text{ for } i \in T $\nonumber\\
    & y \in \{0,1\}^{|N|\setminus \{\alpha,\omega\}} 
\end{align}
\end{subequations}
\end{singlespace}

Furthermore, we are able to reduce the size of the overall problem due to upfront interdiction. Since the time periods that the restructurings occur in are no longer dependent on the time periods of the interdictions that allowed for those restructurings to occur in, we can divide the time horizon into network ``phases" based on when multiple sequential time periods do not have any new restructurings. Given the $k^th$ restructuring plan $z^k$, let $t_1^k, \ldots, t_f^k$ be the \emph{unique} time periods that restructurings in $z^k$ occur in. We can define network phase $s$ as $G^{sk} = (N(G), A(G) \cup \{(i,j) \in A^{R, out}: z^{out,k}_{ij} = 1, \delta_{ij}^z \le t_i^k\} \cup \{(i,j) \in A^{R, out}: z^{in,k}_{ij} = 1, \delta_{ij}^z \le t_i^k\})$. We then need to divide the time horizon $\{1, \ldots, \tau\}$ into $S^k$ intervals in which we are in the different network phases. Let $\tau_s^k = t_1^k - 1$ for $s=1$, $\tau_s^k = t_{s+1}^k - t_s^k$ for $s = 2, \ldots, t_f^k$, and $\tau_s^k = \tau - t_f^k - 1$ for $s = S^k$.

\begin{singlespace}
\begin{subequations}
\label{tlfphase}
\footnotesize\begin{align}
    \min_{y, \pi,\theta} ~~~ \eta \nonumber\\
    \text{s.t.} \hspace{.050cm }& \eta \ge \sum_{s = 1}^{S^k} \tau_s^k [\sum_{i \in N \setminus \{\alpha,\omega\}} u_i \theta^k_{is} + \sum_{(i,j) \in A \cup A^{R,out} \cup A^{R,in}} u_{ij} \theta^k_{ijs} + \sum_{(i,j) \in B^R} \tilde{u_j} z^{out,k}_{\alpha js} w^{out,k}_{\alpha j} \theta^k_{\alpha j} ] \\
    & \omit\hfill$\text{ for } k = 1, \ldots, n$\nonumber\\  
    &\pi_{js}^{+^k} + \theta^k_{\alpha js} \ge 1 \\
    & \omit\hfill$\text{ for } k = 1, \ldots, n, (\alpha,j) \in A, s = 1, \ldots, S^k$\nonumber\\
    & \pi_{js}^{+^k} - \pi_{is}^{-^k} + \theta^k_{ijs} \ge 0 \\
    & \omit\hfill$\text{ for } (i,j) \in A \text{ s.t. } k = 1, \ldots, n, i \ne \alpha, j \ne \omega, s = 1, \ldots, S^k$\nonumber\\
    & \pi_{is}^{-^k} - \pi_{is}^{+^k} + \theta^k_{is} \ge - y_{i} \\
    & \omit\hfill$\text{ for } k = 1, \ldots, n, i \in N\setminus \{\alpha,\omega\}, s = 1, \ldots, S^k $\nonumber\\
    & -\pi_{is}^{-^k} + \theta^k_{i\omega s} \ge 0 \\
    & \omit\hfill$\text{ for } k = 1, \ldots, n, (i,\omega) \in A, s = 1, \ldots, S^k$\nonumber\\
    & \pi_{js}^{+^k} + \theta^k_{\alpha js} \ge w^{out,k}_{\alpha j} + z^{out,k}_{\alpha js} - 1 \\
    & \omit\hfill$\text{ for } k = 1, \ldots, n, (\alpha,j) \in A^{R, out}, s = 1, \ldots, S^k$\nonumber\\
    & \pi_{js}^{+^k} - \pi_{is}^{-^k} + \theta^k_{ijs} \ge w^{out,k}_{ij} + z^{out,k}_{ijs} - 2 \\
    & \omit\hfill$\text{ for } (i,j) \in  A^{R, out} \text{ s.t. } k = 1, \ldots, n, i \ne \alpha, j \ne \omega, s = 1, \ldots, S^k$\nonumber\\
    & \pi_{js}^{+^k} - \pi_{is}^{-^k} + \theta^k_{ijs} \ge w^{in,k}_{ij} + z^{in,k}_{ijs} - 2 \\
    & \omit\hfill$\text{ for } (i,j) \in  A^{R, in} \text{ s.t. } k = 1, \ldots, n, i \ne \alpha, j \ne \omega, s = 1, \ldots, S^k$\nonumber\\
    & \theta^k \ge 0 \\
    & \omit\hfill$\text{ for } k = 1, \ldots, n \nonumber$\\ 
    & \text{Constraints } \eqref{con:PIOut1} - \eqref{con:freeRes2}\nonumber\\
    &\sum_{i \in T} \Tilde{r}_i y_i + \sum_{j \in B \cup T} r_i y_i \le b \\
    &\Tilde{r}_i \ge r_i^{min} \\
    & \omit\hfill$\text{ for } i \in T$\nonumber\\
    &\Tilde{r}_i \ge r_i - \sum_{l \in B \cup V} d_{il} y_i \\
    & \omit\hfill$\text{ for } i \in T $\nonumber\\
    & y \in \{0,1\}^{|N|\setminus \{\alpha,\omega\}} 
\end{align}
\end{subequations}
\end{singlespace}

\section{Modeling-Based Augmentations}
\label{sec:aug}
We seek to improve our C\&CG algorithm with modeling-based improvements. The first improvement is to augment previously visited restructuring plans. We do so by including additional arcs as ``previously restructured" while still maintaining feasibility.  This approach helps to better take advantage of the way partial information about previously visited restructured plans provide bounds on the current interdiction decisions. Note that, as long as a restructuring plan satisfies the constraints that are independent of $y$, then the $w$ variables will ensure that the constraints that are dependent on $y$ are satisfied, producing a feasible restructuring plan. As such, any previously visited restructuring plan can be augmented to include additional arcs \emph{as long as} the constraints independent of $y$ are satisfied. 

We provide a simple, yet effective augmentation. Every previously visited restructuring plan (including the initial iteration: the empty restructuring plan) is augmented to include every back-up trafficker, whether or not the back-up traffickers were restructured to. That is, for every $k = 1, \ldots, n$, $(j,i) \in T^R$, $z^{out,k}_{\alpha j} = 1$. Note that the only constraints on whether or not a back-up trafficker can be restructured are the constraints indicating whether or not the primary trafficker has been interdicted. Thus, this change will not impact the feasibility of any constraints independent of $y$. Since, for $(i,j) \in T^R$, $w^{out,k}_{\alpha j t} = 0$ for all $t$ if $y_i =0$, and $w^{out,k}_{\alpha j t} = 1$ for $t \ge \delta_i^y + \delta_{\alpha j}^{z,out} + 1$ if $y_i = 1$, the resulting restructuring sub plan will be feasible. This augmentation will allow the master problem to better project what the impact on flow will be if a trafficker is interdicted, which will in turn produce better lower bounds when traffickers are interdicted.

\sloppy The next improvement we propose is to project how new victims may be recruited. For each previously considered restructuring plan, we can identify if a prospective victim $j$ has been recruited or not by computing $\sum_{i \in T: (i,j) \in A^{R,out}} z^{out,k}_{ij}$. If $\sum_{i \in T: (i,j) \in A^{R,out}} z^{out,k}_{ij} = 1$, then prospective victim $j$ has been recruited in restructuring plan $k$. Otherwise, $\sum_{i \in T: (i,j) \in A^{R,out}} z^{out,k}_{ij} = 0$, meaning that prospective victim $j$ has not been recruited. We can thus identify the set of prospective victims that have not yet been recruited as $P^k = \{j \in V^R: \sum_{i \in T: (i,j) \in A^{R,out}} z^{out,k}_{ij} = 0\}$. Additionally, we can identify the \emph{latest} a victim can be recruited as $\bar{t} = \max_{i \in B \cup V} \delta_i^y + \max_{(i,j) \in A^{R,out}: j \in P^k} \delta_{ij}^{z,out}$. Likewise, we can identify which traffickers still have the ability to act (recall that constraints \eqref{con:trafBudget} restrict the number of acts each trafficker can take, which is independent of the interdiction decisions). Let $a_i^k = \sum_{j \in N: (i,j) \in A_R^{out}} z^{out}_{ij} + \sum_{\substack{l \in N, h \in N: (i,l) \in A \\(l,h) \in A_R^{out}}} z^{out}_{lh} + \sum_{\substack{l \in N: (i,l) \in A\\ (\alpha, l) \in A_R^{out}}} z_{\alpha l}^{out}$ be the number of actions taken by trafficker $i$ in restructuring plan $k$. Note that if $a_i^k =  c_i^{out}$, then trafficker $i$ cannot take any more actions in restructuring plan $k$. However, if $a_i^k < c_i^{out}$, then trafficker $i$ may still be able to recruit new victims. Let $C^k = \{i \in T: a_i^k < c_i^{out}\}$ be the set of traffickers that can still recruit in restructuring plan $k$. From these two sets, we can identify opportunities for recruitment. Let $A^{k,rec} = \{(i,j) \in A^{R,out}: i \in C^k, j \in P^k\}$ be the set of restructurable arcs $(i, j)$ where a trafficker $i$ that can still act recruits prospective victim $j$. For $k = 1, \ldots, n$, $(i,j) \in A^{k, rec}$, let $\psi_{ijt}^k$ be the indicator variable as to whether or not $(i,j)$ may be restructured in addition to restructuring plan $k$ in time period $t$. We want to define constraints that enforce that, if more interdictions have occurred that would allow for more restructurings to occur, arcs in $A^{k, rec}$ can additionally be restructured to augment that restructuring plan. In order to do so, we need to define two additional variables to ensure feasibility with big-$M$ style constraints. Let $\nu_{ki}$ be the indicator of whether or not trafficker $i$ has performed $c_i^{out}$ actions (including projected recruitment) in plan $k$, and let $\xi_{ki}$ be the indicator of whether or not trafficker $i$ has restructured all of the arcs they can in $A^{k,rec}$. We can define the following constraints: 

\begin{singlespace}
\begin{align}
    & B(\nu_{ki} + \xi_{ki}) + \sum_{j \in P^k: (i,j) \in A^{k,rec}} \psi_{ij\bar{t}}^k \ge \sum_{(i,j) \in A} y_j - \sum_{(i,j) \in A^{R,out}} w_{ij\bar{t}}^{out,k}  \label{con:ProjRec}\\ 
    & \omit\hfill$\text{ for } k \in 0,\ldots, n, i \in C^k $\nonumber\\
    & \sum_{j \in P^k: (i,j) \in A^{k,rec}} \psi_{ij\bar{t}} \le c_i^{out} - a_i^k   \label{con:TrafAct}\\ 
    & \omit\hfill$\text{ for } k \in 0,\ldots, n, i \in C^k$ \nonumber\\
    & \nu_{ki} \le \frac{1}{c_i^{out}}\left(\sum_{\substack{j \in N: (i,j) \in A_R^{out}\\ z^{out,k}_{ij}=1}} \bar{w}^{out,k}_{ij\tau} + \sum_{\substack{l \in N, h \in N: (i,l) \in A\\ (l,h) \in A_R^{out}\\ z^{out,k}_{lh}=1}} \bar{w}^{out,k}_{lh\tau} + \sum_{\substack{l \in N: (i,l) \in A\\ (\alpha, l) \in A_R^{out}\\ z^{out,k}_{\alpha l}=1}} \bar{w}_{\alpha l \tau}^{out,k} + \sum_{j \in P^k: (i,j) \in A^{k,rec}} \psi_{ij\bar{t}}^k\right)   \label{con:overflow1}\\ 
    & \omit\hfill$\text{ for } k = 1,\ldots, n, i \in C^k$ \nonumber\\ 
    & \xi_{ki} \le \frac{\sum_{j \in P^k: (i,j) \in A^{k,rec}} \psi_{ij\bar{t}}^k}{\left|j \in P^k: (i,j) \in A^{k,rec}\right|}  \label{con:overflow2}\\ 
    & \omit\hfill$\text{ for } k = 1,\ldots, n, i \in C^k$ \nonumber\\
    & \psi_{ijt}^k \ge \psi_{ij(t-1)}^k   \label{con:subseqProjRec}\\
    & \omit\hfill$\text{ for } (i,j) \in A^{k,rec}, t \in (\bar{t}+1),\ldots,\tau$ \nonumber
\end{align}
\end{singlespace}

Constraints \eqref{con:ProjRec} enforce that, for each previously considered restructuring plan, additional recruitment occurs when more interdictions occur than restructurings that were enabled by interdictions. Constraints \eqref{con:TrafAct} enforce that every trafficker takes at most $c_i^{out}$ actions. Constraints \eqref{con:overflow1} enforce that $c_i^{out}$ actions are taken before allowing $\nu_{ki} = 1$, which ensures the feasibility of \eqref{con:ProjRec} when more than $c_i^{out}$ interdictions disrupt nodes incident to trafficker $i$. Likewise, constraints \eqref{con:overflow2} enforce that all possible potential recruitment occurs before allowing $\xi_{ki} =1$, which ensures the feasibility of \eqref{con:ProjRec} when all $\psi_{ij\bar{t}}^k = 1$ for all $j \in P^k: (i,j) \in A^{k,rec}$, i.e., all potential recruitment opportunities are taken. Constraints \eqref{con:subseqProjRec} ensure that an arc stays online once it comes online.

We note that the introduction of these $\psi$ variables may cause constraints \eqref{con:vicBudget} to be infeasible. However, in our model, $u_j = 1$ for all $j \in V^R$. This allows us to show that, even if the constraints are violated, it will not impact the minimum cut in the network. First, we define what it means for a node to be in the minimum cut.

\begin{defn}
A node $i \in N$ is in the minimum cut at time $t$ if $\theta_{it}=1$.
\end{defn}

\begin{thrm}
    For every restructuring plan $k$, every prospective victim $v \in P^k$ and time period $t \ge \bar{t}$, at most one trafficker node $i$ that is not in the minimum cut will be brought into the minimum cut when $\psi^k_{ivt} = 1$, regardless of the number of trafficker nodes where $\psi^k_{ivt} = 1$.
\end{thrm}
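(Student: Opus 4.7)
The plan is to exploit the fact that every prospective victim $v$ has capacity $u_v = 1$, and to argue by a cut-exchange comparison that uses the min-cut interpretation $\theta_{it} = 1$ introduced just before the theorem statement. First, I would fix a restructuring plan $k$, a prospective victim $v \in P^k$, and a time period $t \ge \bar{t}$, and let $I = \{i \in T : \psi^k_{ivt} = 1\}$ denote the set of traffickers whose augmented recruitment arcs into $v$ are turned on at time $t$. The only way activating the $\psi$ variables for this $v$ can create new source-to-sink paths at time $t$ is through $v$ itself, since every such new arc terminates at $v$.

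Next, I would compare two strategies for blocking these new paths within the cut formulation. Strategy (a) places $v$ on the cut at time $t$, contributing $u_v = 1$ to the cut value and severing every new recruitment path simultaneously. Strategy (b) places a subset $I' \subseteq I$ of traffickers on the cut; this contributes at least $\sum_{i \in I'} u_i \ge |I'|$ to the cut value, since each trafficker has integer capacity $u_i \ge 1$. Thus strategy (a) is no more expensive than strategy (b) whenever $|I'| \ge 1$, and is strictly cheaper as soon as $|I'| \ge 2$.

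The main argument is then by contradiction. Suppose two distinct traffickers $i_1, i_2 \in I$ are brought into the minimum cut at time $t$ solely as a consequence of the $\psi^k_{ivt} = 1$ activations, meaning neither is required to separate any source-to-sink path at time $t$ other than those routed through $v$. Construct an alternative cut that drops $i_1$ and $i_2$ from the cut at time $t$ and instead includes $v$. This cut still separates every source-to-sink path (those through $v$ are blocked by $v$ itself, and all other paths remain blocked because $i_1, i_2$ were unnecessary for them), but its cost is lower by at least $u_{i_1} + u_{i_2} - u_v \ge 1$, contradicting minimality. Hence at most one trafficker from $I$ can have been added to the minimum cut.

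The main obstacle will be formalizing the phrase "brought into the minimum cut" — specifically, disentangling trafficker nodes that sit in the cut for reasons independent of $v$ (e.g., to block flow through their own operations) from those that are in the cut only on account of the new recruitment arcs. I would handle this by partitioning $I$ into those traffickers already contributing to some other min-cut obligation at time $t$ (who are unaffected by the exchange) and those whose presence in the cut would only be justified by the $\psi$-induced paths through $v$; the exchange argument applies to the latter, and the unit capacity of $v$ ensures that replacing any two such traffickers by $v$ alone preserves feasibility of the cut while strictly reducing its value.
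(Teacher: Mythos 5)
Your argument is essentially correct, but it takes the dual route where the paper takes the primal one. The paper fixes the canonical minimum cut produced by Ford--Fulkerson on the residual network and argues with augmenting paths: every new source--sink path created by the $\psi$ arcs has the form $(\alpha,i,v,\omega)$, and since $u_v=1$ and capacities are integral, only one unit of flow can be routed through $v$, so only one such augmenting path is ever used; any other trafficker $j$ with $\psi^k_{jvt}=1$ remains reachable from $\alpha$ in the residual network and therefore stays on the source side of the cut. You instead work directly with cuts: since $v\in P^k$ carries no flow before the $\psi$ arcs are added, appending $v$ to the old cut blocks every new path at cost $u_v=1$, so swapping two newly-included traffickers for $v$ would strictly decrease the cut value, contradicting minimality. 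What your approach buys is a self-contained exchange argument that does not invoke a particular max-flow algorithm; what the paper's approach buys is a clean resolution of exactly the issue you flag as your ``main obstacle.'' Because minimum cuts are not unique, the phrase ``brought into the minimum cut'' is only well-defined relative to a fixed canonical cut, and a trafficker appearing in the new cut may be doing double duty (blocking both the new path through $v$ and old paths through its existing victims that some other node blocked before), in which case it cannot simply be dropped in your exchange. Your proposed partition of $I$ handles this only at the sketch level; to close it you would either need to charge the removed nodes' capacities against the added traffickers' capacities explicitly, or, as the paper does, anchor the argument to the residual-reachability cut so that ``not brought in'' just means ``still reachable from $\alpha$,'' which follows immediately once $v$ is saturated by a single unit of flow.
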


\begin{proof}
Suppose that, with restructuring plan $k$ in time period $\bar{t}-1$, there are two non-interdicted trafficker nodes where $(i,v) \in A^{k, rec}$ and $(j,v) \in A^{k, rec}$, and $\psi^k_{ivt} = \psi^k_{jvt} = 1$. Let $f^k_{\bar{t}-1}$ be the maximum flow in through the network with restructuring plan $k$ at time period $\bar{t}-1$ as determined by the Ford-Fulkerson algorithm, and let $(\bar{\pi}^k_{\bar{t}-1}, \bar{\theta}^k_{\bar{t}-1})$ be the minimum cut in time period $\bar{t}-1$ as determined by the residual network $D_f$, and suppose that $\theta^k_{i(\bar{t}-1)}=\theta^k_{j(\bar{t}-1)}=0$. Thus, there are at least two $\alpha-\omega$ augmenting paths in time period $\bar{t}$, $(\alpha, i, v, \omega)$ and $(\alpha, j, v, \omega)$. Only one augmenting path through $v$ can be chosen, since $u_v=1$ will be amount of flow sent along the chosen path due to node and arc capacities being integral. Thus, if $j$ is not in the chosen path, then whether or not $j$ can be reached by $\alpha$ will not be impacted by the inclusion of arc $(j,v)$ after flow is sent along the augmenting path using $i$. Thus, at most one trafficker node will be included in the minimum cut after including arcs $(i,v)$ and $(j,v)$.
\end{proof}

Since at most one $i \in T$ will be included in the minimum cut, regardless of the number of arcs to recruitable victims that are ``included" via $\psi$ variables, we can construct a feasible restructuring plan. If there were to be a change in the minimum cut after ``including" an arc $(i,j)$ via $\psi^k_{ijt}=1$, then $(i,j)$ would be the arc that would be restructured. If no such change occurs, then any one arc can be chosen and the minimum cut will only increase by $1$, regardless of which arc is chosen. Removing the extra arcs introduced by $\psi$ will not impact the feasibility of any of the restructuring plans. We also note that, the only arcs that are added after the projected recruitment arcs will be restructuring to a back-up trafficker, promoting a victim to be the new bottom, and assigning the newly promoted bottom more victims. For a trafficker that was not interdicted, these arcs do not increase the total number of victims a trafficker can reach. Thus, if a victim is recruited into a given organization, no arcs restructured after their recruitment will cause them to be recruited into a different organization, and thus a feasible restructuring plan can be constructed. This augmentation can be implemented in both the standard model and simplified upfront interdiction models. To implement the prospective recruitment constraints in the network phase model, we would need to introduce an extra phase starting at $\bar{t}$ if no such phase where to exist.

\section{Computational Results}
\label{sec:results}
We implement the models derived for Algorithm \ref{alg:cncg_MMFNIPR} in AMPL with Gurobi 9.5 as the solver. Experiments were conducted on a laptop with an Intel\textsuperscript{\textregistered} Core\textsuperscript{TM} i5-8250 CPU @ 1.6 GHz - 1.8 GHz and 16 GB RAM running Windows 10. Each instance is limited to a run time of $2$ hours.

We test out model using $\tau = 7$ on the networks used in \cite{kosmas2022generating}. The networks used in \cite{kosmas2022generating} were the product of a network generator that was informed by qualitative literature, federal case file analysis, analyses of key stakeholder interviews and a secondary analysis of previously conducted interviews, and the generator was further validated by domain experts and a survivor-centered advisory board. Each network consists of $5$ single-trafficker operations. These networks are similar in size and structure to the networks presented in \cite{cockbain2018offender}. However, we note that this is an exploratory analysis, and that more analysis is needed to provide tailored recommendations for how different practitioners may want to apply our model. Table \ref{tab:netstats} reports the number of nodes, as well as the number of each type of node, in each network. The data used in this work is available from the corresponding author upon request.

\begin{table}[h]
    \centering
    \resizebox{\linewidth}{!}{\begin{tabular}{|c|c|c|c|c|}
        \hline
        Network & Number of Nodes & Number of Traffickers & Number of Bottoms & Number of Victims \\
        \hline
         1 & 28 & 5 & 3 & 20 \\
         \hline
         2 & 32 & 5 & 4 & 23 \\
         \hline
         3 & 35 & 5 & 5 & 25 \\
         \hline
         4 & 35 & 5 & 4 & 26\\
         \hline
         5 & 27 & 5 & 4 & 18\\
         \hline
    \end{tabular}}
    \caption{Sizes of generated networks from \cite{kosmas2022generating}}
    \label{tab:netstats}
\end{table}

As in \cite{kosmas2022generating}, we choose the cost to interdict a victim to be $2$, the cost to interdict a bottom to be $4$, and the cost to interdict a trafficker to be $8$. Interdicting a victim reduces the cost to interdict their trafficker by $1$, while interdicting a bottom reduces the cost by $3$, to a minimum of $4$. In the models with delayed interdiction, interdicting a victim takes $1$ time period, interdicting a bottom takes $2$ time periods, and interdicting a trafficker takes $3$ time periods. In models with a second attacker, the second attacker has a fixed budget of $10$, where interdicting a victim already in the trafficking network costs $3$, and preventing a victim from being recruited costs $1$.

Each trafficker may restructure up to $4$ arcs in their operation, while each victim may restructure $1$ arc if their trafficker is interdicted. Restructuring to a victim currently in the trafficking network takes $1$ time period, and recruiting a new victim takes $2$ time periods. A trafficker taking a victim from their bottom or assigning a victim to their bottom also takes $1$ time period. Trafficking operations with at least $4$ victims (including a bottom, if the operation has one) will have a back-up trafficker, and it takes $2$ time periods to restructure to them. For operations with a bottom, the set of victims that can be promoted to be the new bottom is determined by randomly selecting half of the victims the trafficker is adjacent to (rounding up), and promoting a victim to be a bottom takes $3$ time periods. We set the number of recruitable victims to be $40\%$ of the number of victims in the network, as was done in \cite{kosmas2022generating}.

\subsection{Comparison of Solution Methods}
We first compare the solve times of Algorithm \ref{alg:cncg_MMFNIPR} with and without the augmentations for models with a single attacker. Table \ref{tbl:compareTimeBaseD} compares the solve times for delayed interdictions. Table \ref{tbl:compareTimeBaseU} compares solves times for the base formulation of upfront interdiction, and Table \ref{tbl:compareTimeBaseUS} compares the solve times for the network phase formulation of upfront interdiction. In each table, we report the solve time, as well as the number of restructuring plans visited (in parentheses). We note that the number of plans visited is equivalent to the number of iterations of the C\&CG algorithm. Unsolved instances are marked with an $*$ and bold entries indicate which method solved the instance faster.

\begin{sidewaystable}[]
\begin{center}
\resizebox{\textwidth}{!}{
\begin{tabular}{|c|c|c|c|c|c|c|c|c|c|c|}
\hline
Budget & Data1, Base   & Data1, Aug            & Data2, Base   & Data2, Aug             & Data3, Base   & Data3, Aug             & Data4, Base   & Data4, Aug           & Data5, Base  & Data5, Aug           \\ \hline
8      & 7.328 (6)     & \textbf{3.313 (4)}    & 96.312 (14)   & \textbf{41.484 (8)}    & 146.703 (12)  & \textbf{48.812 (8)}    & 430.016 (17)  & \textbf{36.453 (4)}  & 0.812 (3)    & \textbf{0.453 (2)}   \\ \hline
12     & 318.421 (19)  & \textbf{43.656 (8)}   & 764.110 (17)  & \textbf{61.922 (6)}    & 7200* (32)    & \textbf{4047.500 (18)} & 853.891 (15)  & \textbf{224.391 (6)} & 0.891 (3)    & \textbf{0.484 (2)}   \\ \hline
16     & 18.375 (6)    & \textbf{16.187 (5)}   & 1871.375 (19) & \textbf{122.703 (6)}   & 7200* (20)    & \textbf{1783.781 (9)}  & 373.859 (10)  & \textbf{45.937 (3)}  & 39.031 (11)  & \textbf{27.719 (10)} \\ \hline
20     & 119.782 (11)  & \textbf{45.516 (6)}   & 2663.765 (18) & \textbf{418.422 (8)}   & 2665.813 (12) & \textbf{129.266 (4)}   & 316.156 (9)   & \textbf{83.750 (4)}  & 12.750 (7)   & \textbf{5.656 (4)}   \\ \hline
24     & 20.157 (6)    & \textbf{6.860 (4)}    & 2166.250 (15) & \textbf{916.703 (8)}   & 7200* (15)    & \textbf{4757.234 (9)}  & 3631.657 (15) & \textbf{788.032 (8)} & 11.656 (6)   & \textbf{3.344 (3)}   \\ \hline
28     & 283.906 (12)  & \textbf{33.015 (5)}   & 7200* (18)    & \textbf{6889.297 (14)} & 7200* (14)    & \textbf{2707.797 (9)}  & 4164.390 (15) & \textbf{677.562 (8)} & 232.110 (16) & \textbf{57.266 (8)}  \\ \hline
32     & 3101.609 (28) & \textbf{699.203 (12)} & 7200* (14)    & \textbf{336.813 (5)}   & 7200* (14)    & \textbf{5753.937 (10)} & 3811.922 (14) & \textbf{643.750 (8)} & 719.649 (18) & \textbf{85.296 (8)}  \\ \hline
36     & 616.703 (17)  & \textbf{441.172 (9)}  & 7200* (16)    & \textbf{3680.265 (12)} & 7200* (12)    & \textbf{399.141 (5)}   & 241.766 (7)   & \textbf{54.219 (4)}  & 169.546 (11) & \textbf{23.579 (6)}  \\ \hline
40     & 237.422 (17)  & \textbf{31.594 (6)}   & 6058.062 (18) & \textbf{1031.157 (10)} & 985.781 (8)   & \textbf{74.172 (4)}    & 1101.593 (10) & \textbf{420.906 (6)} & 186.516 (13) & \textbf{20.937 (5)}  \\ \hline
\end{tabular}}
\caption{Comparison of run times (seconds) and number of plans visited of algorithm \ref{alg:cncg_MMFNIPR} for delayed interdictions with and without augmentations, one attacker}
\label{tbl:compareTimeBaseD}

\vspace{2\baselineskip}
\resizebox{\textwidth}{!}{
\begin{tabular}{|c|c|c|c|c|c|c|c|c|c|c|}
\hline
Budget & Data1, Base        & Data1, Aug            & Data2, Base   & Data2, Aug             & Data3, Base   & Data3, Aug             & Data4, Base   & Data4, Aug            & Data5, Base   & Data5, Aug            \\ \hline
8      & 2.719 (5)          & \textbf{1.312 (3)}    & 19.812 (9)    & \textbf{9.157 (4)}     & 390.796 (25)  & \textbf{119.454 (9)}   & 1803.500 (27) & \textbf{495.203 (6)}  & 0.781 (3)     & \textbf{0.515 (2)}    \\ \hline
12     & \textbf{4.687 (5)} & 6.875 (5)             & 774.750 (22)  & \textbf{47.500 (5)}    & 4644.547 (31) & \textbf{1191.187 (14)} & \textbf{492.172 (16)}  & 526.922 (6)  & 1.735 (4)     & \textbf{0.532 (2)}    \\ \hline
16     & 2.750 (3)          & \textbf{2.062 (3)}    & 276.156 (14)  & \textbf{6.453 (3)}     & 5407.688 (27) & \textbf{191.313 (7)}   & 318.343 (13)  & \textbf{128.219 (5)}  & 25.344 (12)   & \textbf{5.187 (5)}    \\ \hline
20     & 76.485 (12)        & \textbf{5.907 (4)}    & 2759.157 (22) & \textbf{93.922 (7)}    & 1967.437 (19) & \textbf{575.296 (6)}   & 239.688 (11)  & \textbf{92.391 (4)}   & 40.015 (13)   & \textbf{3.344 (3)}    \\ \hline
24     & 28.140 (7)         & \textbf{6.625 (3)}    & 3071.187 (21) & \textbf{44.328 (4)}    & 7200* (22)    & \textbf{4597.032 (10)} & 3712.719 (20) & \textbf{1016.203 (8)} & 11.781 (6)    & \textbf{1.797 (2)}    \\ \hline
28     & 166.532 (13)       & \textbf{37.218 (6)}   & 6786.156 (27) & \textbf{874.359 (8)}   & 7200* (19)    & \textbf{3297.032 (13)} & 2710.734 (17) & \textbf{261.094 (5)}  & 125.844 (14)  & \textbf{14.093 (5)}   \\ \hline
32     & 1823.703 (28)      & \textbf{530.125 (14)} & 2172.750 (16) & \textbf{208.234 (6)}   & 7200* (19)    & \textbf{2789.437 (9)}  & 3801.25 (18)  & \textbf{254.750 (5)}  & 1135.031 (25) & \textbf{200.625 (14)} \\ \hline
36     & 284.125 (14)       & \textbf{98.782 (7)}   & 3969.797 (18) & \textbf{1945.579 (9)}  & 7200* (17)    & \textbf{513.078 (5)}   & 1825.578 (13) & \textbf{86.031 (4)}   & 408.438 (17)  & \textbf{101.500 (9)}  \\ \hline
40     & 345.781 (16)       & \textbf{22.750 (7)}   & 7200* (19)    & \textbf{3442.734 (12)} & 3256.641 (13) & \textbf{507.516 (5)}   & 7200* (17)    & \textbf{917.297 (8)}  & 385.359 (18)  & \textbf{61.985 (8)}   \\ \hline
\end{tabular}}
\end{center}
\caption{Comparison of run times (seconds) and number of plans visited of algorithm \ref{alg:cncg_MMFNIPR} for the base formulation with upfront interdictions with and without augmentations, one attacker}
\label{tbl:compareTimeBaseU}
\end{sidewaystable}

\begin{sidewaystable}[]
\begin{center}
\resizebox{\textwidth}{!}{
\begin{tabular}{|c|c|c|c|c|c|c|c|c|c|c|}
\hline
Budget & Data1, Base        & Data1, Aug            & Data2, Base   & Data2, Aug            & Data3, Base   & Data3, Aug            & Data4, Base           & Data4, Aug           & Data5, Base        & Data5, Aug          \\ \hline
8      & 2.094 (6)          & \textbf{0.891 (3)}    & 8.953 (10)    & \textbf{2.250 (4)}    & 71.203 (19)   & \textbf{29.531 (8)}   & 765.563 (24)          & \textbf{65.765 (5)}  & \textbf{0.266 (2)} & 0.312 (2)           \\ \hline
12     & \textbf{2.203 (5)} & 2.344 (4)             & 169.360 (20)  & \textbf{16.891 (4)}   & 868.422 (28)  & \textbf{230.922 (12)} & \textbf{166.172 (15)} & 228.344 (8)          & 0.875 (4)          & \textbf{0.360 (2)}  \\ \hline
16     & \textbf{0.828 (3)} & 1.140 (3)             & 107.765 (16)  & \textbf{3.031 (3)}    & 1647.062 (25) & \textbf{122.203 (7)}  & 102.234 (12)          & \textbf{40.438 (5)}  & 5.969 (10)         & \textbf{1.922 (4)}  \\ \hline
20     & 35.562 (12)        & \textbf{2.422 (4)}    & 1021.813 (22) & \textbf{22.641 (6)}   & 691.657 (19)  & \textbf{116.187 (6)}  & 81.281 (11)           & \textbf{30.562 (4)}  & 13.125 (10)        & \textbf{1.343 (3)}  \\ \hline
24     & 6.422 (6)          & \textbf{3.438 (4)}    & 742.937 (19)  & \textbf{33.484 (5)}   & 7200* (30)    & \textbf{727.485 (10)} & 1443.250 (20)         & \textbf{166.578 (7)} & 8.375 (7)          & \textbf{0.610 (2)}  \\ \hline
28     & 40.844 (11)        & \textbf{22.125 (7)}   & 2382.922 (25) & \textbf{114.781 (6)}  & 7200* (26)    & \textbf{420.937 (9)}  & 1007.984 (18)         & \textbf{57.828 (5)}  & 100.375 (17)       & \textbf{6.375 (5)}  \\ \hline
32     & 279.938 (22)       & \textbf{148.234 (12)} & 1057.359 (18) & \textbf{55.313 (5)}   & 7200* (24)    & \textbf{556.641 (9)}  & 947.078 (18)          & \textbf{85.594 (5)}  & 377.656 (23)       & \textbf{31.703 (8)} \\ \hline
36     & 80.093 (14)        & \textbf{13.438 (5)}   & 1031.610 (16) & \textbf{345.234 (9)}  & 5662.719 (22) & \textbf{115.781 (5)}  & 378.156 (13)          & \textbf{34.031 (5)}  & 285.000 (21)       & \textbf{21.515 (6)} \\ \hline
40     & 118.047 (17)       & \textbf{14.578 (7)}   & 7200* (32)    & \textbf{452.969 (11)} & 2914.656 (17) & \textbf{96.766 (5)}   & 2294.422 (19)         & \textbf{340.813 (9)} & 157.781 (19)       & \textbf{31.672 (8)} \\ \hline
\end{tabular}
}
\end{center}
\caption{Comparison of run times (seconds) and number of plans visited of algorithm \ref{alg:cncg_MMFNIPR} for the network phase formulation with upfront interdictions with and without augmentations, one attacker}
\label{tbl:compareTimeBaseUS}
\end{sidewaystable}

From these tables, we can see that, in the models with a single attacker, the modeling-based augmentations allow the C\&CG algorithm to almost always solve the problem significantly quicker for all formulations. Out of $135$ instances tested, only $6$ instances are solved faster without the augmentations. In all but two of these instances, the solve time for both methods is under $10$ seconds, indicating that those instances can already be easily solved. For all the instances for delayed interdictions, the method with augmentations always outperform the base method. The improvement in solve time by including the augmentations is quite significant. In $114$ out of $135$ instances, the solve time of the method with augmentations is at most $50\%$ that of the base method, and in $39$ out of $135$ instances, the solve time is at most $10\%$ that of the base method. Additionally, the base method is unable to solve 19 instances, whereas the method with augmentations solves every instance. This improvement is due to requiring significantly fewer iterations to solve the problem when the augmentations are included. Although the augmentations increase the size of the master problem, the augmentations were able to provide higher quality lower bounds in each iteration, thus solving the overall problem significantly quicker. In comparing Tables \ref{tbl:compareTimeBaseU} and \ref{tbl:compareTimeBaseUS}, we can additionally see that the network phase formulation for upfront interdiction drastically outperforms the base formulation. Every instance that both formulations solve is solved quicker with the network phase formulation, with $68$ out of $90$ instances being solved in at most $50\%$ of the solve time of the base formulation. Additionally, the network phase formulation solves two more instances than the base formulation.

We now present our results for the models with two cooperating attackers. Table \ref{tbl:compareTimeBaseD2} compares solve times with delayed interdictions, Table \ref{tbl:compareTimeBaseU2} compares solves times for the base formulation of upfront interdiction, and Table \ref{tbl:compareTimeBaseUS2} compares the solve times for the network phase formulation of upfront interdiction.

\begin{sidewaystable}[]
\begin{center}
\resizebox{\textwidth}{!}{
\begin{tabular}{|c|c|c|c|c|c|c|c|c|c|c|}
\hline
Budget & Data1, Base         & Data1, Aug           & Data2, Base            & Data2, Aug             & Data3, Base           & Data3, Aug           & Data4, Base          & Data4, Aug             & Data5, Base         & Data5, Aug          \\ \hline
8      & 203.547 (12)        & \textbf{110.922 (7)} & 224.718 (13)           & \textbf{84.875 (6)}    & 566.860 (11)          & \textbf{524.313 (7)} & 64.922 (6)           & \textbf{28.281 (4)}    & \textbf{30.219 (9)} & 62.968 (8)          \\ \hline
12     & 147.344 (10)        & \textbf{86.906 (6)}  & 1331.750 (16)          & \textbf{1044.469 (10)} & \textbf{527.109 (10)} & 567.937 (7)          & 124.734 (6)          & \textbf{114.157 (5)}   & 46.922 (10)         & \textbf{16.485 (5)} \\ \hline
16     & \textbf{9.047 (5)}  & 9.532 (4)            & 361.203 (8)            & \textbf{218.828 (6)}   & \textbf{923.531 (8)}  & 3700.844 (10)        & \textbf{46.657 (5)}  & 164.375 (6)            & 79.687 (11)         & \textbf{8.687 (4)}  \\ \hline
20     & 95.140 (9)          & \textbf{16.312 (4)}  & \textbf{809.766 (13)}  & 2674.500 (13)          & 1369.375 (8)          & \textbf{349.516 (6)} & 2296.250 (18)        & \textbf{947.781 (8)}   & 19.562 (7)          & \textbf{7.328 (3)}  \\ \hline
24     & 24.125 (7)          & \textbf{17.141 (5)}  & 7200* (18)             & 7200* (13)             & 3920.375 (14)         & \textbf{504.828 (7)} & 7200* (14)           & 7200 (11)*             & \textbf{37.766 (8)} & 50.032 (8)          \\ \hline
28     & \textbf{16.813 (6)} & 25.390 (6)           & 7200* (26)             & \textbf{5060.297 (11)} & \textbf{1148.625 (9)} & 1748.016 (9)         & 595.468 (8)          & \textbf{525.344 (7)}   & 92.422 (8)          & \textbf{69.859 (6)} \\ \hline
32     & 66.515 (11)         & \textbf{52.094 (8)}  & \textbf{1509.125 (18)} & 2249.719 (11)          & 3430.563 (15)         & \textbf{342.312 (6)} & 3707.141 (14)        & \textbf{3673.765 (12)} & 14.235 (8)          & \textbf{2.375 (2)}  \\ \hline
36     & \textbf{24.344 (7)} & 30.531 (7)           & \textbf{116.140 (6)}   & 158.922 (7)            & 1557.259 (12)         & \textbf{148.578 (5)} & \textbf{407.844 (9)} & 749.954 (9)            & 3.937 (4)           & \textbf{2.250 (3)}  \\ \hline
40     & \textbf{2.500 (3)}  & 5.782 (4)            & \textbf{32.094 (6)}    & 571.031 (12)           & \textbf{32.469 (4)}   & 34.719 (4)           & 7200* (21)           & 7200* (17)             & \textbf{2.500 (3)}  & 2.562 (3)           \\ \hline
\end{tabular}}
\caption{Comparison of run times (seconds) and number of plans visited of algorithm \ref{alg:cncg_MMFNIPR} for delayed interdictions with and without augmentations, two attackers}
\label{tbl:compareTimeBaseD2}

\vspace{2\baselineskip}
\resizebox{\textwidth}{!}{
\begin{tabular}{|c|c|c|c|c|c|c|c|c|c|c|}
\hline
Budget & Data1, Base  & Data1, Aug            & Data2, Base           & Data2, Aug             & Data3, Base            & Data3, Aug            & Data4, Base   & Data4, Aug             & Data5, Base         & Data5, Aug          \\ \hline
8      & 77.078 (9)   & \textbf{39.297 (6)}   & 92.656 (10)           & \textbf{50.157 (5)}    & \textbf{383.265 (14)}  & 495.062 (7)           & 17.812 (6)    & \textbf{10.563 (3)}    & \textbf{11.937 (7)} & 52.343 (7)          \\ \hline
12     & 64.329 (9)   & \textbf{34.172 (6)}   & 1783.781 (18)         & \textbf{173.828 (6)}   & 263.438 (9)            & \textbf{202.204 (6)}  & 492.625 (9)   & \textbf{176.500 (4)}   & 7.125 (4)           & \textbf{2.860 (2)}  \\ \hline
16     & 27.484 (6)   & \textbf{7.734 (4)}    & 790.297 (10)          & \textbf{352.578 (8)}   & 7200* (15)             & \textbf{1095.687 (6)} & 275.500 (8)   & \textbf{50.250 (4)}    & 42.891 (10)         & \textbf{6.672 (3)}  \\ \hline
20     & 153.922 (11) & \textbf{37.281 (5)}   & 4496.516 (23)         & \textbf{390.344 (8)}   & 1889.781 (12)          & \textbf{458.078 (6)}  & 5703.500 (18) & \textbf{3395.328 (11)} & \textbf{8.047 (4)}  & 10.281 (4)          \\ \hline
24     & 37.953 (8)   & \textbf{12.672 (4)}   & 7200* (21)            & 7200* (13)             & 3934.318 (17)          & \textbf{2142.922 (9)} & 6817.485 (21) & \textbf{1811.766 (10)} & 59.594 (6)          & \textbf{6.375 (3)}  \\ \hline
28     & 86.469 (9)   & \textbf{48.203 (6)}   & 7200* (30)            & \textbf{6942.609 (20)} & \textbf{3491.125 (13)} & 3576.328 (8)          & 2413.125 (14) & \textbf{1128.203 (8)}  & 154.578 (17)        & \textbf{42.953 (6)} \\ \hline
32     & 203.562 (14) & \textbf{14.313 (4)}   & 2200.047 (18)         & \textbf{1454.110 (11)} & 2365.125 (13)          & \textbf{897.406 (8)}  & 2729.718 (14) & \textbf{486.703 (6)}   & 9.531 (4)           & \textbf{5.563 (3)}  \\ \hline
36     & 238.141 (15) & \textbf{100.015 (10)} & 818.328 (15)          & \textbf{351.484 (8)}   & 890.188 (10)           & \textbf{284.032 (6)}  & 826.594 (11)  & \textbf{181.890 (5)}   & 11.703 (5)          & \textbf{4.328 (4)}  \\ \hline
40     & 115.562 (20) & \textbf{86.985 (16)}  & \textbf{507.312 (13)} & 1362.281 (12)          & 377.203 (9)            & \textbf{64.078 (4)}   & 7200* (24)    & \textbf{2704.344 (11)} & \textbf{10.313 (4)} & 10.640 (6)          \\ \hline
\end{tabular}}
\end{center}
\caption{Comparison of run times (seconds) and number of plans visited of algorithm \ref{alg:cncg_MMFNIPR} for the base formulation with upfront interdictions with and without augmentations, two attackers}
\label{tbl:compareTimeBaseU2}
\end{sidewaystable}

\begin{sidewaystable}[]
\begin{center}
\resizebox{\textwidth}{!}{
\begin{tabular}{|c|c|c|c|c|c|c|c|c|c|c|}
\hline
Budget & Data1, Base         & Data1, Aug           & Data2, Base   & Data2, Aug             & Data3, Base           & Data3, Aug            & Data4, Base         & Data4, Aug             & Data5, Base        & Data5, Aug          \\ \hline
8      & 51.515 (10)         & \textbf{9.781 (6)}   & 101.985 (11)  & \textbf{16.375 (5)}    & 185.360 (12)          & \textbf{119.578 (6)}  & \textbf{7.8282 (5)} & 13.531 (4)             & 3.750 (7)          & \textbf{3.438 (5)}  \\ \hline
12     & 39.922 (10)         & \textbf{22.235 (7)}  & 628.718 (18)  & \textbf{61.641 (5)}    & 115.203 (10)          & \textbf{44.016 (4)}   & 250.797 (11)        & \textbf{67.615 (5)}    & 10.110 (8)         & \textbf{2.765 (3)}  \\ \hline
16     & \textbf{4.297 (4)}  & 9.250 (5)            & 242.766 (12)  & \textbf{45.406 (5)}    & 1466.735 (16)         & \textbf{570.734 (8)}  & 110.766 (8)         & \textbf{13.219 (3)}    & 52.422 (11)        & \textbf{4.735 (3)}  \\ \hline
20     & 55.859 (11)         & \textbf{7.187 (4)}   & 576.250 (15)  & \textbf{203.750 (10)}  & 884.078 (12)          & \textbf{240.141 (6)}  & 2753.535 (21)       & \textbf{1138.469 (13)} & 5.843 (5)          & \textbf{4.828 (4)}  \\ \hline
24     & 19.766 (9)          & \textbf{6.250 (4)}   & 2452.266 (22) & \textbf{2021.250 (15)} & \textbf{624.765 (12)} & 662.359 (10)          & 1893.531 (19)       & \textbf{487.640 (9)}   & 57.782 (9)         & \textbf{5.140 (4)}  \\ \hline
28     & 52.516 (16)         & \textbf{5.250 (4)}   & 3102.656 (31) & \textbf{1410.938 (16)} & 1220.032 (15)         & \textbf{817.469 (10)} & 454.047 (12)        & \textbf{169.297 (6)}   & 90.125 (16)        & \textbf{22.141 (7)} \\ \hline
32     & 122.687 (16)        & \textbf{4.860 (4)}   & 1581.187 (25) & \textbf{632.562 (13)}  & 1317.968 (17)         & \textbf{169.390 (6)}  & 497.828 (11)        & \textbf{245.109 (7)}   & 4.359 (5)          & \textbf{3.437 (3)}  \\ \hline
36     & \textbf{22.860 (9)} & 61.578 (11)          & 220.891 (12)  & \textbf{131.797 (8)}   & 395.188 (12)          & \textbf{115.219 (6)}  & 294.079 (11)        & \textbf{80.360 (5)}    & \textbf{1.422 (3)} & 2.563 (3)           \\ \hline
40     & 54.390 (18)         & \textbf{47.125 (14)} & 385.656 (15)  & \textbf{98.234 (7)}    & 59.390 (7)            & \textbf{19.172 (4)}   & 4905.890 (29)       & \textbf{949.578 (12)}  & 23.312 (12)        & \textbf{1.375 (2)}  \\ \hline
\end{tabular}
}
\end{center}
\caption{Comparison of run times (seconds) and number of plans visited of algorithm \ref{alg:cncg_MMFNIPR} for the network phase formulation with upfront interdictions with and without augmentations, two attackers}
\label{tbl:compareTimeBaseUS2}
\end{sidewaystable}

Here, we see that, while the method with augmentations is less effective with two attackers than with one, it often solves the problem faster than the method without augmentations. We expect that the augmentations would be less effective in these instances, since the second attacker can prevent the recruitment of new participants, which limits the benefits of the augmentation that projects the recruitment of new participants. Now, $28$ of the instances are solved faster without including augmentations. In instances with two attackers, the base method is unable to solve $8$ instances, and the method with augmentations is unable to solve $4$ instances. We note that every instance that is solved by the base method is also solved by the method with augmentations. Comparing Tables \ref{tbl:compareTimeBaseU2} and \ref{tbl:compareTimeBaseUS2}, we again see significant benefits from the network phase formulation for upfront interdiction. The base formulation only outperforms the network phase formulation in $3$ instances, all of which are solved in under $60$ seconds. For the network phase formulation, $54$ of the $90$ instances are solved in at most $50\%$ of the solve time of the base formulation. Additionally, the network phase formulation is able to solve every instance, where the base formulation is unable to solve $5$ instances.

We now compare the quality of the bounds determined in unsolved instances. For instances with delayed interdiction for one attacker and two attackers, we report the relative optimality gap, the difference between the upper and lower bound divided by the upper bound, in Tables \ref{tbl:relgapD} and \ref{tbl:relgapD2}. For instances with upfront interdiction for one attacker and two attackers, we report the relative optimality gap in Tables \ref{tbl:relgapU} and \ref{tbl:relgapU2}. These results show that, with one exception, the quality of the bounds identified using the method with augmentations are higher quality than the base method, and that the bounds identified by the network phase model for upfront interdiction are higher quality than the base model. Because the models with augmentations both solve more instances and identify higher quality solutions in unsolved instances, the inclusion of these augmentations better ensures that the recommendations provided by our model appropriately account for how the traffickers will react to those decisions.

\begin{table}[h!]
\begin{center}
{\scriptsize %
\begin{tabular}{|c|c|c|}
\hline
Instance          & Base    & Aug \\ \hline
Data2, Budget 28  & 5.469\% & -   \\ \hline
Data2, Budget 32 & 8.333\% & -   \\ \hline
Data2, Budget 36  & 7.207\% & -   \\ \hline
Data3, Budget 12  & 0.562\% & -   \\ \hline
Data3, Budget 16  & 2.907\% & -   \\ \hline
Data3, Budget 24  & 1.987\% & -   \\ \hline
Data3, Budget 28  & 3.471\% & -   \\ \hline
Data3, Budget 32  & 3.008\% & -   \\ \hline
Data3, Budget 36  & 4.800\% & -   \\ \hline
\end{tabular} 
}%
\caption{Relative optimality gap of unsolved instances with delayed interdiction, one attacker}
\label{tbl:relgapD}
\end{center}
\end{table}

\begin{table}[h!]
\begin{center}
{\scriptsize %
\begin{tabular}{|c|c|c|}
\hline
Instance          & Base    & Aug     \\ \hline
Data2, Budget 24  & 3.604\% & 0.926\% \\ \hline
Data2, Budget 28 & 3.030\% & -       \\ \hline
Data4, Budget 24  & 2.326\% & 3.817\% \\ \hline
Data4, Budget 40  & 5.495\% & 1.149\% \\ \hline
\end{tabular}
}%
\caption{Relative optimality gap of unsolved instances with delayed interdiction, two attackers}
\label{tbl:relgapD2}
\end{center}
\end{table}

\begin{table}[h!]
\begin{center}
{\scriptsize %
\begin{tabular}{|c|c|c|c|c|}
\hline
Instance         & Base     & Aug & Phase   & Phase Aug \\ \hline
Data2, Budget 40 & 7.059\%  & -   & 2.439\% & -         \\ \hline
Data3, Budget 24 & 2.837\%  & -   & 2.817\% & -         \\ \hline
Data3, Budget 28 & 8.696\%  & -   & 4.511\% & -         \\ \hline
Data3, Budget 32 & 8.730\%  & -   & 3.306\% & -         \\ \hline
Data3, Budget 36 & 13.333\% & -   & -       & -         \\ \hline
Data4, Budget 40 & 7.143\%  & -   & -       & -         \\ \hline
\end{tabular}
}%
\caption{Relative optimality gap of unsolved instances with upfront interdiction, one attacker}
\label{tbl:relgapU}
\end{center}
\end{table}

\begin{table}[h!]
\begin{center}
{\scriptsize %
\begin{tabular}{|c|c|c|c|c|}
\hline
Instance          & Base     & Aug    & Phase & Phase Aug \\ \hline
Data2, Budget 24  & 5.000\%  & 4.00\% & -     & -         \\ \hline
Data2, Budget 28 & 4.651\%  & -      & -     & -         \\ \hline
Data3, Budget 16  & 5.479\%  & -      & -     & -         \\ \hline
Data4, Budget 40  & 1.471 \% & -      & -     & -         \\ \hline
\end{tabular}
}%
\caption{Relative optimality gap of unsolved instances with upfront interdiction, two attackers}
\label{tbl:relgapU2}
\end{center}
\end{table}

\subsection{Application-Based Analysis}
We now compare the policy recommendations of a multi-period MFNIP model without restructuring against those recommended by MP-MFNIP-R. For unsolved instances, we present the solution associated with the upper bound from the model and method that resulted in the smallest upper bound. This is because this solution is a bilevel feasible solution, and thus is what we want to minimize. We present the results of two sample networks, network $1$ and $3$, as the other three follow similar trends to one of these two; those results are included in Appendix \ref{app:fullresults}. We first present the results on flow through the network. Figures \ref{fig:flowplots_d_1} and \ref{fig:flowplots_d_2} present the flows with delayed interdictions with one and two attackers, respectively. Figures \ref{fig:flowplots_u_1} and \ref{fig:flowplots_u_2} present the flows with upfront interdictions with one and two attackers, respectively. In these figures, the black dotted line represents the total flow through the un-interdicted network, which is the total number of victims (bottoms included) multiplied by the number of time periods. The blue circles are the optimal flow as determined by multi-period MFNIP, and the red stars are the optimal flow after restructuring responding to multi-period MFNIP's recommended interdictions. The green triangles are the optimal flow as determined by MP-MFNIP-R.

\begin{figure}[h!]
\centering
\begin{subfigure}{.45\textwidth}
  \centering
  \includegraphics[width=\linewidth]{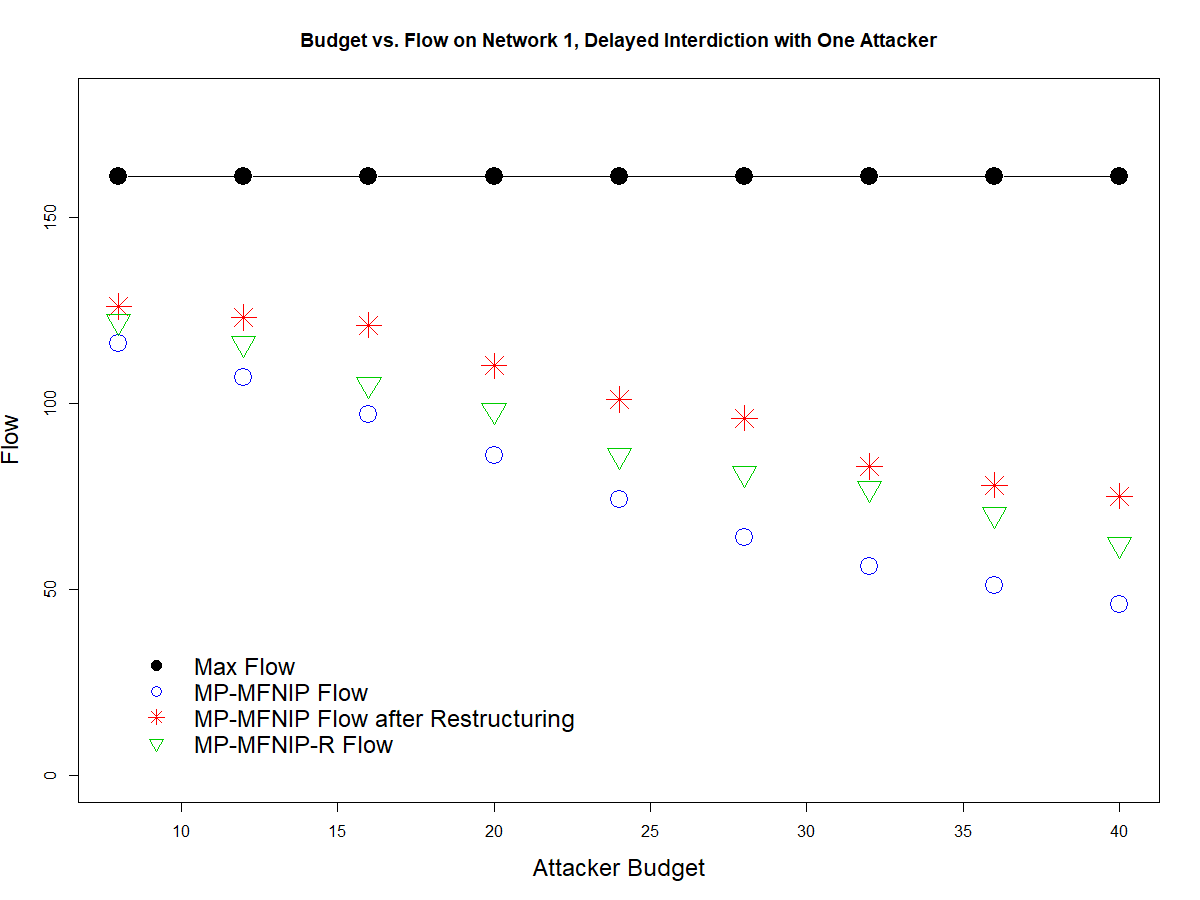}
  \caption{Network 1}
  \label{fig:net1_d_1_flow}
\end{subfigure}%
\begin{subfigure}{.45\textwidth}
  \centering
  \includegraphics[width=\linewidth]{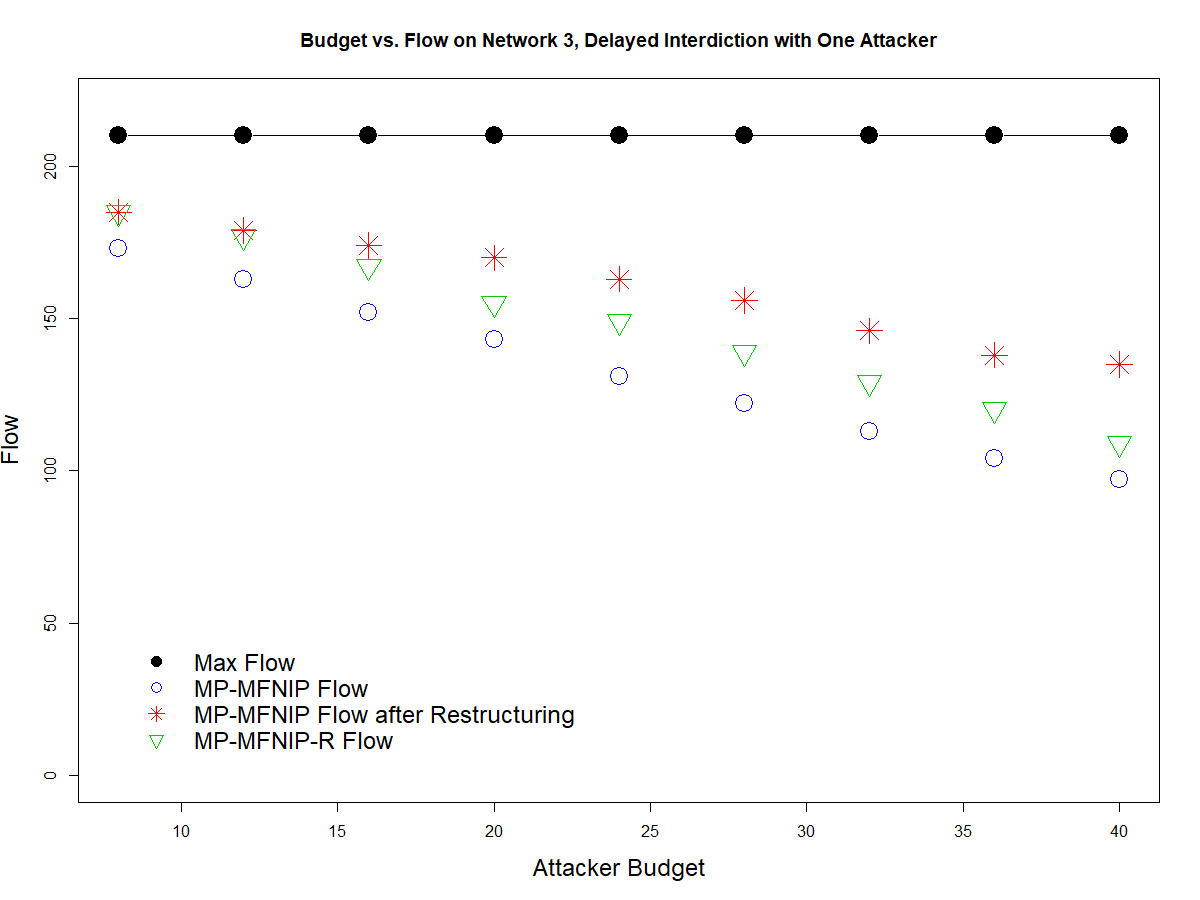}
  \caption{Network 3}
  \label{fig:net3_d_1_flow}
\end{subfigure}
\caption{Interdicted and restructured flows with delayed interdiction and one attacker over varying attacker budgets}
\label{fig:flowplots_d_1}
\end{figure}

\begin{figure}[h!]
\centering
\begin{subfigure}{.45\textwidth}
  \centering
  \includegraphics[width=\linewidth]{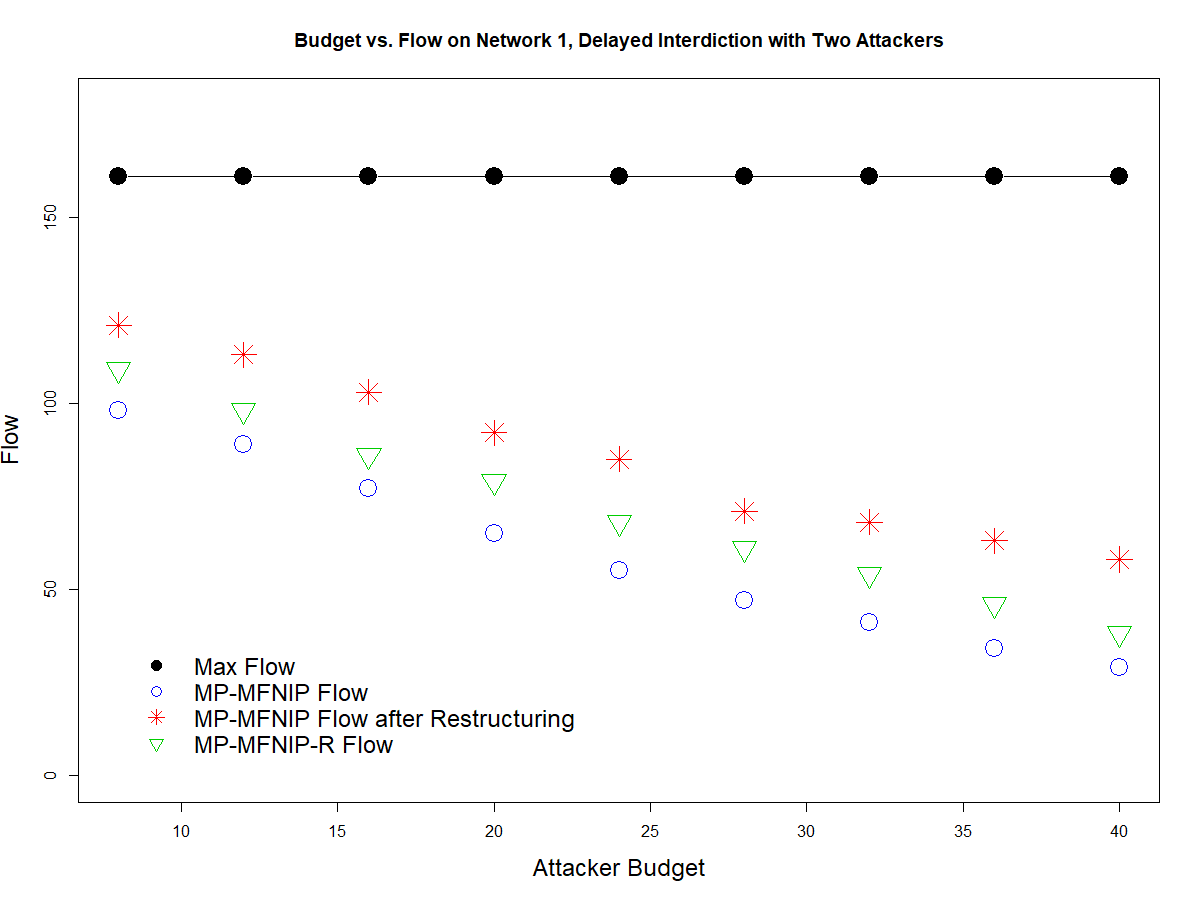}
  \caption{Network 1}
  \label{fig:net1_d_2_flow}
\end{subfigure}%
\begin{subfigure}{.45\textwidth}
  \centering
  \includegraphics[width=\linewidth]{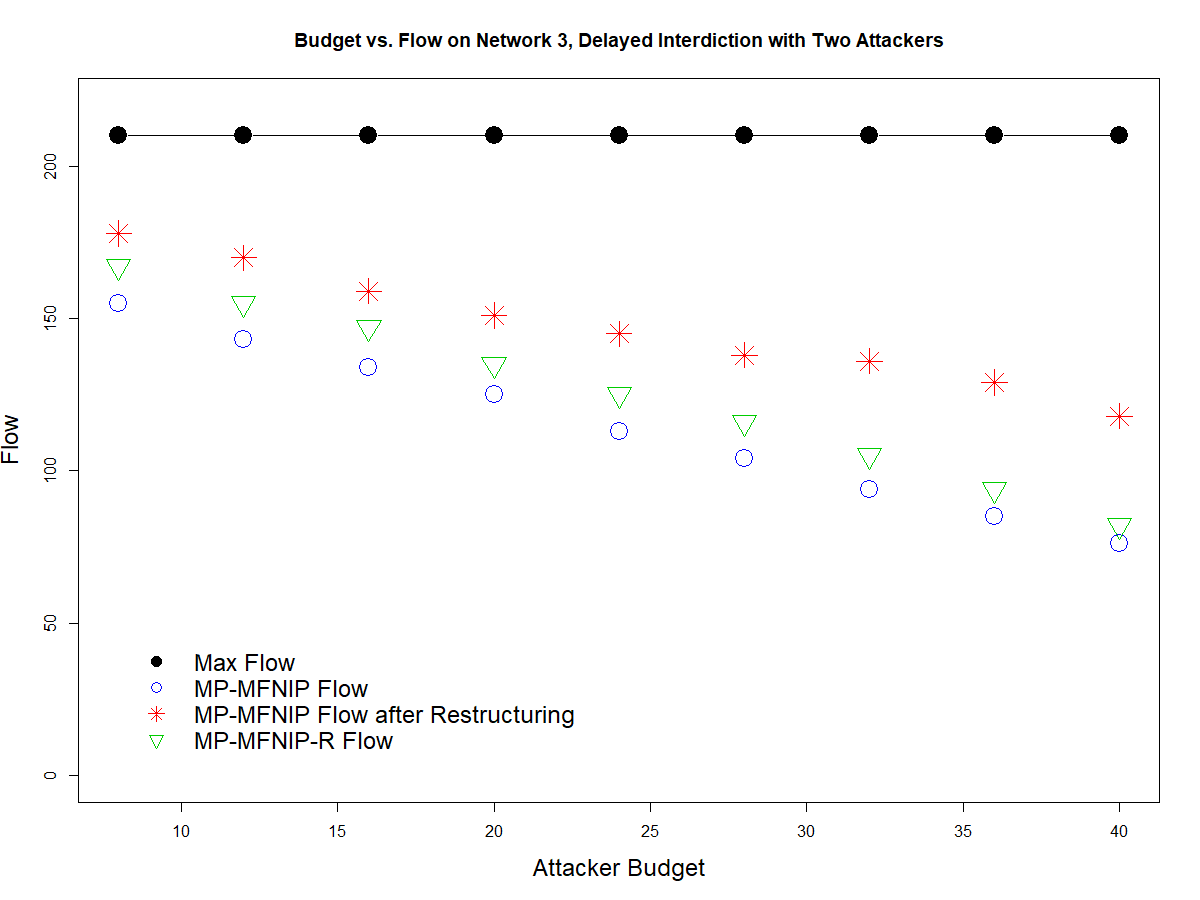}
  \caption{Network 3}
  \label{fig:net3_d_2_flow}
\end{subfigure}
\caption{Interdicted and restructured flows with delayed interdiction and two attackers over varying attacker budgets}
\label{fig:flowplots_d_2}
\end{figure}

\begin{figure}[h!]
\centering
\begin{subfigure}{.45\textwidth}
  \centering
  \includegraphics[width=\linewidth]{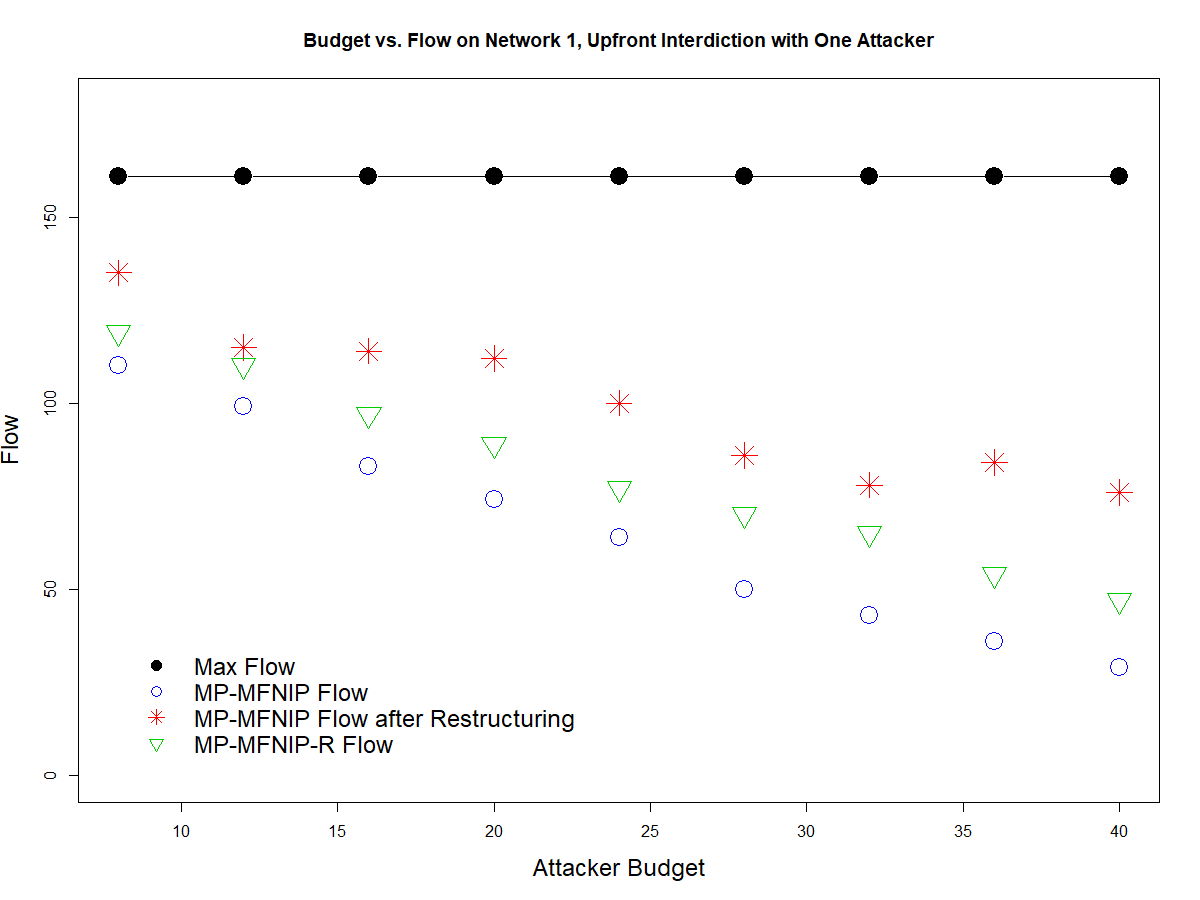}
  \caption{Network 1}
  \label{fig:net1_u_1_flow}
\end{subfigure}%
\begin{subfigure}{.45\textwidth}
  \centering
  \includegraphics[width=\linewidth]{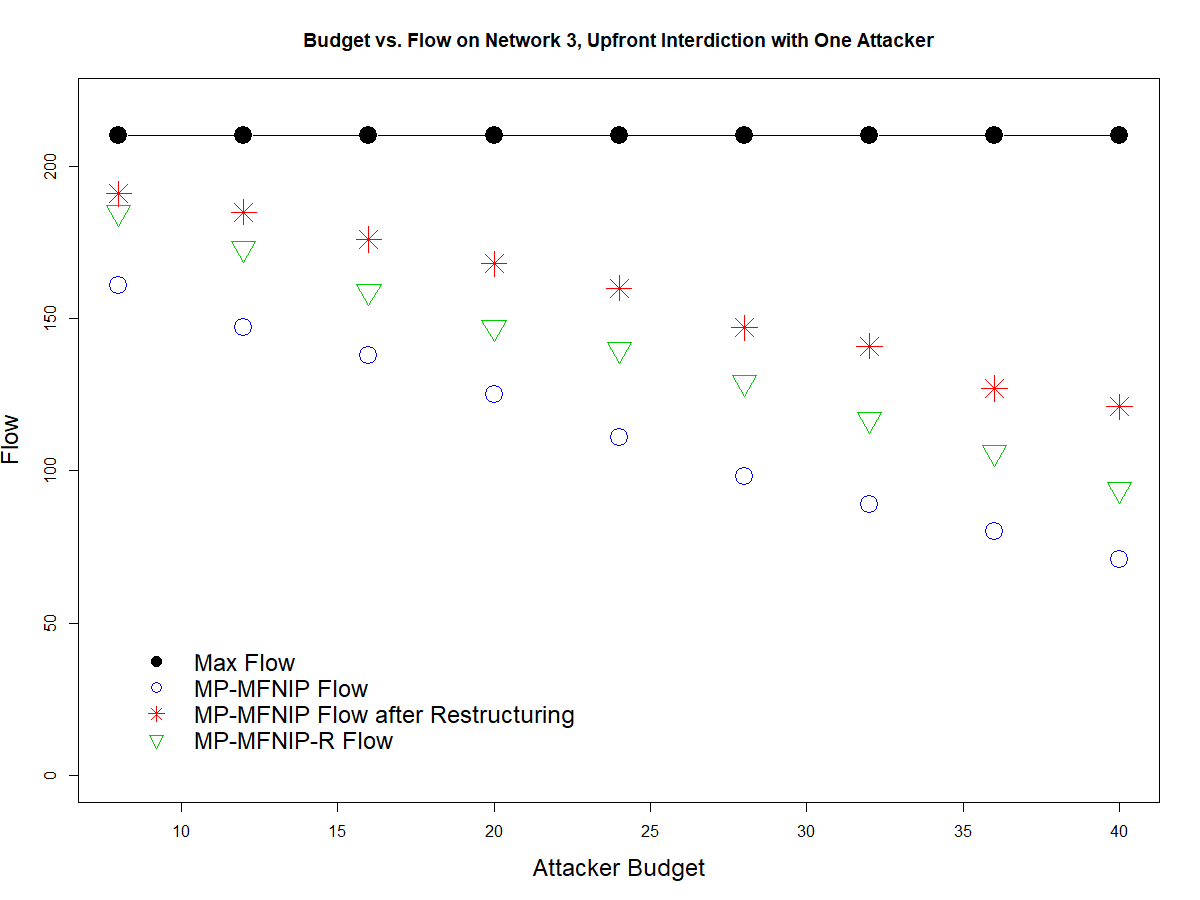}
  \caption{Network 3}
  \label{fig:net3_u_1_flow}
\end{subfigure}
\caption{Interdicted and restructured flows with upfront interdiction and one attacker over varying attacker budgets}
\label{fig:flowplots_u_1}
\end{figure}

\begin{figure}[h!]
\centering
\begin{subfigure}{.45\textwidth}
  \centering
  \includegraphics[width=\linewidth]{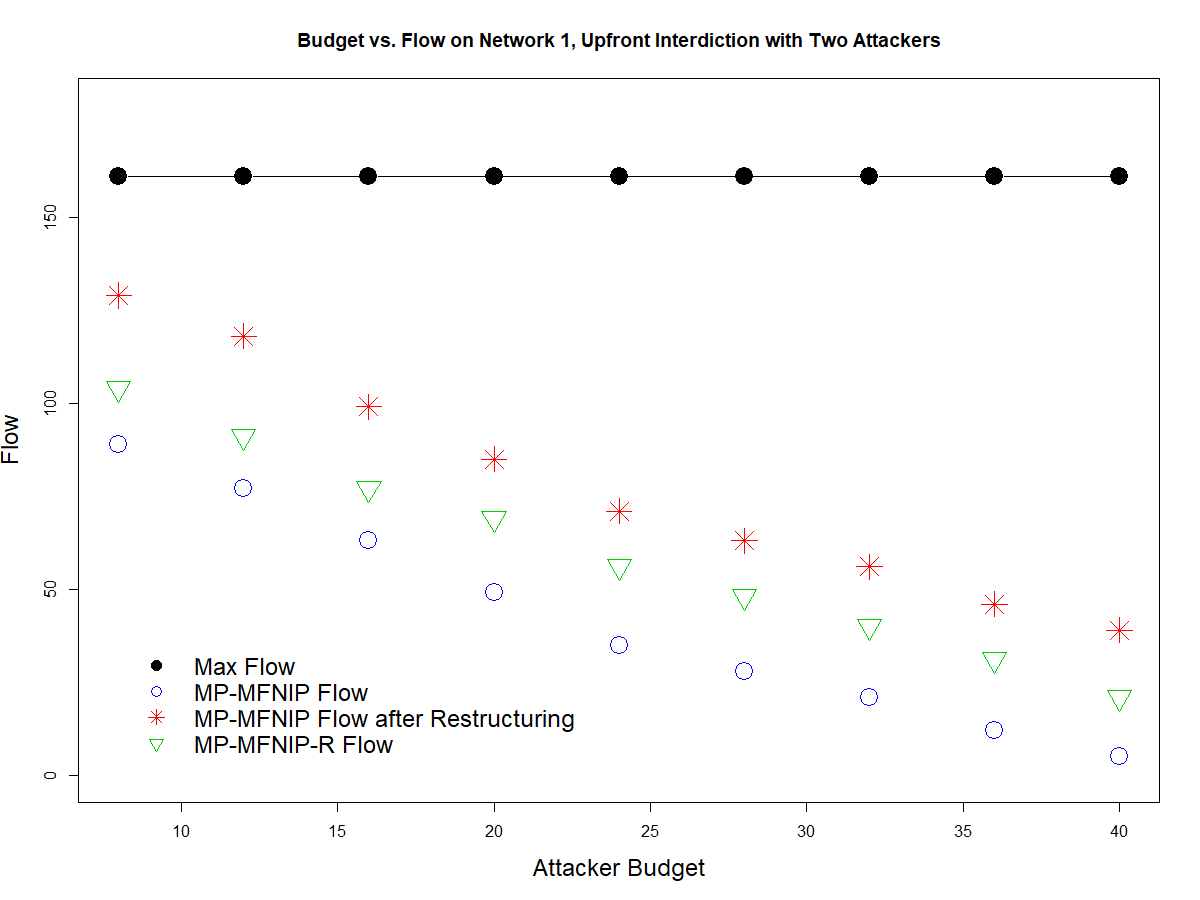}
  \caption{Network 1}
  \label{fig:net1_u_2_flow}
\end{subfigure}%
\begin{subfigure}{.45\textwidth}
  \centering
  \includegraphics[width=\linewidth]{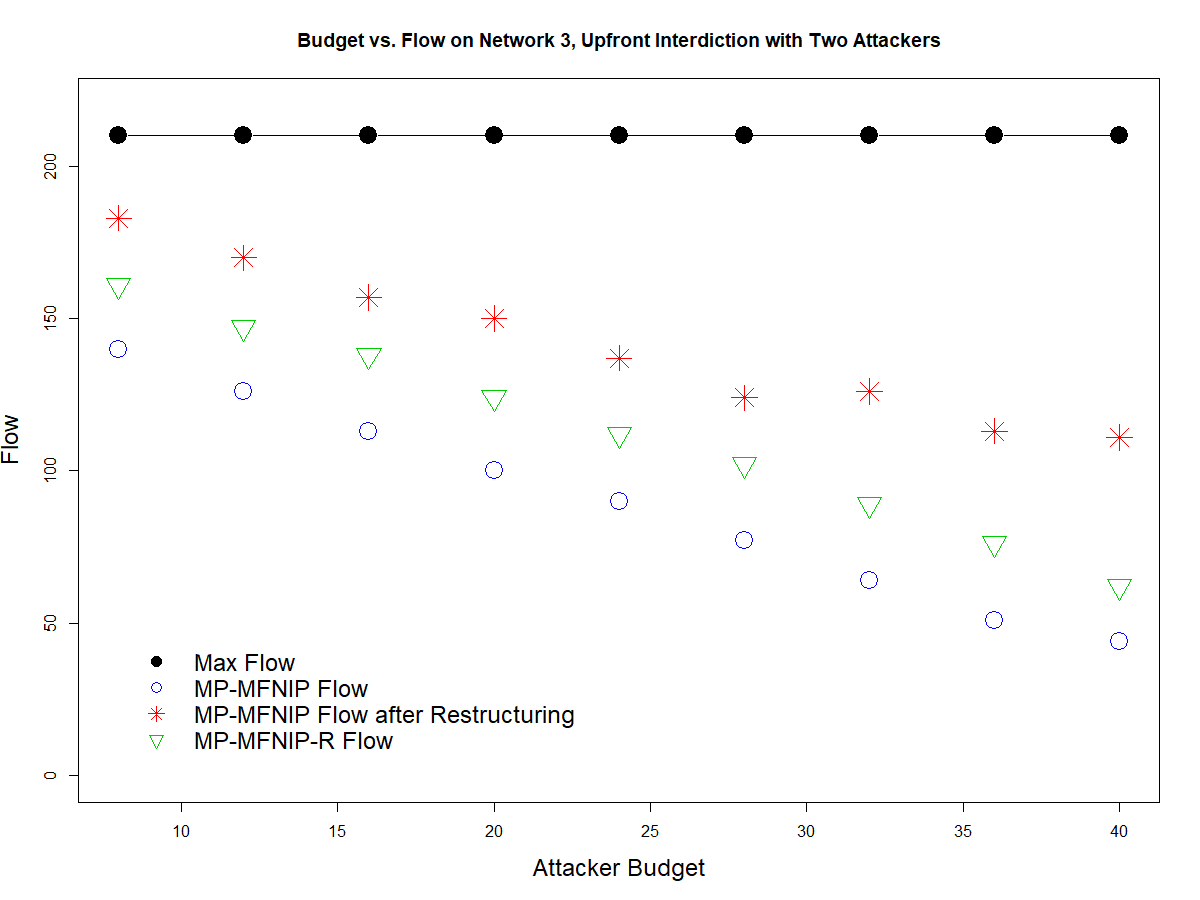}
  \caption{Network 3}
  \label{fig:net3_u_2_flow}
\end{subfigure}
\caption{Interdicted and restructured flows with upfront interdiction and two attackers over varying attacker budgets}
\label{fig:flowplots_u_2}
\end{figure}

Across all instances, the defender is able to regain a significant amount of flow after restructuring in response to the interdictions recommended by multi-period MFNIP. The interdictions recommended by MP-MFNIP-R are able to account for these restructurings, resulting in a lower optimal flow. However, in instances with only one attacker, these flows are closer to the restructured flows than the flows projected by multi-period MFNIP. This is particularly noticeable in network $1$ at higher budget levels, and network $3$ at lower budget levels. However, with the inclusion of a second attacker that can prevent the recruitment of new participants, the amount of flow restructured in MP-MFNIP-R is significantly limited, especially in the case of network $3$. By preventing recruitment, more victims can be interdicted, while also preventing them from being replaced, resulting in effectively reducing the total flow.

In comparing the flow differences between delayed interdiction and upfront interdiction, the difference in flows between the two models starts small, and the decrease in flow from upfront interdictions grows as the attacker budget increases. This aligns with our expectations, since at larger budget levels, the attacker with upfront interdictions can disrupt more of the network sooner. Even though this allows for restructurings to be implemented in earlier time periods, the original network is able to operate better than the restructured network.

We next present results on how many nodes of each type were recommended to be interdicted. Tables \ref{tbl:net1_d_1int_who_text} and \ref{tbl:net1_d_2int_who_text} present the recommended interdictions for network $1$ with delayed interdictions and one and two attackers, respectively. Tables \ref{tbl:net3_d_1int_who_text} and \ref{tbl:net3_d_2int_who_text} likewise present these results on network $3$. Tables \ref{tbl:net1_u_1int_who_text} - \ref{tbl:net3_u_2int_who_text} present these results with upfront interdictions. Columns with ``MP-MFNIP" report the results from multi-period MFNIP, and columns with ``MP-MFNIP-R" report the results from MMNFIP-R. Columns with ``2nd" report the decisions of the second attacker.

\begin{table}[h!]
\begin{center}
{\scriptsize %
\begin{tabular}{|c|c|c|c|c|c|c|}
\hline
Budget & \begin{tabular}[c]{@{}c@{}}MP-MFNIP\\ Int Trafficker\end{tabular} & \begin{tabular}[c]{@{}c@{}}MP-MFNIP\\ Int Bottom\end{tabular} & \begin{tabular}[c]{@{}c@{}}MP-MFNIP\\ Int Victim\end{tabular} & \begin{tabular}[c]{@{}c@{}}MP-MNFIP-R\\ Int Trafficker\end{tabular} & \begin{tabular}[c]{@{}c@{}}MP-MFNIP-R\\ Int Bottom\end{tabular} & \begin{tabular}[c]{@{}c@{}}MP-MFNIP-R\\ Int Victim\end{tabular} \\ \hline
8 & 0 & 1 & 2 & 0 & 1 & 2 \\ \hline
12 & 0 & 1 & 4 & 0 & 2 & 2 \\ \hline
16 & 0 & 2 & 4 & 1 & 0 & 6 \\ \hline
20 & 0 & 1 & 8 & 1 & 0 & 8 \\ \hline
24 & 0 & 1 & 10 & 1 & 0 & 10 \\ \hline
28 & 0 & 2 & 10 & 1 & 1 & 10 \\ \hline
32 & 0 & 1 & 14 & 0 & 0 & 16 \\ \hline
36 & 0 & 2 & 14 & 0 & 0 & 18 \\ \hline
40 & 0 & 2 & 16 & 0 & 0 & 20 \\ \hline
\end{tabular}
}%
\caption{Recommended delayed interdictions on network 1 with one attacker}
\label{tbl:net1_d_1int_who_text}
\end{center}
\end{table}

\begin{table}[h!]
\begin{center}
{\scriptsize %
\begin{tabular}{|c|c|c|c|c|c|c|}
\hline
Budget & \begin{tabular}[c]{@{}c@{}}MP-MFNIP\\ Int Trafficker\end{tabular} & \begin{tabular}[c]{@{}c@{}}MP-MFNIP\\ Int Bottom\end{tabular} & \begin{tabular}[c]{@{}c@{}}MP-MFNIP\\ Int Victim\end{tabular} & \begin{tabular}[c]{@{}c@{}}MP-MFNIP-R\\ Int Trafficker\end{tabular} & \begin{tabular}[c]{@{}c@{}}MP-MFNIP-R\\ Int Bottom\end{tabular} & \begin{tabular}[c]{@{}c@{}}MP-MFNIP-R\\ Int Victim\end{tabular} \\ \hline
8 & 0 & 2 & 0 & 0 & 2 & 0 \\ \hline
12 & 0 & 3 & 0 & 0 & 0 & 6 \\ \hline
16 & 0 & 2 & 4 & 1 & 0 & 6 \\ \hline
20 & 0 & 2 & 6 & 1 & 0 & 8 \\ \hline
24 & 1 & 2 & 6 & 0 & 0 & 12 \\ \hline
28 & 2 & 2 & 5 & 1 & 0 & 12 \\ \hline
32 & 2 & 2 & 7 & 1 & 0 & 14 \\ \hline
36 & 2 & 2 & 9 & 1 & 0 & 16 \\ \hline
40 & 1 & 2 & 14 & 1 & 0 & 18 \\ \hline
\end{tabular}
}%
\caption{Recommended delayed interdictions on network 3 with one attacker}
\label{tbl:net3_d_1int_who_text}
\end{center}
\end{table}

\begin{sidewaystable}[]
\begin{center}
\resizebox{\textwidth}{!}{
\begin{tabular}{|c|c|c|c|c|c|c|c|c|c|c|}
\hline
Budget & \begin{tabular}[c]{@{}c@{}}MP-MFNIP\\ Int Trafficker\end{tabular} & \begin{tabular}[c]{@{}c@{}}MP-MFNIP\\ Int Bottom\end{tabular} & \begin{tabular}[c]{@{}c@{}}MP-MFNIP\\ Int Victim\end{tabular} & \begin{tabular}[c]{@{}c@{}}MP-MFNIP 2nd\\ Int Current Vic\end{tabular} & \begin{tabular}[c]{@{}c@{}}MP-MFNIP 2nd\\ Int Prosp Vic\end{tabular} & \begin{tabular}[c]{@{}c@{}}MP-MFNIP-R\\ Int Trafficker\end{tabular} & \begin{tabular}[c]{@{}c@{}}MP-MFNIP-R\\ Int Bottom\end{tabular} & \begin{tabular}[c]{@{}c@{}}MP-MFNIP-R\\ Int Victim\end{tabular} & \begin{tabular}[c]{@{}c@{}}MP-MFNIP-R 2nd\\ Int Current Victim\end{tabular} & \begin{tabular}[c]{@{}c@{}}MP-MFNIP-R 2nd\\ Int Prosp Victim\end{tabular} \\ \hline
8 & 0 & 1 & 2 & 3 & 1 & 0 & 0 & 4 & 2 & 3 \\ \hline
12 & 0 & 1 & 4 & 2 & 3 & 0 & 0 & 6 & 2 & 3 \\ \hline
16 & 0 & 1 & 6 & 3 & 1 & 0 & 0 & 8 & 2 & 3 \\ \hline
20 & 0 & 1 & 8 & 3 & 1 & 0 & 0 & 10 & 2 & 3 \\ \hline
24 & 0 & 2 & 12 & 2 & 3 & 0 & 0 & 12 & 2 & 4 \\ \hline
28 & 0 & 1 & 14 & 2 & 3 & 0 & 0 & 14 & 2 & 4 \\ \hline
32 & 0 & 1 & 14 & 2 & 4 & 0 & 0 & 16 & 2 & 4 \\ \hline
36 & 0 & 2 & 14 & 2 & 4 & 0 & 0 & 18 & 2 & 4 \\ \hline
40 & 0 & 3 & 14 & 2 & 4 & 0 & 0 & 20 & 0 & 8 \\ \hline
\end{tabular}}
\caption{Recommended delayed interdictions on network 1 with two attackers}
\label{tbl:net1_d_2int_who_text}

\vspace{2\baselineskip}
\resizebox{\textwidth}{!}{
\begin{tabular}{|c|c|c|c|c|c|c|c|c|c|c|}
\hline
Budget & \begin{tabular}[c]{@{}c@{}}MP-MFNIP\\ Int Trafficker\end{tabular} & \begin{tabular}[c]{@{}c@{}}MP-MFNIP\\ Int Bottom\end{tabular} & \begin{tabular}[c]{@{}c@{}}MP-MFNIP\\ Int Victim\end{tabular} & \begin{tabular}[c]{@{}c@{}}MP-MFNIP\\ Int Current Vic\end{tabular} & \begin{tabular}[c]{@{}c@{}}MP-MFNIP 2nd\\ Int Prosp Vic\end{tabular} & \begin{tabular}[c]{@{}c@{}}MP-MFNIP-R 2nd\\ Int Trafficker\end{tabular} & \begin{tabular}[c]{@{}c@{}}MP-MFNIP-R\\ Int Bottom\end{tabular} & \begin{tabular}[c]{@{}c@{}}MP-MFNIP-R\\ Int Victim\end{tabular} & \begin{tabular}[c]{@{}c@{}}MP-MFNIP-R 2nd\\ Int Current Victim\end{tabular} & \begin{tabular}[c]{@{}c@{}}MP-MFNIP-R 2nd\\ Int Prosp Victim\end{tabular} \\ \hline
8 & 0 & 2 & 0 & 3 & 1 & 0 & 0 & 4 & 2 & 3 \\ \hline
12 & 0 & 2 & 2 & 3 & 1 & 0 & 0 & 6 & 2 & 3 \\ \hline
16 & 0 & 2 & 4 & 3 & 1 & 0 & 0 & 8 & 2 & 4 \\ \hline
20 & 0 & 2 & 6 & 3 & 1 & 0 & 0 & 10 & 2 & 4 \\ \hline
24 & 1 & 2 & 6 & 3 & 1 & 0 & 0 & 12 & 2 & 4 \\ \hline
28 & 0 & 1 & 12 & 2 & 4 & 0 & 0 & 14 & 2 & 4 \\ \hline
32 & 1 & 3 & 8 & 2 & 4 & 0 & 0 & 16 & 2 & 4 \\ \hline
36 & 2 & 3 & 7 & 2 & 4 & 0 & 0 & 18 & 1 & 7 \\ \hline
40 & 1 & 1 & 14 & 2 & 4 & 0 & 0 & 20 & 1 & 7 \\ \hline
\end{tabular}
}
\caption{Recommended delayed interdictions on network 3 with two attackers}
\label{tbl:net3_d_2int_who_text}
\end{center}
\end{sidewaystable}

\begin{table}[h!]
\begin{center}
{\scriptsize %
\begin{tabular}{|c|c|c|c|c|c|c|}
\hline
Budget & \begin{tabular}[c]{@{}c@{}}MP-MFNIP\\ Int Trafficker\end{tabular} & \begin{tabular}[c]{@{}c@{}}MP-MFNIP\\ Int Bottom\end{tabular} & \begin{tabular}[c]{@{}c@{}}MP-MFNIP\\ Int Victim\end{tabular} & \begin{tabular}[c]{@{}c@{}}MP-MFNIP-R\\ Int Trafficker\end{tabular} & \begin{tabular}[c]{@{}c@{}}MP-MFNIP-R\\ Int Bottom\end{tabular} & \begin{tabular}[c]{@{}c@{}}MP-MFNIP-R\\ Int Victim\end{tabular} \\ \hline
8 & 0 & 2 & 0 & 0 & 0 & 4 \\ \hline
12 & 1 & 1 & 1 & 1 & 0 & 4 \\ \hline
16 & 1 & 2 & 1 & 1 & 0 & 6 \\ \hline
20 & 1 & 3 & 1 & 1 & 0 & 8 \\ \hline
24 & 0 & 2 & 8 & 1 & 0 & 10 \\ \hline
28 & 0 & 2 & 10 & 1 & 1 & 10 \\ \hline
32 & 0 & 2 & 12 & 2 & 2 & 8 \\ \hline
36 & 1 & 2 & 10 & 2 & 2 & 10 \\ \hline
40 & 1 & 2 & 12 & 2 & 3 & 10 \\ \hline
\end{tabular}
}%
\caption{Recommended upfront interdictions on network 1 with one attacker}
\label{tbl:net1_u_1int_who_text}
\end{center}
\end{table}

\begin{table}[h!]
\begin{center}
{\scriptsize %
\begin{tabular}{|c|c|c|c|c|c|c|}
\hline
Budget & \begin{tabular}[c]{@{}c@{}}MP-MFNIP\\ Int Trafficker\end{tabular} & \begin{tabular}[c]{@{}c@{}}MP-MFNIP\\ Int Bottom\end{tabular} & \begin{tabular}[c]{@{}c@{}}MP-MFNIP\\ Int Victim\end{tabular} & \begin{tabular}[c]{@{}c@{}}MP-MFNIP-R\\ Int Trafficker\end{tabular} & \begin{tabular}[c]{@{}c@{}}MP-MFNIP-R\\ Int Bottom\end{tabular} & \begin{tabular}[c]{@{}c@{}}MP-MFNIP-R\\ Int Victim\end{tabular} \\ \hline
8 & 0 & 2 & 0 & 0 & 0 & 4 \\ \hline
12 & 0 & 3 & 0 & 1 & 1 & 2 \\ \hline
16 & 0 & 2 & 4 & 1 & 0 & 6 \\ \hline
20 & 1 & 2 & 3 & 1 & 0 & 8 \\ \hline
24 & 1 & 2 & 5 & 1 & 0 & 10 \\ \hline
28 & 2 & 2 & 5 & 1 & 0 & 12 \\ \hline
32 & 2 & 3 & 5 & 1 & 0 & 14 \\ \hline
36 & 2 & 2 & 9 & 1 & 0 & 16 \\ \hline
40 & 2 & 3 & 9 & 1 & 0 & 18 \\ \hline
\end{tabular}
}%
\caption{Recommended upfront interdictions on network 3 with one attacker}
\label{tbl:net3_u_1int_who_text}
\end{center}
\end{table}

\begin{sidewaystable}[]
\begin{center}
\resizebox{\textwidth}{!}{
\begin{tabular}{|c|c|c|c|c|c|c|c|c|c|c|}
\hline
Budget & \begin{tabular}[c]{@{}c@{}}MP-MFNIP\\ Int Trafficker\end{tabular} & \begin{tabular}[c]{@{}c@{}}MP-MFNIP\\ Int Bottom\end{tabular} & \begin{tabular}[c]{@{}c@{}}MP-MFNIP\\ Int Victim\end{tabular} & \begin{tabular}[c]{@{}c@{}}MP-MFNIP\\ Int Current Vic\end{tabular} & \begin{tabular}[c]{@{}c@{}}MP-MFNIP\\ Int Prosp Vic\end{tabular} & \begin{tabular}[c]{@{}c@{}}MP-MFNIP-R\\ Int Trafficker\end{tabular} & \begin{tabular}[c]{@{}c@{}}MP-MFNIP-R\\ Int Bottom\end{tabular} & \begin{tabular}[c]{@{}c@{}}MP-MFNIP-R\\ Int Victim\end{tabular} & \begin{tabular}[c]{@{}c@{}}MP-MFNIP-R 2nd\\ Int Current Victim\end{tabular} & \begin{tabular}[c]{@{}c@{}}MP-MFNIP-R 2nd\\ Int Prosp Victim\end{tabular} \\ \hline
8 & 0 & 2 & 0 & 3 & 1 & 0 & 0 & 4 & 2 & 3 \\ \hline
12 & 0 & 2 & 2 & 2 & 2 & 0 & 0 & 6 & 2 & 3 \\ \hline
16 & 0 & 2 & 4 & 2 & 3 & 0 & 0 & 8 & 2 & 3 \\ \hline
20 & 0 & 2 & 6 & 2 & 3 & 0 & 0 & 10 & 2 & 3 \\ \hline
24 & 0 & 2 & 8 & 2 & 3 & 0 & 0 & 12 & 2 & 4 \\ \hline
28 & 0 & 2 & 10 & 2 & 3 & 0 & 0 & 14 & 2 & 4 \\ \hline
32 & 0 & 3 & 10 & 2 & 3 & 0 & 0 & 16 & 2 & 4 \\ \hline
36 & 0 & 2 & 14 & 2 & 4 & 0 & 0 & 18 & 2 & 4 \\ \hline
40 & 0 & 3 & 14 & 2 & 4 & 0 & 0 & 20 & 0 & 8 \\ \hline
\end{tabular}
}
\caption{Recommended upfront interdictions on network 1 with two attackers}
\label{tbl:net1_u_2int_who_text}

\vspace{2\baselineskip}
\resizebox{\textwidth}{!}{
\begin{tabular}{|c|c|c|c|c|c|c|c|c|c|c|}
\hline
Budget & \begin{tabular}[c]{@{}c@{}}MP-MFNIP\\ Int Trafficker\end{tabular} & \begin{tabular}[c]{@{}c@{}}MP-MFNIP\\ Int Bottom\end{tabular} & \begin{tabular}[c]{@{}c@{}}MP-MFNIP\\ Int Victim\end{tabular} & \begin{tabular}[c]{@{}c@{}}MP-MFNIP\\ Int Current Vic\end{tabular} & \begin{tabular}[c]{@{}c@{}}MP-MFNIP\\ Int Prosp Vic\end{tabular} & \begin{tabular}[c]{@{}c@{}}MP-MFNIP-R\\ Int Trafficker\end{tabular} & \begin{tabular}[c]{@{}c@{}}MP-MFNIP-R\\ Int Bottom\end{tabular} & \begin{tabular}[c]{@{}c@{}}MP-MFNIP-R\\ Int Victim\end{tabular} & \begin{tabular}[c]{@{}c@{}}MP-MFNIP-R\\ Int Current Victim\end{tabular} & \begin{tabular}[c]{@{}c@{}}MP-MFNIP-R\\ Int Prosp Victim\end{tabular} \\ \hline
8 & 0 & 2 & 0 & 3 & 1 & 0 & 0 & 4 & 2 & 3 \\ \hline
12 & 0 & 2 & 2 & 3 & 1 & 0 & 0 & 6 & 2 & 3 \\ \hline
16 & 1 & 2 & 1 & 3 & 1 & 0 & 0 & 8 & 2 & 4 \\ \hline
20 & 2 & 2 & 1 & 3 & 1 & 0 & 0 & 10 & 2 & 4 \\ \hline
24 & 1 & 2 & 5 & 3 & 1 & 0 & 0 & 12 & 2 & 4 \\ \hline
28 & 2 & 2 & 5 & 3 & 1 & 0 & 0 & 14 & 2 & 4 \\ \hline
32 & 1 & 3 & 7 & 2 & 4 & 0 & 0 & 16 & 2 & 4 \\ \hline
36 & 2 & 3 & 7 & 2 & 4 & 0 & 0 & 18 & 1 & 7 \\ \hline
40 & 2 & 3 & 9 & 1 & 5 & 0 & 0 & 20 & 1 & 7 \\ \hline
\end{tabular}
}
\caption{Recommended upfront interdictions on network 3 with two attackers}
\label{tbl:net3_u_2int_who_text}
\end{center}
\end{sidewaystable}

In the case of delayed interdiction with one attacker, multi-period MFNIP prioritizes interdicting bottoms in both networks, while focusing more on interdicting victims in network $1$, and more on interdicting traffickers in network $3$. Since network $3$ has more victims, it would be reasonable that disrupting traffickers would reduce the flow more, even accounting for the time delay. However, in both networks, MP-MFNIP-R consistently allocates a significant portion of the attacker budget to disrupting victims. This is likely due to the inclusion of back-up traffickers that the multi-period MFNIP is unable to account for. These trends continue for the case with two attackers. At lower budget levels, the second attacker spends more budget on interdicting current victims, and shifts their focus to preventing the recruitment of new victims at higher budget levels. In both networks, the number of prospective victims that the second attacker prevents their recruitment is double that of the number of current victims they interdict when the primary attacker has a budget of at least $24$. As the attacker budget increases, the first attacker is able to disrupt more victims, allowing for more opportunities for recruitment, making the prevention of recruitment a higher priority. This accounts for the decrease in restructured flow seen in Figures \ref{fig:flowplots_d_2} and \ref{fig:flowplots_u_2}.

In the case of upfront interdiction with one attacker, the recommended interdictions from MP-MFNIP-R in network $1$ drastically shift towards interdicting traffickers and bottoms, while the recommended interdictions from MP-MFNIP-R in network $3$ are almost completely consistent with the recommendations from the delayed interdiction model. We additionally have that, when there are two attackers, the first attacker's interdictions increasingly focus on disrupting victims currently in the network, while the second attacker again focuses on the prevention of recruitment. We again observe that, in both networks, the number of prospective victims that the second attacker prevents their recruitment is double that of the number of current victims they interdict when the primary attacker has a budget of at least $24$. This suggests that disrupting the ability of traffickers to recruit new victims will be key in effectively disrupting their operations. Currently, macroscopic models focus on how victims are transported between locations to move from their initial location to where demand is. Our results suggest that macroscopic network interdiction models should also focus on understanding which communities victims of trafficking are more prominently recruited from and how resources should be allocated to those susceptible communities to reduce the population's vulnerabilities to being trafficked, as opposed to solely focusing on movement between locations.

\section{Conclusion and Future Work}
\label{sec:conc}
We introduced the multi-period max flow network interdiction problem with restructuring (MP-MFNIP-R), where flow is sent from the source node to sink node in each time period, and interdictions and restructurings are decided upon upfront and implemented throughout the time horizon. We motivated this problem with applications in disrupting domestic sex trafficking networks, where flow is defined as the ability of a trafficker to control their victims. We modeled this problem as a BMILP, and derived a column-and-constraint generation (C\&CG) algorithm to solve this problem. Modeling-specific augmentations were incorporated in the C\&CG algorithm to significantly improve the solve time. For models where the interdictions are also implemented upfront, we proposed equivalent models that are able to significantly reduce the size of the problem. We additionally proposed additional models that include a second attacker that has the ability to interdict victims currently in the network, as well as to prevent the recruitment of new victims, modeling the ability of social services to disrupt a sex trafficking network. 

We tested our model on validated synthetic domestic sex trafficking networks with $5$ single-trafficker operations. We note that the benefits of all interdictions occurring upfront was more noticeable at higher attacker budget levels. Our work also supports that coordination between anti-trafficking stakeholders results in more effective disruption of flow. The inclusion of a second attacker with the ability to prevent recruitment is key proved vital to successfully reducing the flow after restructurings. By preventing recruitment, the resources spent on disrupting victims means that these victims cannot be replaced by traffickers. This suggested that future research, both qualitative and quantitative, should focus on exploring how to disrupt recruitment in sex trafficking networks. This work is speculative and more empirical data is needed to understand the true impacts of our mathematical analysis.

There is much work needed to more accurately apply network interdiction models to disrupting domestic sex trafficking networks. A first avenue of research is improving the dynamics proposed in this model to allow for interdiction and restructurings to occur in any time period, as decided by the attacker and defender. Bilevel mixed integer linear programs with integer decision variables in both levels of the problem are already computationally challenging, so extending this model to a multi-level mixed integer linear program with integer decision variables in every level will require significant computational advances. Additionally, as with any illicit network, there is significant uncertainty involved in learning the network structure. Future work can integrate a learning aspect to the model, where as the attacker learns more about the participants in the network and how it adapts as interdictions are implemented.

\section*{Acknowledgements}
We recognize that our research cannot capture all the complexities of the lived experiences of trafficking victims and survivors.  We acknowledge the significant contributions of our survivor-centered advisory group including: Tonique Ayler, Breaking Free, Housing Advocate, Survivor Leader; Terry Forliti, Independent Consultant, Survivor Leader;  Joy Friedman, Independent Consultant, Survivor Leader; Mikki Mariotti, Director, The PRIDE Program of the Family Partnership; Christine Nelson, Independent Consultant, Fellow of Survivor Alliance, Survivor Leader; Lorena Nevile, Vice President of Programs, The Family Partnership; and Drea Sortillon, Witness-Victim Division, Hennepin County Attorney’s Office.  They have provided critical expertise in understanding the complexities of trafficking networks.

\section*{Declarations}
\textbf{Funding} This material is based upon work supported by the National Science Foundation (NSF) under Grant No. 1838315 and the National Institute of Justice (NIJ) under Grant No. 2020-R2-CX-0022. The opinions expressed in the paper do not necessarily reflect the views of the NSF or NIJ.

\noindent\textbf{Conflicts of interest} The authors have no relevant financial or non-financial interests to disclose.

\bibliographystyle{agsm}
\bibliography{ref.bib}

\newpage
\begin{appendices}
\section{Summary of Notation}
\label{note}

\begin{table}[H]
    \centering
    {\scriptsize %
    \begin{tabular}{|c|l|}
        \hline
         Set & Description of Set \\ \hline
         $N$ & set of nodes \\ \hline
         $T$ & trafficker nodes \\ \hline
         $B$ & bottom nodes \\ \hline
         $V$ & victim nodes \\ \hline
         $T^R$ & back-up trafficker nodes \\ \hline
         $B^R$ & victims nodes that can be promoted to the role of bottom \\ \hline
         $V^R$ & recruitable victim nodes \\ \hline
         $A$ & set of arcs currently in the network \\ \hline
         $A^R$ & set of arcs that can be restructured \\ \hline
         $A^{R,out}$ & set of arcs that can be restructured from the tail node \\ \hline
         $A^{R,in}$ & set of arcs that can be restructured from the head node \\ \hline
         $Y$ & set of all feasible interdiction plans \\ \hline
         $Z(y)$ & set of all feasible restructuring plans responding to interdiction plan $y$ \\ \hline
         $S^k$ & set of network phases for restructuring plan $k$ in upfront interdiction model \\ \hline
         $P^k$ & set of recruitable victims that were not recruited in restructuring plan $k$ \\ \hline
         $C^k$ & set of traffickers that have not taken all of their actions in restructuring plan $k$ \\ \hline
         $A^{k,rec}$ & set of restructurable arcs between traffickers in $C^k$ and recruitable victims in $P^k$ \\ \hline
         \end{tabular}
         }%
    \caption{Description of notation for sets}
    \label{tab:noteset}
\end{table}

\begin{table}[H]
    \centering
    {\scriptsize %
    \begin{tabular}{|c|l|}
        \hline
         Variable & Description of Variable \\ \hline
         $y_i$ & indicator of whether node $i$ has been interdicted \\ \hline
         $\gamma_{it}$ & indicator of if node $i$ has been interdicted in or before time $t$ \\ \hline
         $z^{out}$ & indicator of whether arc $(i,j)$ has been ``out" restructured  \\ \hline
         $\zeta^{out}_{ijt} $& indicator of if arc $(i,j)$ has been ``out" restructured in or before time $t$ \\ \hline
         $z^{in}$ & indicator of whether arc $(i,j)$ has been ``in" restructured \\ \hline
         $\zeta^{in}_{ijt} $& indicator of if arc $(i,j)$ has been ``in" restructured in or before time $t$ \\ \hline
         $x_{ijt}$ & amount of flow across arc $(i,j)$ at time $t$\\ \hline
         $x_{it}$ & amount of flow across node $i$ at time $t$ \\ \hline
         $\tilde{r}_i$ & adjusted cost to interdict trafficker $i$ accounting for reductions from other interdictions \\ \hline
         $y'_i$ & indicator of whether node $i$ has been interdicted by second attacker \\ \hline
         $\pi^{+^k}_{it}$ & dual variable for inflow conservation constraint for node $i$ at time $t$ for restructuring plan $k$ \\ \hline
         $\pi^{-^k}_{it}$ & dual variable for outflow conservation constraint for node $i$ at time $t$ for restructuring plan $k$ \\ \hline
         $\theta^k_{ijt}$ & indicator if arc $(i,j)$ in in the minimum cut at time $t$ with restructuring plan $k$\\ \hline
         $\theta^k_{it}$ & indicator if node $i$ in in the minimum cut at time $t$ with restructuring plan $k$\\ \hline
         $w^{out,k}_{ijt}$ & \begin{tabular}[l]{@{}l@{}}indicator of if arc $(i,j)$ is ``out" restructured at or after time period $t$,\\ where $(i,j)$ was ``out" restructured in restructuring plan $k$\end{tabular}\\ \hline
         $w^{in,k}_{ijt}$ & \begin{tabular}[l]{@{}l@{}}indicator of if arc $(i,j)$ is ``in" restructured at or after time period $t$,\\ where $(i,j)$ was ``in" restructured in restructuring plan $k$\end{tabular}\\ \hline
         $w^{out,k}_{ij}$ & \begin{tabular}[l]{@{}l@{}}indicator of if arc $(i,j)$ is ``out" restructured\\ where $(i,j)$ was ``out" restructured in restructuring plan $k$ for upfront interdiction model\end{tabular}\\ \hline
         $w^{in,k}_{ij}$ & \begin{tabular}[l]{@{}l@{}}indicator of if arc $(i,j)$ is ``in" restructured\\ where $(i,j)$ was ``in" restructured in restructuring plan for upfront interdiction model $k$\end{tabular}\\ \hline
         $w^{out,k}_{ijs}$ & \begin{tabular}[l]{@{}l@{}}indicator of if arc $(i,j)$ is ``out" restructured at or after network phase $s$,\\ where $(i,j)$ was ``out" restructured in restructuring plan $k$\end{tabular}\\ \hline
         $w^{in,k}_{ijs}$ & \begin{tabular}[l]{@{}l@{}}indicator of if arc $(i,j)$ is ``in" restructured at or after network phase $s$,\\ where $(i,j)$ was ``in" restructured in restructuring plan $k$\end{tabular}\\ \hline
         $\phi^k_{ijt}$ & \begin{tabular}[l]{@{}l@{}}indicator of if arc $(i,j)$ can be restructured at or after time period $t$\\ in addition to the feasible restructurings in restructuring plan $k$ \end{tabular}\\ \hline
         $\nu_{ki}$ & indicator if trafficker $i$ has performed $c_i^{out}$ actions in augmented restructuring plan $k$ \\ \hline
         $\xi_{ki}$ & indicator if trafficker $i$ has performed all feasible restructurings in augmented restructuring plan $k$ \\ \hline
    \end{tabular}
    }%
    \caption{Description of notation for variables}
    \label{tab:notevar}
\end{table}

\begin{table}[H]
    \centering    
    {\scriptsize %
    \begin{tabular}{|c|l|}
        \hline
         Parameter & Description of Parameter\\ \hline
         $\alpha$ & source node  \\ \hline
         $\omega$ & sink node \\ \hline
         $\tau$ & number of time periods in time horizon\\ \hline
         $\delta_i^y$ & number of time periods needed to interdict node $i$ \\ \hline
         $\delta_{ij}^z$ & number of time periods needed to restructure arc $(i,j)$ \\ \hline
         $u_{ij}$ & capacity of arc $(i,j)$ \\ \hline
         $u_i$ & capacity of node $i$ \\ \hline
         $\tilde{u}_i$ & capacity increase of victim node $i$ being promoted to the role of bottom\\ \hline
         $r_i$ & cost to interdict node $i$ \\ \hline
         $d_{il}$ & reduction in cost to interdict trafficker node $i$ if victim (or bottom) node $l$ is also interdicted\\ \hline
         $r_i^{min}$ & minimum cost to interdict trafficker node $i$ accounting for reductions from other interdictions\\ \hline
         $b$ & attacker budget \\ \hline
         $r'_i$ & cost for second attacker to interdict node $i$ \\ \hline
         $b'$ & attacker budget for second attacker \\ \hline
         $\delta^{min,out}_i$ & the minimum number of time periods need before trafficker node $i$ can initiate a restructuring\\ \hline
         $\delta^{max,out}_i$ & the maximum number of time periods need before trafficker node $i$ can initiate a restructuring\\ \hline
         $\delta^{min,in}_j$ & the minimum number of time periods need before victim node $j$ can initiate a restructuring\\ \hline
         $\delta^{max,in}_j$ & the maximum number of time periods need before victim node $j$ can initiate a restructuring\\ \hline
         $\bar{y}$ & a feasible interdiction plan \\  \hline
         $c^{out}_i$ & number of actions trafficker $i$ can take \\ \hline
         $c^{in}_j$ & number of actions victim $j$ can take \\ \hline
         $\lambda^{out,k}_{it}$ & indicator of if trafficker node $i$ is able to perform all restructurings performed in restructuring plan $k$ by time $t$ \\ \hline
         $\lambda^{in,k}_{jt}$ & indicator of if victim node $j$ is able to perform all restructurings performed in restructuring plan $k$ by time $t$ \\ \hline
         $z^{out,k}_{ij}$ & indicator of whether arc $(i,j)$ has been ``out" restructured in restructuring plan $k$ \\ \hline
         $\zeta^{out,k}_{ijt} $& indicator of if arc $(i,j)$ has been ``out" restructured in or before time $t$ in restructuring plan $k$ \\ \hline
         $z^{in,k}_{ij}$ & indicator of whether arc $(i,j)$ has been ``in" restructured in restructuring plan $k$ \\ \hline
         $\zeta^{in,k}_{ijt} $& indicator of if arc $(i,j)$ has been ``in" restructured in or before time $t$ in restructuring plan $k$ \\ \hline
         $M$ & number of restructuring plans in $\bigcup_{y \in Y} Z(y)$ \\ \hline
         $M_y$ & number of restructuring plans in $Z(y)$ \\ \hline
         $n$ & number of restructuring plans considered in \eqref{tlfFin} \\ \hline
         $U$ & upper bound on objective value of MP-MFNIP-R \\ \hline
         $L$ & lower bound on objective value of MP-MFNIP-R \\ \hline
         $\bar{t}$ & the latest time period any recruitable victim can be recruited in \\ \hline
         $a^k_i$ & number of actions taken be trafficker $i$ in restructuing plan $k$ \\ \hline
         $B$ & arbitrarily large parameter for big-$M$ style constraints \\ \hline
    \end{tabular}
    }%
    \caption{Description of notation for data/parameters}
    \label{tab:notepar}
\end{table} 

\begin{table}[H]
    \centering
    {\scriptsize %
    \begin{tabular}{|c|l|}
        \hline
        Parameter & Description of Parameter \\ \hline
        $\tau_s^k$ & number of time periods spent in network phase $s$ in restructuring plan $k$ \\ \hline
         $\zeta^{out,k}_{ijs} $& indicator of if arc $(i,j)$ has been ``out" restructured in or before network phase $s$ in restructuring plan $k$ \\ \hline
         $\zeta^{in,k}_{ijs} $& indicator of if arc $(i,j)$ has been ``in" restructured in or before network phase $s$ in restructuring plan $k$ \\ \hline
    \end{tabular}
    }%
    \caption{Description of notation for data/parameters specific to network phase model}
    \label{tab:notepar2}
\end{table}

\newpage
\section{Full Model Derivation}
\label{fullderiv}
As per standard column-and-constraint generation, we separate the maximization problem into two problems, where the innermost problem in only the continuous variables, the $x$ variables. This allows us to replace the problem with the dual minimization problem: 

\begin{singlespace}
\begin{subequations}
\label{tlf1}
\footnotesize\begin{align}
    \min_{y, \gamma \in Y} \max_{z, \zeta} \min_{\pi,\theta} ~~~ & \begin{multlined}
    \sum_{t = 1}^{\tau} [ \sum_{i \in N \setminus \{\alpha,\omega\}} u_i \gamma_{it} \theta_{it} + \sum_{(i,j) \in A} u_{ij} \theta_{ijt} + \sum_{(i,j) \in A^{R,out}} u_{ij} \zeta^{out}_{ijt} \theta_{ijt} \\+ \sum_{(i,j) \in A^{R,in}} u_{ij} \zeta^{in}_{ijt} \theta_{ijt} + \sum_{(i,j) \in B^R} \tilde{u_j} \zeta^{out}_{\alpha jt} \theta_{\alpha jt} ]\end{multlined} \nonumber\\ 
    \text{s.t.} \hspace{.050cm }& \pi_{jt}^+ + \theta_{\alpha jt} \ge 1 \\
    &\omit\hfill$ \text{ for } (\alpha,j) \in A \cup A^R, t = 1, \ldots, \tau$\nonumber\\
    & \pi_{jt}^+ - \pi_{it}^- + \theta_{ijt} \ge 0 \\
    & \omit\hfill$\text{ for } (i,j) \in A \cup A^R \text{ s.t. } i \ne \alpha, j \ne \omega, t = 1, \ldots, \tau$\nonumber\\
    & \pi_{it}^- - \pi_{it}^+ + \theta_{it} \ge 0 \\
    & \omit\hfill$\text{ for } i \in N\setminus \{\alpha,\omega\}, t = 1, \ldots, \tau $\nonumber\\
    & -\pi_{it}^- + \theta_{i\omega t} \ge 0 \\
    & \omit\hfill$\text{ for } (i,\omega) \in A \cup A^R, t = 1, \ldots, \tau$\\
    & \theta \ge 0 \\
    & y \in \{0,1\}^{|N|\setminus \{\alpha,\omega\}}  \\
    & \gamma \in \{0,1\}^{|N|\setminus \{\alpha,\omega\} \times \tau}\\
    & \text{Constraints } \eqref{con:intBudgetInModel} - \eqref{con:intTimeInModel} \nonumber
\end{align}
\end{subequations}
\end{singlespace}

As in \cite{kosmas2020interdicting}, we use an equivalent formulation to reduce the number of bilinear terms in the objective function.

\begin{singlespace}
\begin{subequations}
\label{tlf2}
\footnotesize\begin{align}
    \min_{y, \gamma} \max_{z, \zeta} \min_{\pi,\theta} ~~~ & \sum_{t = 1}^{\tau} [\sum_{i \in N \setminus \{\alpha,\omega\}} u_i \theta_{it} + \sum_{(i,j) \in A \cup A^{R,out} \cup A^{R,in}} u_{ij} \theta_{ijt} + \sum_{(i,j) \in B^R} \tilde{u_j} \zeta^{out}_{\alpha jt} \theta_{\alpha jt} ]\nonumber\\  
    \text{s.t.} \hspace{.050cm }& \pi_{jt}^+ + \theta_{\alpha jt} \ge 1 \\
    & \omit\hfill$\text{ for } (\alpha,j) \in A, t = 1, \ldots, \tau$\nonumber\\
    & \pi_{jt}^+ - \pi_{it}^- + \theta_{ijt} \ge 0 \\
    & \omit\hfill$\text{ for } (i,j) \in A \text{ s.t. } i \ne \alpha, j \ne \omega, k = 1, \ldots, n, t = 1, \ldots, \tau$\nonumber\\
    & \pi_{it}^- - \pi_{it}^+ + \theta_{it} \ge - \gamma_{it} \\
    & \omit\hfill$\text{ for } i \in N\setminus \{\alpha,\omega\}, t = 1, \ldots, \tau $\nonumber\\
    & -\pi_{it}^- + \theta_{i\omega t} \ge 0 \\
    & \omit\hfill$\text{ for } (i,\omega) \in A, k = 1, \ldots, n, t = 1, \ldots, \tau$ \nonumber\\
    & \pi_{jt}^+ + \theta_{\alpha jt} \ge \zeta^{out}_{\alpha jt} \\
    & \omit\hfill$\text{ for } (\alpha,j) \in A^{R, out}, t = 1, \ldots, \tau$ \nonumber\\
    & \pi_{jt}^+ - \pi_{it}^- + \theta_{ijt} \ge \zeta^{out}_{ijt} - 1 \\
    & \omit\hfill$\text{ for } (i,j) \in  A^{R, out} \text{ s.t. } i \ne \alpha, j \ne \omega, k = 1, \ldots, n, t = 1, \ldots, \tau$ \nonumber\\
    & \pi_{jt}^+ - \pi_{it}^- + \theta_{ijt} \ge \zeta^{in}_{ijt} - 1 \\
    & \omit\hfill$\text{ for } (i,j) \in  A^{R, in} \text{ s.t. } i \ne \alpha, j \ne \omega, k = 1, \ldots, n, t = 1, \ldots, \tau$ \nonumber\\
    & \theta \ge 0 \\
    & y \in \{0,1\}^{|N|\setminus \{\alpha,\omega\}} \\
    & \gamma \in \{0,1\}^{|N|\setminus \{\alpha,\omega\} \times \tau}\\
    & \text{Constraints } \eqref{con:intBudgetInModel} - \eqref{con:intTimeInModel} \nonumber
\end{align}
\end{subequations}
\end{singlespace}

Note that $Z(y)$ is a finite set, so we can reduce the tri-level problem into a single-minimization problem by enumerating over every feasible $(z,\zeta)$ solution, and enforcing that the objective value of the bilevel problem is at least as large as objective value associated with each $(z,\zeta)$ solution. Let $M_y = |Z(y)|$. The following is a non-standard formulation of the single-level minimization problem.

\begin{singlespace}
\begin{subequations}
\label{tlf3}
\footnotesize\begin{align}
    \min_{y, \gamma, \pi,\theta} ~~~ & \eta \\
    \text{s.t.} \hspace{.050cm } & \eta \ge\sum_{t = 1}^{\tau} [\sum_{i \in N \setminus \{\alpha,\omega\}} u_i \theta^k_{it} + \sum_{(i,j) \in A \cup A^{R,out} \cup A^{R,in}} u_{ij} \theta^k_{ijt} + \sum_{(i,j) \in B^R} \tilde{u_j}  \zeta^{out,k}_{\alpha jt} \theta_{\alpha jt} ] \\
    & \omit\hfill$\text{ for } k = 1, \ldots, M_y$\nonumber\\  
    & \pi_{jt}^{+^k} + \theta^k_{\alpha jt} \ge 1 \\
    & \omit\hfill$\text{ for } (\alpha,j) \in A, k = 1, \ldots, M_y, t = 1, \ldots, \tau$\nonumber\\
    & \pi_{jt}^{+^k} - \pi_{it}^{-^k} + \theta^k_{ijt} \ge 0 \\
    & \omit\hfill$\text{ for } (i,j) \in A \text{ s.t. } i \ne \alpha, j \ne \omega, k = 1, \ldots, M_y, t = 1, \ldots, \tau$ \nonumber\\
    & \pi_{it}^{-^k} - \pi_{it}^{+^k} + \theta^k_{it} \ge - \gamma_{it} \\
    & \omit\hfill$\text{ for } i \in N\setminus \{\alpha,\omega\}, k = 1, \ldots, M_y, t = 1, \ldots, \tau$ \nonumber\\
    & -\pi_{it}^{-^k} + \theta^k_{i\omega t} \ge 0 \\
    & \omit\hfill$\text{ for } (i,\omega) \in A, k = 1, \ldots, M_y, t = 1, \ldots, \tau$ \nonumber\\
    & \pi_{jt}^{+^k} + \theta^k_{\alpha jt} \ge \zeta^{out,k}_{\alpha jt} \\
    & \omit\hfill$\text{ for } (\alpha,j) \in A^{R, out}, k = 1, \ldots, M_y, t = 1, \ldots, \tau$ \nonumber\\
    & \pi_{jt}^{+^k} - \pi_{it}^{-^k} + \theta^k_{ijt} \ge \zeta^{out,k}_{ijt} - 1 \\
    & \omit\hfill$\text{ for } (i,j) \in  A^{R, out} \text{ s.t. } i \ne \alpha, j \ne \omega, k = 1, \ldots, M_y, t = 1, \ldots, \tau$ \nonumber\\
    & \pi_{jt}^{+^k} - \pi_{it}^{-^k} + \theta^k_{ijt} \ge \zeta^{in,k}_{ijt} - 1 \\
    & \omit\hfill$\text{ for } (i,j) \in  A^{R, in} \text{ s.t. } i \ne \alpha, j \ne \omega, k = 1, \ldots, M_y, t = 1, \ldots, \tau$ \nonumber\\
    & \theta \ge 0 \\
    & y \in \{0,1\}^{|N|\setminus \{\alpha,\omega\}} \\
    & \gamma \in \{0,1\}^{|N|\setminus \{\alpha,\omega\} \times \tau} \\
    & \text{Constraints } \eqref{con:intBudgetInModel} - \eqref{con:intTimeInModel} \nonumber
\end{align}
\end{subequations}
\end{singlespace}

To return to a standard formulation, we now include our partial information constraints and variables. The include of these constraints and variables enforce that, for a given $y$ and $(z, \zeta) \notin Z(y)$, we identify another solution $(\bar{z}, \bar{\zeta}) \in Z(y)$ that allows for nonzero components of $(z, \zeta)$ that are feasible with respect to $y$ will remain nonzero. These constraints are (57)-(66). Let $M = \bigcup_{y \in Y} Z(y)$. The following is a standard formulation of the single-level problem with partial information constraints.

\begin{singlespace}
\begin{subequations}
\label{tlf4}
\footnotesize\begin{align}
    \min_{y, \gamma, w \pi,\theta} ~~~ & \eta \nonumber \\
    \text{s.t.} \hspace{.050cm } & \eta \ge \sum_{t = 1}^{\tau} [\sum_{i \in N \setminus \{\alpha,\omega\}} u_i \theta^k_{it} + \sum_{(i,j) \in A \cup A^{R,out} \cup A^{R,in}} u_{ij} \theta^k_{ijt} + \sum_{(i,j) \in B^R} \tilde{u_j} z^{out,k}_{\alpha jt} w^{out,k}_{\alpha jt} \theta^k_{\alpha jt} ] \\
    & \omit\hfill$\text{ for } k = 1, \ldots,M$\nonumber\\  
    & \pi_{jt}^{+^k} + \theta^k_{\alpha jt} \ge 1 \\
    & \omit\hfill$\text{ for } (\alpha,j) \in A, k = 1, \ldots, M, t = 1, \ldots, \tau$ \nonumber\\
    & \pi_{jt}^{+^k} - \pi_{it}^{-^k} + \theta^k_{ijt} \ge 0 \\
    & \omit\hfill$\text{ for } (i,j) \in A \text{ s.t. } i \ne \alpha, j \ne \omega, k = 1, \ldots, M, t = 1, \ldots, \tau$ \nonumber\\
    & \pi_{it}^{-^k} - \pi_{it}^{+^k} + \theta^k_{it} \ge - \gamma_{it} \\
    & \omit\hfill$\text{ for } i \in N\setminus \{\alpha,\omega\},k = 1, \ldots, M,  t = 1, \ldots, \tau$ \nonumber \\
    & -\pi_{it}^{-^k} + \theta^k_{i\omega t} \ge 0 \\
    & \omit\hfill$\text{ for } (i,\omega) \in A, k = 1, \ldots, M, t = 1, \ldots, \tau$ \nonumber\\
    & \pi_{jt}^{+^k} + \theta^k_{\alpha jt} \ge w^{out,k}_{\alpha jt} + z^{out,k}_{\alpha j} - 1 \\
    & \omit\hfill$\text{ for } (\alpha,j) \in A^{R, out}, k = 1, \ldots, M, t = 1, \ldots, \tau$ \nonumber\\
    & \pi_{jt}^{+^k} - \pi_{it}^{-^k} + \theta^k_{ijt} \ge w^{out,k}_{ijt} + z^{out,k}_{ij} - 2 \\
    & \omit\hfill$\text{ for } (i,j) \in  A^{R, out} \text{ s.t. } i \ne \alpha, j \ne \omega, k = 1, \ldots, M, t = 1, \ldots, \tau$ \nonumber\\
    & \pi_{jt}^{+^k} - \pi_{it}^{-^k} + \theta^k_{ijt} \ge w^{in,k}_{ijt} + z^{in,k}_{ij} - 2 \\
    & \omit\hfill$\text{ for } (i,j) \in  A^{R, in} \text{ s.t. } i \ne \alpha, j \ne \omega, k = 1, \ldots, M, t = 1, \ldots, \tau$ \nonumber\\
    & \theta^k \ge 0 \\
    & \omit\hfill$\text{ for } k = 1, \ldots,M$ \nonumber\\
    & \mu^{out}_i \lambda^{out,k}_{it} + \sum_{(i,j) \in A^{R,out}: z^{out,k}_{ij}=1} w^{out,k}_{ij(t+\delta^{z, out}_{ij})} \ge \sum_{(i,h) \in A} \gamma_{ht} \\
    & \omit\hfill$\text{ for } k \in 1, \ldots,M, i \in T, t \in 1, \ldots, \tau-\delta^{max,out}_i $\nonumber\\
    & \mu^{in}_j \lambda^{in,k}_{jt} + \sum_{(i,j) \in A^{R,in}:  z^{in,k}_{ij}=1} w^{in,k}_{ij(t+ \delta^{z,in}_{ij})} \ge \sum_{(h,j) \in A} \gamma_{ht} \\
    & \omit\hfill$\text{ for } k \in 1, \ldots,M, j \in V, t \in 1, \ldots, \tau-\delta^{max,in}_i$ \nonumber\\
    & \lambda^{out,k}_{it} \le \frac{\sum_{(i,j) \in A^{R,out}:  z^{out,k}_{ij}=1} w^{out,k}_{ij(t+\delta^{z,out}_{ij})}}{\sum_{(i,j) \in A^{R,out}:  z^{out,k}_{ij}=1} z^{out,k}_{ij}} \\
    & \omit\hfill$\text{ for } k \in 1, \ldots,M, i \in T, t = 1, \ldots, \tau $ \nonumber\\
    & \lambda^{in,k}_{it} \le \frac{\sum_{(i,j) \in A^{R,in}: z^{in,k}_{ij}=1} w^{in,k}_{ij(t+\delta^{z,in}_{ij})}}{\sum_{(i,j) \in A^{R,in}:  z^{in,k}_{ij}=1} z^{in,k}_{ij}} \\
    & \omit\hfill$\text{ for } k = 1, \ldots,M, i \in V, t = 1, \ldots, \tau$ \nonumber \\
    & w^{out,k}_{ij} \ge w^{out,k}_{ij(t-1)} \\
    & \omit\hfill$\text{ for } k = 1, \ldots,M, (i,j) \in A^{R,out}, t \in 2 \ldots, \tau $ \nonumber\\
    & w^{in,k}_{ijt} \ge w^{in,k}_{ij(t-1)} \\
    & \omit\hfill$\text{ for } k = 1, \ldots,M, (i,j) \in A^{R,in}, t \in 2 \ldots, \tau$ \nonumber\\
    & w^{out,k}_{\alpha j t} \ge y_{i} \\
    & \omit\hfill$\text{ for } k = 1,\ldots,M,  (j,i) \in T^R, t \in \delta^y_{i} + \delta^{z,out}_{\alpha j}, \ldots, \tau$ \nonumber\\
    & w^{out,k}_{\alpha j (t + \delta^{z,out}_{\alpha j})} \ge y_i \\
    & \omit\hfill$\text{ for } k = 1, \ldots,M, (j,i) \in B^R \text{ s.t. } z^{out,k}_{\alpha j}=1, t \in \delta^y_i+1, \ldots, \tau - \delta^{z,out}_{\alpha j} $ \nonumber\\
    & w^{out,k}_{ijt} \ge z^{out,k}_{kij} \\
    & \omit\hfill$\text{ for } k = 1, \ldots,M, i \in B, j \in V \text{ s.t. } (i,j) \in A^{R,out}$ \nonumber\\
    & w^{out,k}_{ijt} \ge z^{out,k}_{ij} \\
    & \omit\hfill$\text{ for } k = 1, \ldots,M, i \in V, j \in V \text{ s.t. } (i,j) \in A^{R,out}, \exists l \in B, (i,l) \in B^R $\nonumber\\
    & y \in \{0,1\}^{|N|\setminus \{\alpha,\omega\}} \\
    & \gamma \in \{0,1\}^{|N|\setminus \{\alpha,\omega\} \times \tau} \\
    & \text{Constraints } \eqref{con:intBudgetInModel} - \eqref{con:intTimeInModel} \nonumber
\end{align}
\end{subequations}
\end{singlespace}

In general, $M$ will be very large, making the problem computationally difficult to solve. We can instead optimize over a subset of points $\{z^1, \zeta^1, \ldots, z^n, \zeta^n\} \subset \bigcup_{y \in Y} Z(y)$, where $n < M$ to identify a lower bound on the true objective value of the bilevel problem. We then iteratively identify new points $(\tilde{z},\tilde{\zeta})$ to include in the set we optimize over, until the true solution is identified. This is the minimization problem outlined in Section \ref{sec:modelderiv}.

\newpage
\section{Computational Results}
\label{app:fullresults}
\begin{table}[h]
\begin{center}
{\scriptsize %
\begin{tabular}{|c|c|c|c|c|c|}
\hline
Budget & Data1 & Data2 & Data3 & Data4 & Data5 \\ \hline
Base & 161 & 189 & 210 & 210 & 154 \\ \hline
8 & 122 & 164 & 185 & 181 & 130 \\ \hline
12 & 116 & 155 & 177 & 172 & 118 \\ \hline
16 & 105 & 146 & 167 & 160 & 114 \\ \hline
20 & 98 & 138 & 155 & 150 & 102 \\ \hline
24 & 86 & 128 & 149 & 143 & 94 \\ \hline
28 & 81 & 122 & 139 & 135 & 89 \\ \hline
32 & 77 & 111 & 129 & 125 & 82 \\ \hline
36 & 70 & 104 & 120 & 114 & 74 \\ \hline
40 & 62 & 96 & 109 & 107 & 69 \\ \hline
\end{tabular}
}%
\caption{MP-MFNIP-R flow with delayed interdiction and 1 attacker}
\label{tbl:flow_d_1int}
\end{center}
\end{table}

\begin{table}[h]
\begin{center}
{\scriptsize %
\begin{tabular}{|c|c|c|c|c|c|}
\hline
Budget & Data1 & Data2 & Data3 & Data4 & Data5 \\ \hline
Base & 161 & 189 & 210 & 210 & 154 \\ \hline
8 & 109 & 146 & 167 & 160 & 116 \\ \hline
12 & 98 & 138 & 155 & 152 & 104 \\ \hline
16 & 86 & 126 & 147 & 140 & 94 \\ \hline
20 & 79 & 116 & 135 & 134 & 82 \\ \hline
24 & 68 & 108* & 125 & 129* & 74 \\ \hline
28 & 61 & 97 & 116 & 114 & 64 \\ \hline
32 & 54 & 88 & 105 & 106 & 52 \\ \hline
36 & 46 & 74 & 94 & 94 & 46 \\ \hline
40 & 38 & 66 & 82 & 87* & 41 \\ \hline
\end{tabular}
}%
\caption{MP-MFNIP-R flow with delayed interdiction and 2 attackers}
\label{tbl:flow_d_2int}
\end{center}
\end{table}

\begin{table}[h]
\begin{center}
{\scriptsize %
\begin{tabular}{|c|c|c|c|c|c|}
\hline
Budget & Data1 & Data2 & Data3 & Data4 & Data5 \\ \hline
Base & 161 & 189 & 210 & 210 & 154 \\ \hline
8 & 119 & 161 & 185 & 183 & 126 \\ \hline
12 & 110 & 153 & 173 & 167 & 112 \\ \hline
16 & 97 & 139 & 159 & 154 & 108 \\ \hline
20 & 89 & 130 & 147 & 140 & 94 \\ \hline
24 & 77 & 119 & 140 & 133 & 84 \\ \hline
28 & 70 & 110 & 129 & 121 & 77 \\ \hline
32 & 65 & 98 & 117 & 110 & 71 \\ \hline
36 & 54 & 88 & 106 & 98 & 62 \\ \hline
40 & 47 & 80 & 94 & 91 & 54 \\ \hline
\end{tabular}
}%
\caption{MP-MFNIP-R flow with upfront interdiction and 1 attacker}
\label{tbl:flow_u_1int}
\end{center}
\end{table}

\begin{table}[h]
\begin{center}
{\scriptsize %
\begin{tabular}{|c|c|c|c|c|c|}
\hline
Budget & Data1 & Data2 & Data3 & Data4 & Data5 \\ \hline
Base & 161 & 189 & 210 & 210 & 154 \\ \hline
8 & 104 & 140 & 161 & 154 & 110 \\ \hline
12 & 91 & 131 & 147 & 145 & 96 \\ \hline
16 & 77 & 117 & 138 & 131 & 84 \\ \hline
20 & 69 & 105 & 124 & 124 & 70 \\ \hline
24 & 56 & 96 & 112 & 112 & 61 \\ \hline
28 & 48 & 83 & 102 & 98 & 49 \\ \hline
32 & 40 & 70 & 89 & 89 & 35 \\ \hline
36 & 31 & 56 & 76 & 75 & 27 \\ \hline
40 & 21 & 47 & 62 & 68 & 20 \\ \hline
\end{tabular}
}%
\caption{MP-MFNIP-R flow with upfront interdiction and 2 attackers}
\label{tbl:flow_u_2int}
\end{center}
\end{table}

\begin{table}[h]
\begin{center}
{\scriptsize %
\begin{tabular}{|c|c|c|c|c|c|c|}
\hline
Budget & \begin{tabular}[c]{@{}c@{}}MP-MFNIP\\ Int Trafficker\end{tabular} & \begin{tabular}[c]{@{}c@{}}MP-MFNIP\\ Int Bottom\end{tabular} & \begin{tabular}[c]{@{}c@{}}MP-MFNIP\\ Int Victim\end{tabular} & \begin{tabular}[c]{@{}c@{}}MP-MFNIP-R\\ Int Trafficker\end{tabular} & \begin{tabular}[c]{@{}c@{}}MP-MFNIP-R\\ Int Bottom\end{tabular} & \begin{tabular}[c]{@{}c@{}}MP-MFNIP-R\\ Int Victim\end{tabular} \\ \hline
8 & 0 & 1 & 2 & 0 & 1 & 2 \\ \hline
12 & 0 & 1 & 4 & 0 & 2 & 2 \\ \hline
16 & 0 & 2 & 4 & 1 & 0 & 6 \\ \hline
20 & 0 & 1 & 8 & 1 & 0 & 8 \\ \hline
24 & 0 & 1 & 10 & 1 & 0 & 10 \\ \hline
28 & 0 & 2 & 10 & 1 & 1 & 10 \\ \hline
32 & 0 & 1 & 14 & 0 & 0 & 16 \\ \hline
36 & 0 & 2 & 14 & 0 & 0 & 18 \\ \hline
40 & 0 & 2 & 16 & 0 & 0 & 20 \\ \hline
\end{tabular}
}%
\caption{Recommended delayed interdictions on network 1 with one attacker}
\label{tbl:net1_d_1int_who}
\end{center}
\end{table}

\begin{table}[h]
\begin{center}
{\scriptsize %
\begin{tabular}{|c|c|c|c|c|c|c|}
\hline
Budget & \begin{tabular}[c]{@{}c@{}}MP-MFNIP\\ Int Trafficker\end{tabular} & \begin{tabular}[c]{@{}c@{}}MP-MFNIP\\ Int Bottom\end{tabular} & \begin{tabular}[c]{@{}c@{}}MP-MFNIP\\ Int Victim\end{tabular} & \begin{tabular}[c]{@{}c@{}}MP-MFNIP-R\\ Int Trafficker\end{tabular} & \begin{tabular}[c]{@{}c@{}}MP-MFNIP-R\\ Int Bottom\end{tabular} & \begin{tabular}[c]{@{}c@{}}MP-MFNIP-R\\ Int Victim\end{tabular} \\ \hline
8 & 0 & 2 & 0 & 0 & 1 & 2 \\ \hline
12 & 0 & 2 & 2 & 0 & 2 & 2 \\ \hline
16 & 0 & 3 & 2 & 1 & 0 & 6 \\ \hline
20 & 0 & 2 & 6 & 1 & 0 & 8 \\ \hline
24 & 0 & 2 & 8 & 1 & 0 & 10 \\ \hline
28 & 0 & 2 & 10 & 2 & 0 & 10 \\ \hline
32 & 0 & 2 & 12 & 2 & 0 & 12 \\ \hline
36 & 0 & 3 & 12 & 0 & 0 & 18 \\ \hline
40 & 1 & 2 & 14 & 1 & 0 & 18 \\ \hline
\end{tabular}
}%
\caption{Recommended delayed interdictions on network 2 with one attacker}
\label{tbl:net2_d_1int_who}
\end{center}
\end{table}

\begin{table}[h]
\begin{center}
{\scriptsize %
\begin{tabular}{|c|c|c|c|c|c|c|}
\hline
Budget & \begin{tabular}[c]{@{}c@{}}MP-MFNIP\\ Int Trafficker\end{tabular} & \begin{tabular}[c]{@{}c@{}}MP-MFNIP\\ Int Bottom\end{tabular} & \begin{tabular}[c]{@{}c@{}}MP-MFNIP\\ Int Victim\end{tabular} & \begin{tabular}[c]{@{}c@{}}MP-MFNIP-R\\ Int Trafficker\end{tabular} & \begin{tabular}[c]{@{}c@{}}MP-MFNIP-R\\ Int Bottom\end{tabular} & \begin{tabular}[c]{@{}c@{}}MP-MFNIP-R\\ Int Victim\end{tabular} \\ \hline
8 & 0 & 2 & 0 & 0 & 2 & 0 \\ \hline
12 & 0 & 3 & 0 & 0 & 0 & 6 \\ \hline
16 & 0 & 2 & 4 & 1 & 0 & 6 \\ \hline
20 & 0 & 2 & 6 & 1 & 0 & 8 \\ \hline
24 & 1 & 2 & 6 & 0 & 0 & 12 \\ \hline
28 & 2 & 2 & 5 & 1 & 0 & 12 \\ \hline
32 & 2 & 2 & 7 & 1 & 0 & 14 \\ \hline
36 & 2 & 2 & 9 & 1 & 0 & 16 \\ \hline
40 & 1 & 2 & 14 & 1 & 0 & 18 \\ \hline
\end{tabular}
}%
\caption{Recommended delayed interdictions on network 3 with one attacker}
\label{tbl:net3_d_1int_who}
\end{center}
\end{table}

\begin{table}[h]
\begin{center}
{\scriptsize %
\begin{tabular}{|c|c|c|c|c|c|c|}
\hline
Budget & \begin{tabular}[c]{@{}c@{}}MP-MFNIP\\ Int Trafficker\end{tabular} & \begin{tabular}[c]{@{}c@{}}MP-MFNIP\\ Int Bottom\end{tabular} & \begin{tabular}[c]{@{}c@{}}MP-MFNIP\\ Int Victim\end{tabular} & \begin{tabular}[c]{@{}c@{}}MP-MFNIP-R\\ Int Trafficker\end{tabular} & \begin{tabular}[c]{@{}c@{}}MP-MFNIP-R\\ Int Bottom\end{tabular} & \begin{tabular}[c]{@{}c@{}}MP-MFNIP-R\\ Int Victim\end{tabular} \\ \hline
8 & 0 & 2 & 0 & 0 & 2 & 0 \\ \hline
12 & 1 & 1 & 2 & 0 & 0 & 6 \\ \hline
16 & 1 & 1 & 4 & 1 & 0 & 6 \\ \hline
20 & 2 & 2 & 1 & 1 & 0 & 8 \\ \hline
24 & 2 & 3 & 1 & 1 & 0 & 10 \\ \hline
28 & 1 & 1 & 10 & 2 & 1 & 8 \\ \hline
32 & 2 & 3 & 5 & 2 & 1 & 10 \\ \hline
36 & 2 & 2 & 9 & 2 & 0 & 14 \\ \hline
40 & 2 & 3 & 9 & 2 & 0 & 16 \\ \hline
\end{tabular}
}%
\caption{Recommended delayed interdictions on network 4 with one attacker}
\label{tbl:net4_d_1int_who}
\end{center}
\end{table}

\begin{table}[h]
\begin{center}
{\scriptsize %
\begin{tabular}{|c|c|c|c|c|c|c|}
\hline
Budget & \begin{tabular}[c]{@{}c@{}}MP-MFNIP\\ Int Trafficker\end{tabular} & \begin{tabular}[c]{@{}c@{}}MP-MFNIP\\ Int Bottom\end{tabular} & \begin{tabular}[c]{@{}c@{}}MP-MFNIP\\ Int Victim\end{tabular} & \begin{tabular}[c]{@{}c@{}}MP-MFNIP-R\\ Int Trafficker\end{tabular} & \begin{tabular}[c]{@{}c@{}}MP-MFNIP-R\\ Int Bottom\end{tabular} & \begin{tabular}[c]{@{}c@{}}MP-MFNIP-R\\ Int Victim\end{tabular} \\ \hline
8 & 0 & 1 & 2 & 0 & 0 & 4 \\ \hline
12 & 0 & 1 & 4 & 0 & 0 & 6 \\ \hline
16 & 0 & 1 & 6 & 0 & 0 & 8 \\ \hline
20 & 0 & 1 & 8 & 0 & 0 & 10 \\ \hline
24 & 0 & 1 & 10 & 1 & 0 & 10 \\ \hline
28 & 0 & 2 & 10 & 1 & 1 & 10 \\ \hline
32 & 0 & 1 & 14 & 0 & 0 & 16 \\ \hline
36 & 0 & 2 & 14 & 0 & 0 & 18 \\ \hline
40 & 0 & 3 & 14 & 1 & 1 & 16 \\ \hline
\end{tabular}
}%
\caption{Recommended delayed interdictions on network 5 with one attacker}
\label{tbl:net5_d_1int_who}
\end{center}
\end{table}

\begin{table}[h]
\begin{center}
{\scriptsize %
\begin{tabular}{|c|c|c|c|c|c|c|}
\hline
Budget & \begin{tabular}[c]{@{}c@{}}MP-MFNIP\\ Int Trafficker\end{tabular} & \begin{tabular}[c]{@{}c@{}}MP-MFNIP\\ Int Bottom\end{tabular} & \begin{tabular}[c]{@{}c@{}}MP-MFNIP\\ Int Victim\end{tabular} & \begin{tabular}[c]{@{}c@{}}MP-MFNIP-R\\ Int Trafficker\end{tabular} & \begin{tabular}[c]{@{}c@{}}MP-MFNIP-R\\ Int Bottom\end{tabular} & \begin{tabular}[c]{@{}c@{}}MP-MFNIP-R\\ Int Victim\end{tabular} \\ \hline
8 & 0 & 2 & 0 & 0 & 0 & 4 \\ \hline
12 & 1 & 1 & 1 & 1 & 0 & 4 \\ \hline
16 & 1 & 2 & 1 & 1 & 0 & 6 \\ \hline
20 & 1 & 3 & 1 & 1 & 0 & 8 \\ \hline
24 & 0 & 2 & 8 & 1 & 0 & 10 \\ \hline
28 & 0 & 2 & 10 & 1 & 1 & 10 \\ \hline
32 & 0 & 2 & 12 & 2 & 2 & 8 \\ \hline
36 & 1 & 2 & 10 & 2 & 2 & 10 \\ \hline
40 & 1 & 2 & 12 & 2 & 3 & 10 \\ \hline
\end{tabular}
}%
\caption{Recommended upfront interdictions on network 1 with one attacker}
\label{tbl:net1_u_1int_who}
\end{center}
\end{table}

\begin{table}[h]
\begin{center}
{\scriptsize %
\begin{tabular}{|c|c|c|c|c|c|c|}
\hline
Budget & \begin{tabular}[c]{@{}c@{}}MP-MFNIP\\ Int Trafficker\end{tabular} & \begin{tabular}[c]{@{}c@{}}MP-MFNIP\\ Int Bottom\end{tabular} & \begin{tabular}[c]{@{}c@{}}MP-MFNIP\\ Int Victim\end{tabular} & \begin{tabular}[c]{@{}c@{}}MP-MFNIP-R\\ Int Trafficker\end{tabular} & \begin{tabular}[c]{@{}c@{}}MP-MFNIP-R\\ Int Bottom\end{tabular} & \begin{tabular}[c]{@{}c@{}}MP-MFNIP-R\\ Int Victim\end{tabular} \\ \hline
8 & 0 & 2 & 0 & 0 & 0 & 4 \\ \hline
12 & 0 & 3 & 0 & 1 & 0 & 4 \\ \hline
16 & 0 & 4 & 0 & 1 & 0 & 6 \\ \hline
20 & 0 & 4 & 2 & 1 & 1 & 6 \\ \hline
24 & 1 & 4 & 1 & 1 & 0 & 10 \\ \hline
28 & 2 & 4 & 1 & 2 & 0 & 10 \\ \hline
32 & 2 & 4 & 3 & 2 & 0 & 12 \\ \hline
36 & 0 & 3 & 12 & 2 & 1 & 12 \\ \hline
40 & 0 & 4 & 12 & 2 & 1 & 14 \\ \hline
\end{tabular}
}%
\caption{Recommended upfront interdictions on network 2 with one attacker}
\label{tbl:net2_u_1int_who}
\end{center}
\end{table}

\begin{table}[h]
\begin{center}
{\scriptsize %
\begin{tabular}{|c|c|c|c|c|c|c|}
\hline
Budget & \begin{tabular}[c]{@{}c@{}}MP-MFNIP\\ Int Trafficker\end{tabular} & \begin{tabular}[c]{@{}c@{}}MP-MFNIP\\ Int Bottom\end{tabular} & \begin{tabular}[c]{@{}c@{}}MP-MFNIP\\ Int Victim\end{tabular} & \begin{tabular}[c]{@{}c@{}}MP-MFNIP-R\\ Int Trafficker\end{tabular} & \begin{tabular}[c]{@{}c@{}}MP-MFNIP-R\\ Int Bottom\end{tabular} & \begin{tabular}[c]{@{}c@{}}MP-MFNIP-R\\ Int Victim\end{tabular} \\ \hline
8 & 0 & 2 & 0 & 0 & 0 & 4 \\ \hline
12 & 0 & 3 & 0 & 1 & 1 & 2 \\ \hline
16 & 0 & 2 & 4 & 1 & 0 & 6 \\ \hline
20 & 1 & 2 & 3 & 1 & 0 & 8 \\ \hline
24 & 1 & 2 & 5 & 1 & 0 & 10 \\ \hline
28 & 2 & 2 & 5 & 1 & 0 & 12 \\ \hline
32 & 2 & 3 & 5 & 1 & 0 & 14 \\ \hline
36 & 2 & 2 & 9 & 1 & 0 & 16 \\ \hline
40 & 2 & 3 & 9 & 1 & 0 & 18 \\ \hline
\end{tabular}
}%
\caption{Recommended upfront interdictions on network 3 with one attacker}
\label{tbl:net3_u_1int_who}
\end{center}
\end{table}

\begin{table}[h]
\begin{center}
{\scriptsize %
\begin{tabular}{|c|c|c|c|c|c|c|}
\hline
Budget & \begin{tabular}[c]{@{}c@{}}MP-MFNIP\\ Int Trafficker\end{tabular} & \begin{tabular}[c]{@{}c@{}}MP-MFNIP\\ Int Bottom\end{tabular} & \begin{tabular}[c]{@{}c@{}}MP-MFNIP\\ Int Victim\end{tabular} & \begin{tabular}[c]{@{}c@{}}MP-MFNIP-R\\ Int Trafficker\end{tabular} & \begin{tabular}[c]{@{}c@{}}MP-MFNIP-R\\ Int Bottom\end{tabular} & \begin{tabular}[c]{@{}c@{}}MP-MFNIP-R\\ Int Victim\end{tabular} \\ \hline
8 & 0 & 2 & 0 & 0 & 0 & 4 \\ \hline
12 & 1 & 1 & 2 & 1 & 0 & 4 \\ \hline
16 & 1 & 2 & 1 & 1 & 0 & 6 \\ \hline
20 & 2 & 2 & 1 & 1 & 0 & 8 \\ \hline
24 & 2 & 3 & 1 & 1 & 1 & 8 \\ \hline
28 & 2 & 3 & 3 & 2 & 1 & 8 \\ \hline
32 & 2 & 3 & 5 & 2 & 1 & 10 \\ \hline
36 & 2 & 3 & 7 & 2 & 0 & 14 \\ \hline
40 & 2 & 3 & 9 & 2 & 1 & 14 \\ \hline
\end{tabular}
}%
\caption{Recommended upfront interdictions on network 4 with one attacker}
\label{tbl:net4_u_1int_who}
\end{center}
\end{table}

\begin{table}[h]
\begin{center}
{\scriptsize %
\begin{tabular}{|c|c|c|c|c|c|c|}
\hline
Budget & \begin{tabular}[c]{@{}c@{}}MP-MFNIP\\ Int Trafficker\end{tabular} & \begin{tabular}[c]{@{}c@{}}MP-MFNIP\\ Int Bottom\end{tabular} & \begin{tabular}[c]{@{}c@{}}MP-MFNIP\\ Int Victim\end{tabular} & \begin{tabular}[c]{@{}c@{}}MP-MFNIP-R\\ Int Trafficker\end{tabular} & \begin{tabular}[c]{@{}c@{}}MP-MFNIP-R\\ Int Bottom\end{tabular} & \begin{tabular}[c]{@{}c@{}}MP-MFNIP-R\\ Int Victim\end{tabular} \\ \hline
8 & 0 & 1 & 2 & 0 & 0 & 4 \\ \hline
12 & 0 & 1 & 4 & 0 & 0 & 6 \\ \hline
16 & 0 & 1 & 6 & 0 & 0 & 8 \\ \hline
20 & 0 & 1 & 8 & 0 & 0 & 10 \\ \hline
24 & 0 & 2 & 8 & 1 & 0 & 10 \\ \hline
28 & 0 & 3 & 8 & 1 & 1 & 10 \\ \hline
32 & 0 & 4 & 8 & 0 & 2 & 12 \\ \hline
36 & 0 & 3 & 12 & 2 & 1 & 12 \\ \hline
40 & 0 & 3 & 14 & 2 & 1 & 14 \\ \hline
\end{tabular}
}%
\caption{Recommended upfront interdictions on network 5 with one attacker}
\label{tbl:net5_u_1int_who}
\end{center}
\end{table}

\begin{sidewaystable}[]
\begin{center}
\resizebox{\textwidth}{!}{
\begin{tabular}{|c|c|c|c|c|c|c|c|c|c|c|}
\hline
Budget & \begin{tabular}[c]{@{}c@{}}MP-MFNIP\\ Int Trafficker\end{tabular} & \begin{tabular}[c]{@{}c@{}}MP-MFNIP\\ Int Bottom\end{tabular} & \begin{tabular}[c]{@{}c@{}}MP-MFNIP\\ Int Victim\end{tabular} & \begin{tabular}[c]{@{}c@{}}MP-MFNIP 2nd\\ Int Current Vic\end{tabular} & \begin{tabular}[c]{@{}c@{}}MP-MFNIP 2nd\\ Int Prosp Vic\end{tabular} & \begin{tabular}[c]{@{}c@{}}MP-MFNIP-R\\ Int Trafficker\end{tabular} & \begin{tabular}[c]{@{}c@{}}MP-MFNIP-R\\ Int Bottom\end{tabular} & \begin{tabular}[c]{@{}c@{}}MP-MFNIP-R\\ Int Victim\end{tabular} & \begin{tabular}[c]{@{}c@{}}MP-MFNIP-R 2nd\\ Int Current Victim\end{tabular} & \begin{tabular}[c]{@{}c@{}}MP-MFNIP-R 2nd\\ Int Prosp Victim\end{tabular} \\ \hline
8 & 0 & 1 & 2 & 3 & 1 & 0 & 0 & 4 & 2 & 3 \\ \hline
12 & 0 & 1 & 4 & 2 & 3 & 0 & 0 & 6 & 2 & 3 \\ \hline
16 & 0 & 1 & 6 & 3 & 1 & 0 & 0 & 8 & 2 & 3 \\ \hline
20 & 0 & 1 & 8 & 3 & 1 & 0 & 0 & 10 & 2 & 3 \\ \hline
24 & 0 & 2 & 12 & 2 & 3 & 0 & 0 & 12 & 2 & 4 \\ \hline
28 & 0 & 1 & 14 & 2 & 3 & 0 & 0 & 14 & 2 & 4 \\ \hline
32 & 0 & 1 & 14 & 2 & 4 & 0 & 0 & 16 & 2 & 4 \\ \hline
36 & 0 & 2 & 14 & 2 & 4 & 0 & 0 & 18 & 2 & 4 \\ \hline
40 & 0 & 3 & 14 & 2 & 4 & 0 & 0 & 20 & 0 & 8 \\ \hline
\end{tabular}}
\caption{Recommended delayed interdictions on network 1 with two attackers}
\label{tbl:net1_d_2int_who}

\vspace{2\baselineskip}
\resizebox{\textwidth}{!}{
\begin{tabular}{|c|c|c|c|c|c|c|c|c|c|c|}
\hline
Budget & \begin{tabular}[c]{@{}c@{}}MP-MFNIP\\ Int Trafficker\end{tabular} & \begin{tabular}[c]{@{}c@{}}MP-MFNIP\\ Int Bottom\end{tabular} & \begin{tabular}[c]{@{}c@{}}MP-MFNIP\\ Int Victim\end{tabular} & \begin{tabular}[c]{@{}c@{}}MP-MFNIP 2nd\\ Int Current Vic\end{tabular} & \begin{tabular}[c]{@{}c@{}}MP-MFNIP 2nd\\ Int Prosp Vic\end{tabular} & \begin{tabular}[c]{@{}c@{}}MP-MFNIP-R\\ Int Trafficker\end{tabular} & \begin{tabular}[c]{@{}c@{}}MP-MFNIP-R\\ Int Bottom\end{tabular} & \begin{tabular}[c]{@{}c@{}}MP-MFNIP-R\\ Int Victim\end{tabular} & \begin{tabular}[c]{@{}c@{}}MP-MFNIP-R 2nd\\ Int Current Victim\end{tabular} & \begin{tabular}[c]{@{}c@{}}MP-MFNIP-R 2nd\\ Int Prosp Victim\end{tabular} \\ \hline
8 & 0 & 2 & 0 & 3 & 1 & 0 & 0 & 4 & 2 & 4 \\ \hline
12 & 0 & 2 & 2 & 3 & 1 & 0 & 0 & 6 & 2 & 4 \\ \hline
16 & 0 & 2 & 4 & 3 & 1 & 0 & 0 & 8 & 2 & 4 \\ \hline
20 & 0 & 2 & 6 & 2 & 3 & 0 & 0 & 10 & 1 & 6 \\ \hline
24 & 0 & 2 & 8 & 2 & 4 & 0 & 0 & 12 & 2 & 4 \\ \hline
28 & 0 & 2 & 10 & 2 & 4 & 0 & 0 & 14 & 1 & 7 \\ \hline
32 & 0 & 3 & 10 & 2 & 4 & 0 & 0 & 16 & 1 & 6 \\ \hline
36 & 1 & 2 & 12 & 2 & 4 & 0 & 0 & 18 & 1 & 6 \\ \hline
40 & 0 & 1 & 18 & 2 & 4 & 0 & 0 & 20 & 1 & 7 \\ \hline
\end{tabular}
}
\caption{Recommended delayed interdictions on network 2 with two attackers}
\label{tbl:net2_d_2int_who}
\end{center}
\end{sidewaystable}

\begin{sidewaystable}[]
\begin{center}
\resizebox{\textwidth}{!}{
\begin{tabular}{|c|c|c|c|c|c|c|c|c|c|c|}
\hline
Budget & \begin{tabular}[c]{@{}c@{}}MP-MFNIP\\ Int Trafficker\end{tabular} & \begin{tabular}[c]{@{}c@{}}MP-MFNIP\\ Int Bottom\end{tabular} & \begin{tabular}[c]{@{}c@{}}MP-MFNIP\\ Int Victim\end{tabular} & \begin{tabular}[c]{@{}c@{}}MP-MFNIP 2nd\\ Int Current Vic\end{tabular} & \begin{tabular}[c]{@{}c@{}}MP-MFNIP 2nd\\ Int Prosp Vic\end{tabular} & \begin{tabular}[c]{@{}c@{}}MP-MFNIP-R\\ Int Trafficker\end{tabular} & \begin{tabular}[c]{@{}c@{}}MP-MFNIP-R\\ Int Bottom\end{tabular} & \begin{tabular}[c]{@{}c@{}}MP-MFNIP-R\\ Int Victim\end{tabular} & \begin{tabular}[c]{@{}c@{}}MP-MFNIP-R 2nd\\ Int Current Victim\end{tabular} & \begin{tabular}[c]{@{}c@{}}MP-MFNIP-R 2nd\\ Int Prosp Victim\end{tabular} \\ \hline
8 & 0 & 2 & 0 & 3 & 1 & 0 & 0 & 4 & 2 & 3 \\ \hline
12 & 0 & 2 & 2 & 3 & 1 & 0 & 0 & 6 & 2 & 3 \\ \hline
16 & 0 & 2 & 4 & 3 & 1 & 0 & 0 & 8 & 2 & 4 \\ \hline
20 & 0 & 2 & 6 & 3 & 1 & 0 & 0 & 10 & 2 & 4 \\ \hline
24 & 1 & 2 & 6 & 3 & 1 & 0 & 0 & 12 & 2 & 4 \\ \hline
28 & 0 & 1 & 12 & 2 & 4 & 0 & 0 & 14 & 2 & 4 \\ \hline
32 & 1 & 3 & 8 & 2 & 4 & 0 & 0 & 16 & 2 & 4 \\ \hline
36 & 2 & 3 & 7 & 2 & 4 & 0 & 0 & 18 & 1 & 7 \\ \hline
40 & 1 & 1 & 14 & 2 & 4 & 0 & 0 & 20 & 1 & 7 \\ \hline
\end{tabular}
}
\caption{Recommended delayed interdictions on network 3 with two attackers}
\label{tbl:net3_d_2int_who}

\vspace{2\baselineskip}
\resizebox{\textwidth}{!}{
\begin{tabular}{|c|c|c|c|c|c|c|c|c|c|c|}
\hline
Budget & \begin{tabular}[c]{@{}c@{}}MP-MFNIP\\ Int Trafficker\end{tabular} & \begin{tabular}[c]{@{}c@{}}MP-MFNIP\\ Int Bottom\end{tabular} & \begin{tabular}[c]{@{}c@{}}MP-MFNIP\\ Int Victim\end{tabular} & \begin{tabular}[c]{@{}c@{}}MP-MFNIP 2nd\\ Int Current Vic\end{tabular} & \begin{tabular}[c]{@{}c@{}}MP-MFNIP 2nd\\ Int Prosp Vic\end{tabular} & \begin{tabular}[c]{@{}c@{}}MP-MFNIP-R\\ Int Trafficker\end{tabular} & \begin{tabular}[c]{@{}c@{}}MP-MFNIP-R\\ Int Bottom\end{tabular} & \begin{tabular}[c]{@{}c@{}}MP-MFNIP-R\\ Int Victim\end{tabular} & \begin{tabular}[c]{@{}c@{}}MP-MFNIP-R 2nd\\ Int Current Victim\end{tabular} & \begin{tabular}[c]{@{}c@{}}MP-MFNIP-R 2nd\\ Int Prosp Victim\end{tabular} \\ \hline
8 & 0 & 1 & 2 & 3 & 1 & 0 & 0 & 4 & 2 & 3 \\ \hline
12 & 1 & 1 & 2 & 3 & 1 & 0 & 0 & 6 & 2 & 4 \\ \hline
16 & 0 & 1 & 6 & 2 & 3 & 0 & 0 & 8 & 2 & 4 \\ \hline
20 & 2 & 2 & 1 & 3 & 1 & 0 & 0 & 10 & 2 & 4 \\ \hline
24 & 2 & 2 & 3 & 3 & 0 & 0 & 0 & 12 & 1 & 7 \\ \hline
28 & 1 & 2 & 8 & 2 & 4 & 1 & 0 & 12 & 2 & 3 \\ \hline
32 & 1 & 2 & 10 & 2 & 4 & 1 & 0 & 14 & 2 & 4 \\ \hline
36 & 0 & 0 & 18 & 1 & 7 & 1 & 0 & 16 & 2 & 4 \\ \hline
40 & 0 & 1 & 18 & 1 & 7 & 1 & 0 & 18 & 2 & 4 \\ \hline
\end{tabular}
}
\caption{Recommended delayed interdictions on network 4 with two attackers}
\label{tbl:net4_d_2int_who}
\end{center}
\end{sidewaystable}

\begin{sidewaystable}[]
\begin{center}
\resizebox{\textwidth}{!}{
\begin{tabular}{|c|c|c|c|c|c|c|c|c|c|c|}
\hline
Budget & \begin{tabular}[c]{@{}c@{}}MP-MFNIP\\ Int Trafficker\end{tabular} & \begin{tabular}[c]{@{}c@{}}MP-MFNIP\\ Int Bottom\end{tabular} & \begin{tabular}[c]{@{}c@{}}MP-MFNIP\\ Int Victim\end{tabular} & \begin{tabular}[c]{@{}c@{}}MP-MFNIP 2nd\\ Int Current Vic\end{tabular} & \begin{tabular}[c]{@{}c@{}}MP-MFNIP 2nd\\ Int Prosp Vic\end{tabular} & \begin{tabular}[c]{@{}c@{}}MP-MFNIP-R\\ Int Trafficker\end{tabular} & \begin{tabular}[c]{@{}c@{}}MP-MFNIP-R\\ Int Bottom\end{tabular} & \begin{tabular}[c]{@{}c@{}}MP-MFNIP-R\\ Int Victim\end{tabular} & \begin{tabular}[c]{@{}c@{}}MP-MFNIP-R 2nd\\ Int Current Victim\end{tabular} & \begin{tabular}[c]{@{}c@{}}MP-MFNIP-R 2nd\\ Int Prosp Victim\end{tabular} \\ \hline
8 & 0 & 1 & 2 & 3 & 1 & 0 & 0 & 4 & 3 & 1 \\ \hline
12 & 0 & 1 & 4 & 3 & 1 & 0 & 0 & 6 & 3 & 1 \\ \hline
16 & 0 & 1 & 6 & 3 & 1 & 0 & 0 & 8 & 2 & 3 \\ \hline
20 & 0 & 1 & 8 & 3 & 1 & 0 & 0 & 10 & 2 & 3 \\ \hline
24 & 0 & 1 & 10 & 2 & 4 & 0 & 0 & 12 & 2 & 4 \\ \hline
28 & 0 & 1 & 12 & 2 & 4 & 0 & 0 & 14 & 1 & 7 \\ \hline
32 & 0 & 1 & 14 & 1 & 7 & 0 & 0 & 16 & 1 & 7 \\ \hline
36 & 0 & 2 & 14 & 1 & 7 & 0 & 0 & 17 & 1 & 7 \\ \hline
40 & 0 & 3 & 14 & 1 & 7 & 0 & 1 & 17 & 1 & 7 \\ \hline
\end{tabular}
}
\caption{Recommended delayed interdictions on network 5 with two attackers}
\label{tbl:net5_d_2int_who}

\vspace{2\baselineskip}
\resizebox{\textwidth}{!}{
\begin{tabular}{|c|c|c|c|c|c|c|c|c|c|c|}
\hline
Budget & \begin{tabular}[c]{@{}c@{}}MP-MFNIP\\ Int Trafficker\end{tabular} & \begin{tabular}[c]{@{}c@{}}MP-MFNIP\\ Int Bottom\end{tabular} & \begin{tabular}[c]{@{}c@{}}MP-MFNIP\\ Int Victim\end{tabular} & \begin{tabular}[c]{@{}c@{}}MP-MFNIP 2nd\\ Int Current Vic\end{tabular} & \begin{tabular}[c]{@{}c@{}}MP-MFNIP 2nd\\ Int Prosp Vic\end{tabular} & \begin{tabular}[c]{@{}c@{}}MP-MFNIP-R\\ Int Trafficker\end{tabular} & \begin{tabular}[c]{@{}c@{}}MP-MFNIP-R\\ Int Bottom\end{tabular} & \begin{tabular}[c]{@{}c@{}}MP-MFNIP-R\\ Int Victim\end{tabular} & \begin{tabular}[c]{@{}c@{}}MP-MFNIP-R 2nd\\ Int Current Victim\end{tabular} & \begin{tabular}[c]{@{}c@{}}MP-MFNIP-R 2nd\\ Int Prosp Victim\end{tabular} \\ \hline
8 & 0 & 2 & 0 & 3 & 1 & 0 & 0 & 4 & 2 & 3 \\ \hline
12 & 0 & 2 & 2 & 2 & 2 & 0 & 0 & 6 & 2 & 3 \\ \hline
16 & 0 & 2 & 4 & 2 & 3 & 0 & 0 & 8 & 2 & 3 \\ \hline
20 & 0 & 2 & 6 & 2 & 3 & 0 & 0 & 10 & 2 & 3 \\ \hline
24 & 0 & 2 & 8 & 2 & 3 & 0 & 0 & 12 & 2 & 4 \\ \hline
28 & 0 & 2 & 10 & 2 & 3 & 0 & 0 & 14 & 2 & 4 \\ \hline
32 & 0 & 3 & 10 & 2 & 3 & 0 & 0 & 16 & 2 & 4 \\ \hline
36 & 0 & 2 & 14 & 2 & 4 & 0 & 0 & 18 & 2 & 4 \\ \hline
40 & 0 & 3 & 14 & 2 & 4 & 0 & 0 & 20 & 0 & 8 \\ \hline
\end{tabular}
}
\caption{Recommended upfront interdictions on network 1 with two attackers}
\label{tbl:net1_u_2int_who}
\end{center}
\end{sidewaystable}

\begin{sidewaystable}[]
\begin{center}
\resizebox{\textwidth}{!}{
\begin{tabular}{|c|c|c|c|c|c|c|c|c|c|c|}
\hline
Budget & \begin{tabular}[c]{@{}c@{}}MP-MFNIP\\ Int Trafficker\end{tabular} & \begin{tabular}[c]{@{}c@{}}MP-MFNIP\\ Int Bottom\end{tabular} & \begin{tabular}[c]{@{}c@{}}MP-MFNIP\\ Int Victim\end{tabular} & \begin{tabular}[c]{@{}c@{}}MP-MFNIP 2nd\\ Int Current Vic\end{tabular} & \begin{tabular}[c]{@{}c@{}}MP-MFNIP 2nd\\ Int Prosp Vic\end{tabular} & \begin{tabular}[c]{@{}c@{}}MP-MFNIP-R\\ Int Trafficker\end{tabular} & \begin{tabular}[c]{@{}c@{}}MP-MFNIP-R\\ Int Bottom\end{tabular} & \begin{tabular}[c]{@{}c@{}}MP-MFNIP-R\\ Int Victim\end{tabular} & \begin{tabular}[c]{@{}c@{}}MP-MFNIP-R 2nd\\ Int Current Victim\end{tabular} & \begin{tabular}[c]{@{}c@{}}MP-MFNIP-R 2nd\\ Int Prosp Victim\end{tabular} \\ \hline
8 & 0 & 2 & 0 & 3 & 1 & 0 & 0 & 4 & 2 & 4 \\ \hline
12 & 0 & 2 & 2 & 3 & 1 & 0 & 0 & 6 & 2 & 4 \\ \hline
16 & 0 & 4 & 0 & 3 & 1 & 0 & 0 & 8 & 2 & 4 \\ \hline
20 & 0 & 3 & 4 & 2 & 3 & 0 & 0 & 10 & 1 & 5 \\ \hline
24 & 0 & 3 & 6 & 2 & 4 & 0 & 0 & 12 & 1 & 7 \\ \hline
28 & 0 & 3 & 8 & 2 & 4 & 0 & 0 & 14 & 1 & 6 \\ \hline
32 & 0 & 3 & 10 & 2 & 4 & 0 & 0 & 16 & 1 & 7 \\ \hline
36 & 0 & 3 & 12 & 1 & 5 & 0 & 0 & 18 & 1 & 6 \\ \hline
40 & 1 & 3 & 12 & 2 & 4 & 0 & 0 & 20 & 1 & 7 \\ \hline
\end{tabular}
}
\caption{Recommended upfront interdictions on network 2 with two attackers}
\label{tbl:net2_u_2int_who}

\vspace{2\baselineskip}
\resizebox{\textwidth}{!}{
\begin{tabular}{|c|c|c|c|c|c|c|c|c|c|c|}
\hline
Budget & \begin{tabular}[c]{@{}c@{}}MP-MFNIP\\ Int Trafficker\end{tabular} & \begin{tabular}[c]{@{}c@{}}MP-MFNIP\\ Int Bottom\end{tabular} & \begin{tabular}[c]{@{}c@{}}MP-MFNIP\\ Int Victim\end{tabular} & \begin{tabular}[c]{@{}c@{}}MP-MFNIP 2nd\\ Int Current Vic\end{tabular} & \begin{tabular}[c]{@{}c@{}}MP-MFNIP 2nd\\ Int Prosp Vic\end{tabular} & \begin{tabular}[c]{@{}c@{}}MP-MFNIP-R\\ Int Trafficker\end{tabular} & \begin{tabular}[c]{@{}c@{}}MP-MFNIP-R\\ Int Bottom\end{tabular} & \begin{tabular}[c]{@{}c@{}}MP-MFNIP-R\\ Int Victim\end{tabular} & \begin{tabular}[c]{@{}c@{}}MP-MFNIP-R 2nd\\ Int Current Victim\end{tabular} & \begin{tabular}[c]{@{}c@{}}MP-MFNIP-R 2nd\\ Int Prosp Victim\end{tabular} \\ \hline
8 & 0 & 2 & 0 & 3 & 1 & 0 & 0 & 4 & 2 & 3 \\ \hline
12 & 0 & 2 & 2 & 3 & 1 & 0 & 0 & 6 & 2 & 3 \\ \hline
16 & 1 & 2 & 1 & 3 & 1 & 0 & 0 & 8 & 2 & 4 \\ \hline
20 & 2 & 2 & 1 & 3 & 1 & 0 & 0 & 10 & 2 & 4 \\ \hline
24 & 1 & 2 & 5 & 3 & 1 & 0 & 0 & 12 & 2 & 4 \\ \hline
28 & 2 & 2 & 5 & 3 & 1 & 0 & 0 & 14 & 2 & 4 \\ \hline
32 & 1 & 3 & 7 & 2 & 4 & 0 & 0 & 16 & 2 & 4 \\ \hline
36 & 2 & 3 & 7 & 2 & 4 & 0 & 0 & 18 & 1 & 7 \\ \hline
40 & 2 & 3 & 9 & 1 & 5 & 0 & 0 & 20 & 1 & 7 \\ \hline
\end{tabular}
}
\caption{Recommended upfront interdictions on network 3 with two attackers}
\label{tbl:net3_u_2int_who}
\end{center}
\end{sidewaystable}

\begin{sidewaystable}[]
\begin{center}
\resizebox{\textwidth}{!}{
\begin{tabular}{|c|c|c|c|c|c|c|c|c|c|c|}
\hline
Budget & \begin{tabular}[c]{@{}c@{}}MP-MFNIP\\ Int Trafficker\end{tabular} & \begin{tabular}[c]{@{}c@{}}MP-MFNIP\\ Int Bottom\end{tabular} & \begin{tabular}[c]{@{}c@{}}MP-MFNIP\\ Int Victim\end{tabular} & \begin{tabular}[c]{@{}c@{}}MP-MFNIP 2nd\\ Int Current Vic\end{tabular} & \begin{tabular}[c]{@{}c@{}}MP-MFNIP 2nd\\ Int Prosp Vic\end{tabular} & \begin{tabular}[c]{@{}c@{}}MP-MFNIP-R\\ Int Trafficker\end{tabular} & \begin{tabular}[c]{@{}c@{}}MP-MFNIP-R\\ Int Bottom\end{tabular} & \begin{tabular}[c]{@{}c@{}}MP-MFNIP-R\\ Int Victim\end{tabular} & \begin{tabular}[c]{@{}c@{}}MP-MFNIP-R 2nd\\ Int Current Victim\end{tabular} & \begin{tabular}[c]{@{}c@{}}MP-MFNIP-R 2nd\\ Int Prosp Victim\end{tabular} \\ \hline
8 & 0 & 2 & 0 & 3 & 1 & 0 & 0 & 4 & 2 & 4 \\ \hline
12 & 1 & 1 & 1 & 3 & 1 & 0 & 0 & 6 & 2 & 4 \\ \hline
16 & 1 & 2 & 2 & 3 & 1 & 0 & 0 & 8 & 2 & 4 \\ \hline
20 & 2 & 2 & 1 & 3 & 1 & 0 & 0 & 10 & 2 & 4 \\ \hline
24 & 2 & 3 & 1 & 2 & 2 & 1 & 0 & 10 & 2 & 3 \\ \hline
28 & 2 & 3 & 3 & 2 & 3 & 1 & 0 & 12 & 2 & 3 \\ \hline
32 & 2 & 3 & 5 & 2 & 4 & 1 & 0 & 14 & 2 & 4 \\ \hline
36 & 2 & 3 & 7 & 2 & 4 & 1 & 0 & 16 & 2 & 4 \\ \hline
40 & 2 & 3 & 9 & 2 & 4 & 0 & 0 & 20 & 0 & 10 \\ \hline
\end{tabular}
}
\caption{Recommended upfront interdictions on network 4 with two attackers}
\label{tbl:net4_u_2int_who}

\vspace{2\baselineskip}
\resizebox{\textwidth}{!}{
\begin{tabular}{|c|c|c|c|c|c|c|c|c|c|c|}
\hline
Budget & \begin{tabular}[c]{@{}c@{}}MP-MFNIP\\ Int Trafficker\end{tabular} & \begin{tabular}[c]{@{}c@{}}MP-MFNIP\\ Int Bottom\end{tabular} & \begin{tabular}[c]{@{}c@{}}MP-MFNIP\\ Int Victim\end{tabular} & \begin{tabular}[c]{@{}c@{}}MP-MFNIP 2nd\\ Int Current Vic\end{tabular} & \begin{tabular}[c]{@{}c@{}}MP-MFNIP 2nd\\ Int Prosp Vic\end{tabular} & \begin{tabular}[c]{@{}c@{}}MP-MFNIP-R\\ Int Trafficker\end{tabular} & \begin{tabular}[c]{@{}c@{}}MP-MFNIP-R\\ Int Bottom\end{tabular} & \begin{tabular}[c]{@{}c@{}}MP-MFNIP-R\\ Int Victim\end{tabular} & \begin{tabular}[c]{@{}c@{}}MP-MFNIP-R 2nd\\ Int Current Victim\end{tabular} & \begin{tabular}[c]{@{}c@{}}MP-MFNIP-R 2nd\\ Int Prosp Victim\end{tabular} \\ \hline
8 & 0 & 1 & 2 & 3 & 1 & 0 & 0 & 4 & 3 & 1 \\ \hline
12 & 0 & 1 & 4 & 3 & 1 & 0 & 0 & 6 & 3 & 1 \\ \hline
16 & 0 & 2 & 4 & 3 & 1 & 0 & 0 & 8 & 2 & 3 \\ \hline
20 & 0 & 2 & 6 & 2 & 2 & 0 & 0 & 10 & 2 & 4 \\ \hline
24 & 0 & 3 & 6 & 2 & 3 & 0 & 0 & 12 & 2 & 4 \\ \hline
28 & 0 & 3 & 8 & 2 & 4 & 0 & 0 & 14 & 1 & 5 \\ \hline
32 & 0 & 3 & 10 & 1 & 7 & 0 & 0 & 16 & 1 & 7 \\ \hline
36 & 0 & 3 & 12 & 1 & 7 & 0 & 1 & 16 & 1 & 7 \\ \hline
40 & 0 & 4 & 12 & 1 & 7 & 0 & 2 & 16 & 1 & 7 \\ \hline
\end{tabular}
}
\caption{Recommended upfront interdictions on network 5 with two attackers}
\label{tbl:net5_u_2int_who}
\end{center}
\end{sidewaystable}

\end{appendices}
\end{document}